\documentclass[12pt]{amsart}
\usepackage{amssymb,amscd,amsmath}
\headheight=7pt
\textheight=574pt
\textwidth=432pt
\topmargin=14pt
\oddsidemargin=18pt
\evensidemargin=18pt

\newcommand{\cA}{{\mathcal{A}}}
\newcommand{\cB}{{\mathcal{B}}}

\newcommand{\cG}{{\mathcal{G}}}

\newcommand{\cGD}{{\mathcal{G}}^{*}}

\newcommand{\R}{{\mathbb{R}}}
\newcommand{\F}{{\mathbb{F}}}
\newcommand{\NN}{{\mathbb{N}}}
\newcommand{\Z}{{\mathbb{Z}}}

\newcommand{\AAA}{\mathsf{A}}
\newcommand{\SSS}{\mathsf{S}}

\newcommand{\fp}{{\mathfrak{n}}}
\newcommand{\fps}{{\mathfrak{n}}^{*}}

\newcommand{\al}{\alpha}
\newcommand{\gam}{\gamma}
\newcommand{\e}{\epsilon}
\newcommand{\la}{\lambda}
\newcommand{\varep}{\varepsilon}

\newcommand{\Ind}{{\operatorname{Ind}}}
\newcommand{\Res}{{\operatorname{Res}}}
\newcommand{\Irr}{{\operatorname{Irr}}}
\newcommand{\diag}{{\operatorname{diag}}}
\newcommand{\rk}{{\operatorname{rk}}}

\newcommand{\GL}{{\operatorname{GL}}}
\newcommand{\SL}{{\operatorname{SL}}}
\newcommand{\PSL}{{\operatorname{PSL}}}
\newcommand{\PGL}{{\operatorname{PGL}}}
\newcommand{\GU}{{\operatorname{GU}}}
\newcommand{\PGU}{{\operatorname{PGU}}}
\newcommand{\SU}{{\operatorname{SU}}}
\newcommand{\PSU}{{\operatorname{PSU}}}
\newcommand{\Sp}{{\operatorname{Sp}}}
\newcommand{\PCSp}{{\operatorname{PCSp}}}
\newcommand{\PSp}{{\operatorname{PSp}}}
\newcommand{\CSp}{{\operatorname{CSp}}}
\newcommand{\HS}{{\operatorname{HS}}}
\newcommand{\Spin}{{\operatorname{Spin}}}
\newcommand{\POm}{{\operatorname{P\Omega}}}
\newcommand{\CO}{{\operatorname{CO}}}
\newcommand{\SO}{{\operatorname{SO}}}
\newcommand{\PCO}{{\operatorname{PCO}}}
\newcommand{\PSO}{{\operatorname{PSO}}}
\newcommand{\GO}{{\operatorname{GO}}}
\newcommand{\St}{{\operatorname{St}}}
\newcommand{\rank}{{\operatorname{rank}}}

\newcommand{\FD}{F^*}
\newcommand{\hT}{\hat{T}}
\newcommand{\hs}{\hat{s}}
\newcommand{\bF}{\bar{\F}}

\renewcommand{\bar}{\overline}

\newcommand{\uc}{{\underline c}}
\newcommand{\uh}{{\underline h}}
\newcommand{\uk}{{\underline k}}
\newcommand{\ul}{{\underline l}}

\newcommand{\tw}[1]{{}^#1\!}

\newtheorem{thm}{Theorem}[section]
\newtheorem{cor}[thm]{Corollary}
\newtheorem{prop}[thm]{Proposition}
\newtheorem{exmp}[thm]{Example}
\newtheorem{lem}[thm]{Lemma}
\theoremstyle{remark}

\theoremstyle{definition}

\begin{document}
\title[The Largest Irreducible Representations of Simple Groups]
{The Largest Irreducible Representations\\ of Simple Groups}
\author{Michael Larsen}
\address{Department of Mathematics\\
    Indiana University \\
    Bloomington, IN 47405\\
    U.S.A.}
\email{mjlarsen@indiana.edu}
\author{Gunter Malle}
\address{FB Mathematik, TU Kaiserslautern, 
    67653 Kaiserslautern, Germany}
\email{malle@mathematik.uni-kl.de}
\author{Pham Huu Tiep}
\address{Department of Mathematics\\
    University of Arizona\\
    Tucson, AZ 85721\\
    U. S. A.}
\email{tiep@math.arizona.edu}

\date{Oct. 7, 2010}

\subjclass{20C15, 20C20, 20C30, 20C33}

\thanks{The authors are grateful to Marty Isaacs for suggesting this problem 
to them. Michael Larsen was partially supported by NSF Grant DMS-0800705, and
Pham Huu Tiep was partially supported by NSF Grant DMS-0901241.}

\begin{abstract}
Answering a question of I. M. Isaacs, we show that the largest degree of 
irreducible complex representations of any finite non-abelian simple group can
be bounded in terms of the smaller degrees. We also study the asymptotic
behavior of this largest degree for finite groups of Lie type. Moreover, we
show that for groups of Lie type, the Steinberg character has largest degree
among all unipotent characters.
\end{abstract}

\maketitle

\section{Introduction}
For any finite group $G$, let $b(G)$ denote the largest degree of any irreducible
complex representation of $G$. Certainly, $b(G)^2 \leq |G|$, and this trivial
bound is best possible in the following sense. One can write
$|G| = b(G)(b(G)+e)$ for some non-negative integer $e$. Then $e = 0$ if and
only if $|G| = 1$. Y. Berkovich showed that $e=1$ precisely when $|G| = 2$
or $G$ is a $2$-transitive Frobenius group, cf. \cite[Theorem 7]{Be}. 
In particular, there is no upper bound on $|G|$ when $e=1$. 
On the other hand, it turns out that $|G|$ \emph{can} be bounded in terms of $e$ if
$e > 1$. Indeed, N. Snyder showed in \cite{Sn} that $|G| \leq ((2e)!)^2$. 

One can ask whether the largest degree $b(G)$ can be bounded in terms of 
the remaining degrees of $G$. More precisely, can one bound 
$$\varep(G) :=
  \dfrac{\sum_{\chi \in \Irr(G),~\chi(1) < b(G)}|\chi(1)|^{2}}{b(G)^{2}}$$
away from $0$ for all finite groups $G$?   
The aforementioned result of Berkovich immediately implies a negative answer to 
this question for general groups.
M. Isaacs raised the question whether there exists a universal constant 
$\varep > 0$ such that $\varep(S) \geq \varep$ for all \emph{simple} groups $S$.
Assuming an affirmative answer to this question, he has 
improved Snyder's bound to the polynomial bound $|G| \leq Be^6$ (for some 
universal constant $B$ and for all finite groups $G$ with $e > 1$),
cf. \cite{I}.

The main goal of this paper is to answer Isaacs' question in the affirmative:

\begin{thm}   \label{main1}
 There exists a universal constant $\varep > 0$ such that 
 $\varep(S) \geq \varep$ for all finite non-abelian simple groups $S$.
\end{thm}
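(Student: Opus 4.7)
The plan is to apply the classification of finite simple groups and argue family by family, with the bulk of the work in the Lie type case. The sporadic simple groups form a finite list, so a direct inspection of their character tables yields a positive lower bound. For the alternating groups $A_n$ with $n$ beyond a small threshold, the theorem of Vershik and Kerov ensures that $b(S_n)^2/|S_n| \to 0$ as $n \to \infty$, which forces a large mass of character-degree squares strictly below $b(S_n)$ and hence $\varep(S_n) \to \infty$. The same conclusion survives upon passing to $A_n$: for $\lambda \neq \lambda'$ the character $\chi^\lambda$ restricts irreducibly to $A_n$ of unchanged degree, while self-conjugate $\lambda$ contribute pairs of characters of half degree, which if anything increases the diversity of degrees below any given maximum. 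Small $n$ are handled by finite computation.

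For the groups of Lie type $G = G(q)$ I would use Deligne--Lusztig theory: every $\chi \in \Irr(G)$ lies in a rational Lusztig series $\cE(G, s)$ indexed by a semisimple conjugacy class $[s]$ in the dual group $G^*$, and the characters in $\cE(G, s)$ have degree $|G^* : C_{G^*}(s)|_{p'} \cdot \psi(1)$ for a unipotent character $\psi$ of $C_{G^*}(s)$. The maximum $b(G)$ is typically achieved when $s$ is regular semisimple with centralizer a maximal torus of small order. For each fixed Cartan type, $|G|/b(G)^2$ grows polynomially in $q$ while the multiplicity of $b(G)$ is polynomially bounded, so $\varep(G(q)) \to \infty$ as $q \to \infty$, and only finitely many $q$ per type require individual attention. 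To secure a uniform positive lower bound I would exhibit explicit ``runner-up'' Deligne--Lusztig characters: given the torus $T$ realizing $b(G)$, choose a second torus $T'$ of comparable but distinct order and produce regular semisimple elements of $T'$ whose associated irreducible characters have degree of the same order of magnitude as $b(G)$ but strictly less, contributing enough mass to force $\varep(G) \geq \varep_2 > 0$.

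The main obstacle will be the Lie type step, specifically achieving uniformity across all Cartan types, ranks, and values of $q$ simultaneously. Low-rank and small-$q$ cases such as $\PSL_2(q)$, $\PSL_3(q)$, $\PSU_3(q)$, $\Sp_4(q)$, and the Suzuki and Ree families, where the Lusztig parametrization can degenerate or character degrees accidentally coincide, will require delicate explicit analysis. A related subtlety is the passage from characters of the finite reductive group (e.g.\ $\GL_n(q)$) to those of the simple quotient (e.g.\ $\PSL_n(q)$), since the center and the diagonal outer automorphisms interact nontrivially with the Lusztig series and can redistribute mass among degrees. I would expect the bulk of the paper to be devoted to carrying out this Lie-type analysis carefully, perhaps after first establishing the auxiliary statement (also announced in the abstract) that the Steinberg character is of largest degree among all unipotent characters, which gives a convenient explicit anchor for the degree comparisons.
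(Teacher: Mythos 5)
Your decomposition into sporadic, alternating, and Lie type families, and your overall framework for the Lie type case, match the paper's. The sporadic case is indeed a finite check, and the Lie type treatment in the paper is essentially the one you outline: use the Lusztig parametrization and Seitz's bound $b(L)\le |L|_{p'}/|T_0|$ for a torus $T_0$ of minimal order, then exhibit ``runner-up'' semisimple characters attached to regular semisimple elements of a second torus $T'$, with separate care for small $q$ (especially $q=2$) and for the passage from $\GL_n(q)$ to $\PSL_n(q)$. You correctly anticipate that the Steinberg-is-largest-among-unipotent-characters result serves as an anchor; in the paper it is in fact used mainly in the $q=2$ families, while for $q\ge 3$ the argument works directly from Seitz's bound plus a count of regular semisimple elements in the chosen torus. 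These differences are matters of detail, not of strategy.

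The alternating-group step, however, contains a genuine gap. You argue that $b(\SSS_n)^2/|\SSS_n|\to 0$ (Vershik--Kerov, Logan--Shepp) ``forces a large mass of character-degree squares strictly below $b(\SSS_n)$.'' This does not follow. Writing $m$ for the number of irreducible characters of top degree, one has
\[
\varep(G)\;=\;\frac{|G|}{b(G)^2}\;-\;m,
\]
and $|G|/b(G)^2\to\infty$ alone tells you nothing without an upper bound on $m$. The Vershik--Kerov/Logan--Shepp result gives $b(\SSS_n)=\sqrt{n!}\,e^{-c\sqrt n(1+o(1))}$ but says nothing about how many partitions achieve (or come within a factor of) the maximum; a priori essentially all of the Plancherel mass could be concentrated at degree exactly $b(\SSS_n)$, in which case $\varep$ would be small. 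The number of partitions $p(n)$ is itself $e^{\Theta(\sqrt n)}$, with exponent constant comparable to $2c$, so one cannot even crudely dominate $m$ by $p(n)$ without a careful comparison of constants that the cited asymptotics do not give you for free. The paper sidesteps this entirely with a combinatorial argument (its Theorem~\ref{sym}) via the branching rule and hook lengths: given any $\Delta\in Y(n)$ it \emph{constructs} a specific $\Gamma\in Y(n)$ whose dimension ratio to $\Delta$ lies in $[\delta,\infty)$ and avoids a prescribed finite set $S$; taking $S=\{1/2,1,2\}$ to control the $\SSS_n\to\AAA_n$ descent, a single such runner-up character already yields $\varep(\AAA_n)\ge\delta^2/4$. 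That explicit construction is the key idea your proposal is missing; Plancherel asymptotics alone do not substitute for it.
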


We note that our $\varep$ is implicit because of the proof of Theorem 
\ref{sym}. It would be interesting to get an explicit $\varep$; also,
we do not know of any non-abelian simple group $S$ where $\varep(G) < 1$. As 
pointed out by Isaacs in \cite{I}, if $\varep(S) \geq 1$ for all non-abelian 
simple groups $S$, then his polynomial bound $Be^6$ can be improved to 
$|G| \leq e^6 +e^4$.   
  
For many simple groups $S$, one knows exactly what $b(S)$ is. However, 
for alternating groups $\AAA_n$ there are only asymptotic formulae,
see \cite{VK} and \cite{LS}. For simple classical groups over 
small fields $\F_q$, the right asymptotic for $b(S)$ has not been determined.
In this paper we provide the following lower and upper bounds:

\begin{thm}   \label{main2}
 For any $1 > \varep > 0$, there are some (explicit) constants $A, B > 0$ 
 depending on $\varep$ such that, for any simple algebraic group $\cG$ in 
 characteristic $p$ of rank $n$ and any Frobenius map $F : \cG \to \cG$, the
 largest degree $b(G)$ of the corresponding finite group $G:= \cG^F$ over $\F_q$
 satisfies the following inequalities:  
 $$A(\log_qn)^{(1-\varep)/\gamma} < \frac{b(G)}{|G|_p}
   < B(\log_qn)^{2.54/\gamma}$$
 if $G$ is classical, and 
 $$1 \leq \frac{b(G)}{|G|_p} < B$$
 if $G$ is an exceptional group of Lie type. 
 Here, $\gamma = 1$ if $G$ is untwisted of type $A$, and $\gamma = 2$ otherwise.
\end{thm}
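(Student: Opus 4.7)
The plan is to handle the three cases --- classical lower bound, classical upper bound, and exceptional groups --- separately, in each case exploiting Lusztig's classification of $\Irr(G^F)$, which expresses every $\chi \in \Irr(G)$ in a rational Lusztig series $\mathcal{E}(G,(s))$ with $\chi(1) = [G^{*F}:C_{G^*}(s)^F]_{p'}\,\psi(1)$ for some semisimple $s \in G^{*F}$ and some unipotent character $\psi$ of $C_{G^*}(s)^F$. For the exceptional case, the rank is bounded, so there are only finitely many Dynkin types; the lower bound $b(G)\ge |G|_p$ is immediate from $\St_G(1)=|G|_p$, and for the upper bound one reads off $b(G)/|G|_p$ from the finitely many generic degree polynomials (each a rational function of $q$ with controlled behavior as $q\to\infty$), yielding a uniform constant.

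For the classical lower bound I would construct an explicit irreducible character of large degree via Deligne--Lusztig induction. Consider a regular semisimple $s\in G^{*F}$ whose centralizer is a maximal torus $T\cong \prod_i \GL_1(q^{d_i})$, with the orbit sizes $d_i$ chosen greedily among the smallest values (using essentially all available irreducible monic polynomials of degree $\le D$ over $\F_q$). A direct computation factors the resulting semisimple character degree as
\[
\frac{\chi_s(1)}{|G|_p}
\;=\;
\frac{\prod_{j=1}^n(1-q^{-j})}{\prod_{i}(1-q^{-d_i})}.
\]
Since the count of irreducible polynomials of degree $d$ is roughly $q^d/d$, the budget constraint $\sum_i d_i=n$ allows one to push $D$ up to $\sim \log_q n$, and a calculation shows that the denominator product is then small enough to produce a ratio of order $\log_q n$, comfortably exceeding $A(\log_q n)^{(1-\varep)/\gamma}$. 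The analogous construction inside classical groups of non-$A$ type accounts for the factor $1/\gamma$ via the fact that tori there are built out of factors of order $q^d\pm 1$ with parity constraints, effectively halving the usable range.

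For the classical upper bound I would bound $\chi(1)/|G|_p$ by combining two ingredients: a bound on the ``semisimple index'' $[G^{*F}:C^F]_{p'}/|G|_p$ where $C=C_{G^*}(s)$, and a bound on $\psi(1)/|C^F|_p$ for unipotent characters $\psi$ of $C^F$. Writing $C^F$ as a product of general linear/classical factors over $\F_{q^{d_i}}$, the first ingredient reduces to exactly the kind of product $\prod(1-q^{-j})/\prod(1-q^{-d_i j})$ analyzed for the lower bound, and is bounded by a power of $\log_q n$. The unipotent factor $\psi$ is a product over components, so one reduces to controlling the maximum unipotent character degree of a single classical group of rank $m$ relative to its $p$-part; for type $A$ factors this ratio is in fact $\le 1$ (Steinberg dominates), and for the remaining classical factors one invokes Lusztig's generic degree formula in terms of symbols plus hook-length estimates to get a bound of the form $C(\log_q m)^{c/\gamma}|H|_p$. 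The main obstacle, and the source of the exponent $2.54$ in the theorem, is precisely this combinatorial estimate on unipotent character degrees for symplectic and orthogonal groups: one must bound the maximum of a ratio of products of $(q^{h}-1)$-terms indexed by the boxes of a symbol, where the optimal symbols are delicate and the constant reflects estimates akin to those behind the Hardy--Ramanujan asymptotic $\log p(n)\sim \pi\sqrt{2n/3}$ for partitions.
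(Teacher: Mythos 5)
Your reductions for the exceptional groups and for the classical lower bound are essentially the paper's: bounded rank handles the exceptional case; and constructing a regular semisimple $s$ with torus centralizer built greedily from $\GL_1(q^d)$-factors, subject to $\sum d_i\le n$ and the count $\fp_d\approx q^d/d$ of available degree-$d$ polynomials (halved by self-duality constraints in types $B$, $C$, $D$), is exactly how the paper proves Theorems \ref{bound4}--\ref{bound6}. The extra care descending from $\GL_n$ or $\SO^\e_{2n}$ to the simple quotient is omitted in your sketch but is routine.

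The genuine gap is in your account of the classical upper bound, and it shows you have misread where the exponent $2.54$ comes from. You propose that the unipotent factor $\psi(1)/|C^F|_p$ can grow like $(\log_q m)^{c/\gamma}$ for the symplectic and orthogonal components of $C_{G^*}(s)^F$, and that this symbol-combinatorics estimate is ``the source of the exponent $2.54$.'' That is wrong on both counts. The paper first proves (Theorem~\ref{thm:steinberg}, the content of all of Section~3, via the quantized hook formula for $\GL_n$, the $\GL$-to-$\GU$ comparison, and the symbol-reduction algorithm of Proposition~\ref{prop:Stclass}) that the Steinberg character has strictly largest degree among \emph{all} unipotent characters of \emph{every} finite group of Lie type --- not just type $A$. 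Consequently $\psi(1)\le |C^F|_p$ up to a bounded constant (the factor $2\kappa$ handles disconnected centralizers), and the unipotent part contributes \emph{no} $\log$-growth at all. The entire polylogarithmic factor, and the specific constant $\alpha=2\ln(32/9)\approx 2.537$, arises exclusively from the semisimple index: one estimates $\prod_j(1-q^{-j})/\prod_i\prod_k(1-q^{-kd_i})$ using the pentagonal-number-theorem bound $\prod_{i\ge1}(1-q^{-i})>\exp(-\alpha/q)$ of Lemma~\ref{sum}(i), and then a greedy optimization shows $\sum a_d q^{-d}\lesssim\tfrac1\gamma\ln\log_q n$. Your proposal as written therefore does not establish the claimed exponent; and if carried out literally with a polylogarithmic unipotent factor it would double the exponent. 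The Hardy--Ramanujan analogy plays no role. To repair the argument you need the ``Steinberg dominates'' theorem for symplectic and orthogonal groups, which is a substantive input that must be proved, not assumed.
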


Even more explicit lower and upper bounds for $b(G)$ are proved in \S5 for 
finite classical groups $G$, cf. Theorems \ref{bound4}, \ref{bound5}, and 
\ref{bound6}.

Certainly, any upper bound for $b(G)$ also holds for the largest degree
$b_{\ell}(G)$ of the $\ell$-modular irreducible representations of $G$. Here is 
a lower bound for $b_{\ell}(G)$:
 
\begin{thm}   \label{main3}
 There exists an (explicit) constant $C > 0$ such that, for any simple 
 algebraic group $\cG$ in characteristic $p$, any Frobenius map 
 $F : \cG \to \cG$, and any prime $\ell$, the largest degree $b_{\ell}(G)$
 of $\ell$-modular irreducible representation of $G := \cG^F$ satisfies the
 inequality $b_{\ell}(G)/|G|_p \geq C$.
\end{thm}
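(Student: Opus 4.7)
The plan is to treat the defining characteristic case $\ell = p$ and the cross characteristic case $\ell \neq p$ separately, in each case exhibiting an irreducible $\ell$-modular representation of $G$ of dimension at least a constant multiple of $|G|_p$ --- typically a modular version of the Steinberg representation.

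When $\ell = p$, the Steinberg module of $G$ --- realized e.g.\ via the Steinberg idempotent in $\F_p G$, or as the top homology of the Bruhat--Tits building --- is an irreducible $\F_p G$-module of dimension $|G|_p$. Hence $b_p(G) \geq |G|_p$, and the statement holds trivially with constant $1$ in this case.

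When $\ell \neq p$, the ordinary Steinberg character $\St$ still has degree $|G|_p$, which is coprime to $\ell$. Writing its $\ell$-modular decomposition $\bar\St = \sum_\phi d_{\St,\phi}\,\phi$, where $\phi$ ranges over the irreducible $\ell$-Brauer characters of $G$, gives
\[
|G|_p \;=\; \sum_{\phi} d_{\St,\phi}\, \phi(1),
\]
so a uniform lower bound on $\max_\phi \phi(1)/|G|_p$ follows from a uniform upper bound on the total multiplicity $\sum_\phi d_{\St,\phi}$. When $\ell$ is sufficiently large compared with the rank (e.g.\ larger than the Coxeter number), one expects $\bar\St$ itself to be irreducible by the unitriangular shape of the generic decomposition matrix of the unipotent blocks: in each standard parametrization, Steinberg corresponds to the extremal label (such as $(1^n)$ for type $A$, and analogous symbols for the other classical types) and admits no ``lower'' contributions in the relevant partial order, so that $\bar\St = \phi_\St$ is irreducible.

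The main obstacle is the case of small or bad primes $\ell$ --- in particular $\ell \leq n$ in type $A$, $\ell = 2$ for types $B$, $C$, $D$, and the usual short list of bad primes for exceptional types --- where Steinberg may genuinely decompose and a priori the number of constituents might grow with the rank. Here I would combine the finite case analysis available for exceptional types with the known structure of the decomposition matrices of unipotent blocks for classical groups in non-defining characteristic (due to Dipper--James, Geck--Hiss, Gruber--Hiss, James and others) to bound the multiplicities and number of constituents of $\bar\St$ by an absolute constant. In the residual cases where even this argument is insufficient, one can instead produce an $\ell$-modular irreducible of large degree from a different source, e.g.\ an irreducible Deligne--Lusztig character $\pm R_T^\theta$ (with $\theta$ in general position) lying in an $\ell$-block of defect zero, obtained by choosing a $W$-conjugacy class of $F$-stable maximal tori $T$ with $|T|$ coprime to $\ell$ --- possible for a suitable class outside the bad configurations, with an explicit degree estimate $|R_T^\theta(1)| = [G:T]_{p'} \geq C|G|_p$. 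Taking the infimum of the constants arising in each of the finitely many Lie type + prime configurations then yields the desired explicit universal $C > 0$.
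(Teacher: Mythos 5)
Your treatment of the case $\ell = p$ is correct and matches the paper exactly: the Steinberg module is irreducible of dimension $|G|_p$ in defining characteristic, so one may take $C = 1$ there.

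For $\ell \neq p$ the proposal goes astray. Your primary plan — to bound, uniformly in the rank, the number and multiplicities of $\ell$-modular constituents of $\bar{\St}$ — is not something the cited decomposition-matrix literature delivers: the known results (Dipper--James, Geck--Hiss, Gruber--Hiss, James) give structural information on decomposition matrices of unipotent blocks but do not produce a rank-independent upper bound on $\sum_{\phi} d_{\St,\phi}$, and indeed no such uniform bound is known; this route is a research program, not a proof. Your fallback plan is closer to the paper's argument but has a concrete gap: you want a maximal torus $T$ with $\ell \nmid |T|$ so that an irreducible $\pm R_T^\theta$ lies in a block of $\ell$-defect zero. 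For types $B_n$, $C_n$, $D_n$ over $\F_q$ with $q$ odd and $\ell = 2$, this is impossible — every maximal torus order is a product of integers $q^{a_i} \pm 1$, all of which are even, so $2 \mid |T|$ for every maximal torus. Thus the defect-zero approach cannot cover all cases.

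The paper avoids this by not requiring $\ell \nmid |C_{G^*}(s)|$. It instead chooses a regular semisimple \emph{$\ell'$-element} $s \in [G^*,G^*]$ (an element of prime Zsigmondy order in one of two judiciously chosen maximal tori $T_1$, $T_2$, the choice depending on $\ell$) with connected centralizer $C_{G^*}(s)$ equal to a torus of order at most $2q^r$. The associated semisimple character $\chi_s$ then has degree $|G|_{p'}/|C_{G^*}(s)| \geq C|G|_p$, and — this is the key ingredient your proposal is missing — by a result of Brou\'e--Michel (as in \cite[Prop.~1]{HM}) every Brauer character in the $\ell$-block containing $\chi_s$ has degree divisible by $\chi_s(1)$, regardless of whether $\ell$ divides $|C_{G^*}(s)|$. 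This forces the reduction $\bar{\chi}_s$ to be irreducible. The Brou\'e--Michel block-divisibility result is what replaces your defect-zero requirement and makes the argument go through for all $\ell \neq p$, including bad and small primes.
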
 

\section{Symmetric Groups}   \label{sec:sym}

We recall some basic combinatorics connected with symmetric groups.
By a \emph{Young diagram}, we mean a finite subset $\Delta$ of
$\Z^{>0}\times \Z^{>0}$ such that for all $(x,y)\in \Z^{>0}\times \Z^{>0}$,
$(x+1,y)\in \Delta$ or $(x,y+1)\in \Delta$ implies $(x,y)\in\Delta$. 
Elements of $\Delta$ are called \emph{nodes}.
We
denote by $Y(n)$ the set of Young diagrams of cardinality $n$.
For any fixed $\Delta$, we let $l$ and $k$ denote the largest $x$-coordinate
and $y$-coordinate in $\Delta$ respectively and define $a_j$ for $1\le j\le k$
and $b_i$ for $1\le i\le l$ by
$$a_j := \max \{i\mid (i,j)\in \Delta\}$$
and likewise
$$b_i := \max \{j\mid (i,j)\in \Delta\}.$$
Thus, for each $\Delta$, we have a pair of mutually transpose partitions
$$n = a_1+\cdots +a_k = b_1+\cdots+b_l.$$
For each $(i,j)\in\Delta$, we define the \emph{hook} $H_{i,j}:=H_{i,j}(\Delta)$
to be the set of $(i',j')\in \Delta$ such that $i'\ge i$, $j'\ge j$, and
equality holds in at least one of these two inequalities. We define the
\emph{hook length}
$$h(i,j) := h_{i,j}(\Delta) := |H_{i,j}(\Delta)| = 1+a_j-i+b_i-j,$$
and set
$$P := P(\Delta) := \prod_{(i,j)\in \Delta} h_{i,j}.$$

Define $\cA(\Delta)$ (resp.  $\cB(\Delta)$) to be the set of nodes that can be added (resp. removed) from $\Delta$ to produce another Young diagram:
$$\cA(\Delta):= \{(i,j)\in \Z^{>0}\times \Z^{>0}\mid \Delta\cup\{(i,j)\}\in Y(n+1)\}$$
and
$$\cB(\Delta):= \{(i,j)\in \Delta\mid \Delta\setminus \{(i,j)\}\in Y(n-1)\}.$$
Thus $\cA(\Delta)$ consists of the pair $(1,k+1)$
and pairs $(a_j+1,j)$ where $j=1$ or $a_j < a_{j-1}$. In particular,
the values $i$ for $(i,j)\in \cA(\Delta)$ are pairwise distinct, so
$$n \ge \sum_{(i,j)\in \cA(\Delta)} (i-1)
    \ge \frac{|\cA(\Delta)|^2 - |\cA(\Delta)|}2,$$
and $|\cA(\Delta)| < \sqrt{2n}+1$. Similarly, $\cB(\Delta)$ consists of
the pairs $(a_j,j)$ where either $j=k$ or $a_j > a_{j+1}$. Hence 
$$n \ge \sum_{(i,j)\in \cB(\Delta)} i 
    \ge \frac{|\cB(\Delta)|^2 + |\cB(\Delta)|}2,$$
and $|\cB(\Delta)| < \sqrt{2n}$. For $(i,j)\in \cA(\Delta)$,
the symmetric difference between $\cA(\Delta)$ and $\cA(\Delta\cup \{(i,j)\})$
consists of at most three elements: $(i,j)$ itself and possibly $(i+1,j)$
and/or $(i,j+1)$. Likewise, the symmetric difference between $\cB(\Delta)$ and 
$\cB(\Delta\setminus \{(i,j)\})$ consists of at most three elements: $(i,j)$
and possibly $(i-1,j)$ and/or $(i,j-1)$.

There are bijective correspondences between elements of $Y(n)$, partitions
$n=\sum_j a_j$, dual partitions $n = \sum_i b_i$, and complex irreducible
characters of $\SSS_n$.  By the hook length formula, the degree of the
character associated to $\Delta$ is $n!/P(\Delta)$.

The branching rule for $\SSS_{n-1}<\SSS_n$ asserts that the restriction to
$\SSS_{n-1}$ of the irreducible representation $\rho(\Delta)$ of $\SSS_n$
associated to $\Delta\in Y(n)$ is the direct sum of $\rho(\Delta\setminus(i,j))$
over all $(i,j)\in \cB(\Delta)$.
By Frobenius reciprocity, it follows that the induction from $\SSS_n$ to
$\SSS_{n+1}$ of $\rho(\Delta)$ is the direct sum of $\rho(\Delta\cup\{(i,j)\})$
over all $(i,j)\in \cA(\Delta)$.
We can now prove the main theorem of this section.

\begin{thm}   \label{sym}
 Let $S\subset \R$ be a finite set. Then there exists $N$ and $\delta>0$
 such that for all $n>N$ and every irreducible character $\phi$ of $\SSS_n$,
 there exists an irreducible character $\psi$ of $\SSS_n$ such that
 $$\frac{\psi(1)}{\phi(1)} \in [\delta,\infty) \setminus S.$$
\end{thm}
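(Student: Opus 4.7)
The plan is to exploit the branching rule for $\SSS_{n-1} < \SSS_n$ to produce a controlled family of candidate irreducibles. Writing $\phi = \rho(\Delta)$, introduce the set of two-step neighbors $T(\Delta) := \{\Delta - b + a : b \in \cB(\Delta),\, a \in \cA(\Delta - b),\, a \neq b\} \subset Y(n)$. Each element of $T(\Delta)$ has a unique expression $\Delta - b + a$ (the two boxes are exactly $\Delta \setminus \Delta'$ and $\Delta' \setminus \Delta$), so two applications of the branching rule yield the decomposition
$$\Ind_{\SSS_{n-1}}^{\SSS_n} \Res_{\SSS_{n-1}}^{\SSS_n} \phi \;=\; |\cB(\Delta)|\cdot\phi \;\oplus\; \bigoplus_{\Delta' \in T(\Delta)} \rho(\Delta'),$$
and comparing dimensions gives the mass identity $\sum_{\Delta' \in T(\Delta)} \dim \rho(\Delta') = (n - |\cB(\Delta)|)\phi(1)$, which is at least $(n - \sqrt{2n})\phi(1)$. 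Moreover $|T(\Delta)| \leq |\cB(\Delta)|(|\cB(\Delta)|+1) = O(n)$.

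To produce a single sibling of large dimension, I pick $b^* \in \cB(\Delta)$ maximizing $\dim\rho(\Gamma)$ with $\Gamma = \Delta\setminus\{b^*\}$, so that $\dim\rho(\Gamma) \geq \phi(1)/|\cB(\Delta)| \geq \phi(1)/\sqrt{2n}$ by pigeonhole on $\Res\phi$. Then $\Ind\rho(\Gamma) = \bigoplus_{a\in\cA(\Gamma)}\rho(\Gamma\cup\{a\})$ has total dimension at least $\phi(1)\sqrt{n/2}$ distributed across $|\cA(\Gamma)| \leq \sqrt{2n}+1$ constituents, exactly one of which (for $a = b^*$) equals $\phi$ itself. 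Averaging over the remaining non-$\phi$ summands, some sibling $\rho(\Delta')$ has $\dim\rho(\Delta')/\phi(1) \geq \tfrac{1}{2} - o(1)$ as $n \to \infty$.

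The main obstacle is ensuring the chosen sibling's ratio avoids the finite set $S$. When $|\cB(\Delta)|$ is not too small (comparable to $\sqrt{n}$), varying $b^*$ over $\cB(\Delta)$ and invoking the mass identity produces more than $|S|$ distinct sibling ratios in $[\delta,\infty)$ for a universal $\delta > 0$, and pigeonhole selects one outside $S$. The genuinely hard case is when $\Delta$ is close to a rectangle or a hook, so that $|T(\Delta)|$ is uniformly bounded (for instance $|T(\Delta)| = 2$ when $\Delta = (m^k)$ is rectangular). For these exceptional shapes I would compute the finitely many sibling dimension ratios explicitly from the hook length formula; they turn out to be rational functions of the shape parameters with definite limits (e.g.\ $\Delta = (m,m)$ has a sibling with ratio $3m/(m+2) \to 3$), so for any fixed finite $S$ these ratios escape $S$ once $n$ is sufficiently large. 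Combining the generic and exceptional analyses delivers the uniform constants $\delta = \delta(S) > 0$ and $N = N(S)$ required.
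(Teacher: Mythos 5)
Your decomposition of $\Ind\Res\phi$ and the resulting mass identity are correct (and agree with the paper's $N(\Delta)$ bookkeeping), and the averaging argument producing one sibling with ratio $\geq \tfrac12 - o(1)$ is fine. But the core difficulty of the problem—forcing the chosen ratio \emph{outside} $S$—is where your proposal has a genuine gap.

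The claim that ``when $|\cB(\Delta)|$ is comparable to $\sqrt{n}$, varying $b^*$ produces more than $|S|$ distinct sibling ratios in $[\delta,\infty)$, and pigeonhole selects one outside $S$'' is unjustified and, as stated, false as a plan. Having $\Theta(\sqrt n)$ choices of $b^*$ tells you nothing about how many \emph{distinct} ratios you get, and nothing prevents \emph{all} the large siblings from having ratios that land in $S$. Indeed, this is precisely the scenario the paper isolates: after the dichotomy (some sibling escapes $S$, or at least $\epsilon n$ elements of $N(\Delta)$ have ratios in $S$), one is left with $\Omega(n)$ siblings whose ratios all lie in $S$, and no first-order counting argument resolves it. The paper's resolution is to pass to \emph{pairs} of elements of $N(\Delta)$—octuples performing two disjoint box moves—compute $P(\Delta)P(\Delta_{1234})/(P(\Delta_{12})P(\Delta_{34}))$ explicitly as $\frac{a(a+2)}{(a+1)^2}\cdot\frac{b(b-2)}{(b-1)^2}$ via the hook-length formula, and then argue that the set of octuples producing any fixed excluded value is $O(n^{3/2}) = o(n^2)$, so some octuple escapes $S$. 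Your proposal has no analogue of this two-step construction or the hook-product computation.

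Your identification of the hard case is also backwards. Rectangles and hooks, with $|T(\Delta)|$ uniformly bounded, are the \emph{easy} case—there are finitely many ratios to examine explicitly. The hard case is a generic-looking shape (e.g.\ a staircase) where $|T(\Delta)| = \Theta(n)$ but a constant fraction of the sibling ratios coincide with elements of $S$; pigeonhole cannot separate them, and this is exactly what defeats a one-box-move argument. You would need to discover something like the paper's second-order move to close the gap.
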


\begin{proof}
Equivalently, we prove that for all $\Delta\in Y(n)$ there exists
$\Gamma\in Y(n)$ such that
$$\frac{\dim\rho(\Gamma)}{\dim\rho(\Delta)} 
= \frac{P(\Delta)}{P(\Gamma)} \in [\delta,\infty) \setminus S.$$

Consider the decomposition of
\begin{equation}
  \label{downup}
  \Ind_{\SSS_{n-1}}^{\SSS_n}\Res _{\SSS_{n-1}}^{\SSS_n} \rho(\Delta)
\end{equation}
into irreducible summands of the form $\rho(\Gamma)$. These summands are
indexed by the set $N(\Delta)$ of quadruples $(i_1,j_1,i_2,j_2)$, where
$(i_1,j_1)\in \cB(\Delta)$ and $(i_2,j_2)\in \cA(\Delta\setminus\{(i_1,j_1)\})$.
Clearly, $|N(\Delta)| < \sqrt{2n}(\sqrt{2n}+1)$, while the degree of the
representation (\ref{downup}) equals $n\dim \rho(\Delta)$. Thus, there exists
$\epsilon>0$, depending only on $S$, such that if $n$ is sufficiently large,
either there exists an element of $N(\Delta)$ with corresponding diagram
$\Gamma\in Y(n)$ such that 
$$\frac{\dim\rho(\Gamma)}{\dim\rho(\Delta)} \in [\delta,\infty) \setminus S$$
or there exist at least $\epsilon n$ elements of $N(\Delta)$ with corresponding diagrams $\Gamma$ such that 
\begin{equation}
  \label{inS}
  \frac{\dim\rho(\Gamma)}{\dim\rho(\Delta)} \in S.
\end{equation}
We need only treat the latter case.

Consider octuples $(i_1,j_1,\ldots,i_4,j_4)$ such that $(i_1,j_1,i_2,j_2)$
and $(i_3,j_3,i_4,j_4)$ are in $N(\Delta)$, every $\Gamma$ corresponding to
either of them satisfies (\ref{inS}), the coordinates $i_1,i_2,i_3,i_4$ are
pairwise distinct, and the same is true for the coordinates $j_1,j_2,j_3,j_4$.
The number of such octuples must be at least $\epsilon^2 n^2/2$ if $n$ is
sufficiently large. Let us fix one. We set
$$\Delta_{12} := (\Delta\setminus \{(i_1,j_1)\})\cup\{(i_2,j_2)\}$$
and
$$\Delta_{34} := (\Delta\setminus \{(i_3,j_3)\})\cup\{(i_4,j_4)\}.$$
By the distinctness of the $i$ and $j$ coordinates, we have
$$(i_3,j_3)\in \cB(\Delta_{12})$$
and
$$(i_4,j_4)\in \cA(\Delta_{12}\setminus \{(i_3,j_3)\}).$$
Let 
$$\Delta_{1234} := ((\Delta_{12}\setminus \{(i_3,j_3)\})\cup \{(i_4,j_4)\}.$$

Given $(i,j), (i',j')\in \cA(\Delta)$, we can compare $h_{(i',j')}(\Delta)$ to
$h_{(i',j')}(\Delta\cup \{(i,j)\})$. If $i\neq i'$ and $j\neq j'$, the hook
lengths are equal, but if $i=i'$ or $j=j'$, then 
$$h_{(i',j')}(\Delta\cup \{(i,j)\}) = h_{(i',j')}(\Delta)+1.$$
From this formula, we deduce that
$$\frac{P(\Delta)P(\Delta_{1234})}{P(\Delta_{12})P(\Delta_{34})} 
= \frac{a(a+2)}{(a+1)^2}\cdot\frac{b(b-2)}{(b-1)^2},$$
where
$$a = h_{(\min(i_2,i_4),\min(j_2,j_4))}(\Delta),\ 
  b = h_{(\min(i_1,i_3),\min(j_1,j_3))}(\Delta).$$
Letting $S^2 = \{s_1 s_2\mid s_1,s_2\in S\}$, we conclude that
$$\frac{P(\Delta_{1234})}{P(\Delta)} \in 
  \left(\frac{a(a+2)}{(a+1)^2}\cdot
  \frac{b(b-2)}{(b-1)^2}\right)S^2.$$
As long as $\delta$ is chosen less than $(9/16)(\min S)^2$, this value is
automatically greater than $\delta$.  It remains to show that we can choose
the octuple $(i_1,\ldots,j_4)$ such that the value is not in $S$.  

There are finitely many values of $t$ such that $tS^2 \cap S$ is non-empty,
and we need only consider values of $a$ and $b$ for which
$$\frac{a(a+2)}{(a+1)^2}\cdot\frac{b(b-2)}{(b-1)^2}$$
lies in this finite set.  We claim that for each value $t$, the set of octuples
which achieves this value is $o(n^2)$. The claim implies the theorem. To
prove the claim we note that there are $O(n^{3/2})$ possibilities for
$(i_1,j_1,i_2,j_2,i_3,j_3)$.  Given one such value, $b$ is determined, so if
$t$ is fixed, so is $a$.  For a given value of $a$ and given $i_2$ and $j_2$,
there are at most two possibilities for $(i_4,j_4)\in \cA(\Delta)$ with
$h_{(\min(i_2,i_4),\min(j_2,j_4))}(\Delta)$ achieving this fixed value.
The claim follows.
\end{proof}

\begin{cor}   \label{alt}
 There is some constant $\varep > 0$ such that $\varep(\AAA_n) \geq \varep$
 for all $n \geq 5$.
\end{cor}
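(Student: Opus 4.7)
The plan is to apply Theorem~\ref{sym} to a suitable $\SSS_n$-lift of a top character of $\AAA_n$, and then pull the resulting nearby $\SSS_n$-character back to $\AAA_n$ using the standard index-$2$ description of $\Irr(\AAA_n)$ in terms of $\Irr(\SSS_n)$.

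First I would fix $\phi \in \Irr(\AAA_n)$ with $\phi(1) = b(\AAA_n) =: b$ and pick $\chi \in \Irr(\SSS_n)$ with $\phi$ appearing in $\chi|_{\AAA_n}$. By Clifford theory for the index-$2$ inclusion $\AAA_n \le \SSS_n$, one has $\chi(1) \in \{b, 2b\}$: the two cases correspond respectively to whether the partition underlying $\chi$ is not, or is, self-conjugate. The same argument also shows $b(\SSS_n) \le 2b$, a bound I will use below.

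Next I would apply Theorem~\ref{sym} to $\chi$ with the exclusion set $S = \{1/2, 1, 2\}$, obtaining $\delta > 0$ and $N$ such that for every $n > N$ there is $\psi \in \Irr(\SSS_n)$ with $\psi(1)/\chi(1) \in [\delta,\infty) \setminus \{1/2, 1, 2\}$. Restricting $\psi$ to $\AAA_n$ yields one or two irreducible constituents of dimension $\psi(1)$ or $\psi(1)/2$. A short case analysis on $\chi(1) \in \{b, 2b\}$ and on whether $\psi$ comes from a self-conjugate partition shows that some such constituent $\psi'$ always satisfies $\delta b/2 \le \psi'(1) < b$. The two driving observations are: if $\psi(1) > b$, then $\psi$ must correspond to a self-conjugate partition (else $\psi|_{\AAA_n}$ would be an irreducible of $\AAA_n$ of dimension exceeding $b$), so the restriction automatically halves $\psi(1)$ back below $b$; and the three excluded ratios $1/2, 1, 2$ are exactly what is needed to rule out the boundary equality $\psi'(1) = b$ in each of the four resulting subcases.

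This gives $\varep(\AAA_n) \ge (\delta b/2)^2 / b^2 = \delta^2/4$ for all $n > N$. The finitely many alternating groups $\AAA_n$ with $5 \le n \le N$ are handled individually: each has $b(\AAA_n) > 1$ and more than one distinct character degree, so $\varep(\AAA_n) > 0$, and the minimum over these finitely many values together with $\delta^2/4$ yields the required universal constant. The main technical obstacle is the bookkeeping in the case analysis, in particular verifying that the specific choice $S = \{1/2, 1, 2\}$ eliminates every configuration in which $\psi'(1)$ could accidentally coincide with $b$.
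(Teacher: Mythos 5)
Your proposal is correct and takes essentially the same approach as the paper: apply Theorem~\ref{sym} with the exclusion set $\{1/2,1,2\}$ to an $\SSS_n$-lift of a maximal-degree character of $\AAA_n$, then pull the resulting $\SSS_n$-character back down through the index-two Clifford theory. The paper compresses your four-way case analysis into the single identity $\rho(1)/\chi(1) = (\psi(1)/\phi(1)) \cdot (r/s)$ with $r,s\in\{1,2\}$, observing that this product cannot equal $1$ precisely because $s/r\in S$ while $\psi(1)/\phi(1)\notin S$, but the content is identical.
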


\begin{proof}
Choose $S := \{2,1,1/2\}$ and apply Theorem \ref{sym}. 
Let $\chi \in \Irr(\AAA_n)$ be of degree $b := b(\AAA_n)$ and let
$\phi \in \Irr(\SSS_n)$ be lying above $\chi$;
in particular $\phi(1) = rb$ with $r = 1$ or $2$. By Theorem \ref{sym}, there 
is some $\psi \in \Irr(\SSS_n)$ such that $\psi(1)/\phi(1) \geq \delta$ and 
$\psi(1)/\phi(1) \notin S$. Now let $\rho \in \Irr(\AAA_n)$ be lying under
$\psi$; in particular, $\rho(1) = \psi(1)/s$ with $s = 1$ or $2$. Then
$\rho(1)/\chi(1) = (\psi(1)/\phi(1)) \cdot (r/s)$, and so
$\rho(1)/\chi(1) \geq \delta/2$ and $\rho(1)/\chi(1) \neq 1$. It follows
that $\varep(\AAA_n) \geq \delta^2/4$ for $n\ge N$.
\end{proof}

\section{Comparing Unipotent Character Degrees of Simple Groups of Lie Type}

Each finite simple group $S$ of Lie type, say in characteristic $p$, has the
\emph{Steinberg character} $\St$, which is irreducible of degree $|S|_p$. We 
refer the reader to \cite{C} and \cite{DM} for this, as well as basic facts
on Deligne-Lusztig theory. The main aim of this section is the proof of the
following comparison result:

\begin{thm}   \label{thm:steinberg}
 Let $\cG$ be a simple algebraic group in characteristic $p$,
 $F : \cG \to \cG$ a Frobenius map, and $G = \cG^F$ be the corresponding 
 finite group of Lie type. Then the degree of the Steinberg character of $G$
 is strictly larger than the degree of any other unipotent character.
\end{thm}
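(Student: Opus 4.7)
The plan is to reduce the claim to a polynomial inequality in $q$ via the known explicit formulas for generic degrees of unipotent characters, and verify it type by type. For type $A$, the unipotent characters of $\GL_n(q)$ are parametrized by partitions $\lambda\vdash n$, with degrees given by the $q$-hook length formula
$$D_\lambda(q) = \frac{q^{n(\lambda)}\prod_{k=1}^n(q^k-1)}{\prod_{(i,j)\in\lambda}(q^{h(i,j)}-1)}.$$
The Steinberg character corresponds to $\lambda=(1^n)$ and has degree exactly $q^N$ with $N=\binom{n}{2}$ the number of positive roots. Using the classical identity $\sum_{(i,j)\in\lambda}h(i,j)=n(\lambda)+n(\lambda')+n$, one checks that, as a polynomial in $q$, $D_\lambda(q)$ has degree $N-n(\lambda')$ with leading coefficient $1$ and non-negative integer coefficients; since $n(\lambda')\geq 1$ for $\lambda\neq(1^n)$, a direct estimate (bounding $D_\lambda(q)$ by the geometric sum $(q^N-1)/(q-1)$) yields $D_\lambda(q)<q^N$ for all $q\geq 2$. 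The unitary case $\GU_n(q)$ follows by Ennola duality.

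For classical types $B_n$, $C_n$, $D_n$, ${}^2D_n$, unipotent characters are indexed by symbols $\Lambda$, with generic degrees given by Lusztig's formula. The Steinberg corresponds to the unique one-row symbol of the given rank and again has degree $q^N$. For any other symbol I factor the generic degree as $D_\Lambda(q)=q^{a(\Lambda)}R_\Lambda(q)$, where $a(\Lambda)$ is an explicit invariant of the symbol and $R_\Lambda(q)$ is a cyclotomic rational function; the proof then reduces to two steps: (i) show $a(\Lambda)\leq N$ with equality if and only if $\Lambda$ is the Steinberg symbol; (ii) bound $R_\Lambda(q)<q^{N-a(\Lambda)}$ uniformly in $q\geq 2$. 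A minor complication in type $D$ comes from degenerate symbols, which split into two unipotent characters each of half the nominal generic degree, but this only strengthens the inequality.

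For the exceptional types $G_2$, $F_4$, $E_6$, ${}^2E_6$, $E_7$, $E_8$, ${}^3D_4$, and the Suzuki/Ree groups ${}^2B_2$, ${}^2G_2$, ${}^2F_4$, there are only finitely many unipotent characters per type, and the generic degrees are tabulated explicitly (e.g., in Carter, Chapter 13). The inequality $D_\chi(q)<q^N$ for non-Steinberg $\chi$ then reduces to a finite collection of polynomial inequalities in $q$, each routinely verified. I expect the main obstacle to lie in the classical types, where the symbol combinatorics is intricate and uniformly bounding the cyclotomic factor $R_\Lambda$ requires care; type $A$ and the exceptionals are by comparison comparatively straightforward.
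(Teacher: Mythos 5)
Your plan differs from the paper's, but it has several real gaps, the most serious being in the type $A$ step that you regard as straightforward.

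In type $A$, you correctly observe that $D_\lambda(q)$ is a polynomial of degree $N-n(\lambda')$ with non-negative integer coefficients and leading coefficient $1$, and that $n(\lambda')\geq 1$ for $\lambda\neq(1^n)$. But the inference to ``$D_\lambda(q)\leq (q^N-1)/(q-1)$, hence $D_\lambda(q)<q^N$'' is not valid from those facts alone: that geometric-sum bound requires every coefficient to be at most $1$, and the coefficients of $D_\lambda(q)$ are in general larger. For $n=5$ and $\lambda=(3,1,1)$ one computes
$$D_\lambda(q) = q^3\,\frac{(q^3-1)(q^4-1)}{(q^2-1)(q-1)} = q^7+q^6+2q^5+q^4+q^3,$$
so a coefficient equal to $2$ already appears, and in general the sum of coefficients is $D_\lambda(1)=f^\lambda$, the number of standard Young tableaux, which grows super-polynomially with $n$. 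Non-negativity and a degree bound by themselves therefore cannot deliver $D_\lambda(q)<q^N$; something more structured is needed. The paper handles this with a node-moving recursion (Proposition \ref{prop:compGL}, using the two-sided estimate of Lemma \ref{lem:ineq}), passing from the column partition to an arbitrary $\lambda$ through intermediate partitions with controlled ratios. You would need an argument of comparable content.

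The unitary case is also not a corollary of Ennola duality. Ennola duality tells you the degree polynomial for $\GU_n(q)$ is obtained from that of $\GL_n(q)$ by substituting $q\mapsto -q$ and adjusting signs; it does not say anything about how the resulting values compare, and in particular does not tell you which character of $\GU_n(q)$ is the largest. The paper must do genuine work here: it proves that every partition of $n$ has $\lceil n/2\rceil$ distinct hooks of odd lengths $\leq 1,3,5,\ldots$ and uses this to bound each $\GU_n$ unipotent degree by the corresponding $\GL_n$ degree (Propositions \ref{prop:GLvsGU} and the preceding lemma), after which it invokes Corollary \ref{cor:StGL} together with the equality of the two Steinberg degrees.

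For the non-$A$ classical types, your outline (that $a(\Lambda)\leq N$ with equality only for the Steinberg, and that the cyclotomic factor $R_\Lambda(q)$ is bounded by $q^{N-a(\Lambda)}$) is plausible but is essentially a restatement of the theorem rather than a proof: step (ii) is precisely where all of the difficulty lives, and you give no mechanism for establishing it. The paper's Proposition \ref{prop:Stclass} does this via an explicit algorithm on symbols that increases the degree at each step while preserving rank and defect parity; some replacement for that argument would be required. The exceptional case is indeed a finite check, as you say, and agrees with the paper's treatment.
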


By the results of Lusztig, unipotent characters of isogenous groups have
the same degrees, so it is immaterial here whether we speak of groups of
adjoint or of simply connected type; moreover, all unipotent characters
have the center in their kernel, so they can all be considered as characters
of the corresponding simple group.

It is easily checked from the formulas in \cite[\S13]{C} and the data in
\cite{Lu} that Theorem~\ref{thm:steinberg} does in fact
hold for exceptional groups of Lie type. The six series of classical groups
are handled in Corollaries~\ref{cor:StGL} and~\ref{cor:StGU} and
Proposition~\ref{prop:Stclass} after some combinatorial preparations. On the
way we derive some further interesting relations between unipotent character
degrees.

\subsection{Type $\GL_n$}
For $q>1$ and $\uc=(c_1<\ldots<c_s)$ a strictly increasing sequence we set
$$[\uc]:=\prod_{i=1}^s(q^{c_i}-1)$$
and $\underline c+m:=(c_1+m<\ldots<c_s+m)$ for an integer $m$.

\begin{lem}   \label{lem:ineq}
 Let $q\ge2$, $s\ge1$.
 \begin{enumerate}
  \item[\rm(i)] $\frac{q^a-1}{q^{a-1}-1}\le\frac{q^b-1}{q^{b-1}-1}$ if and
   only if $a\ge b$.
  \item[\rm(ii)] Let $\uc=(c_1<\ldots<c_s)$ be a strictly increasing
   sequence of integers, with $c_1\ge2$. Then:
   $$q^s< \frac{[\uc]}{[\uc-1]}<q^{s+1}.$$
 \end{enumerate}
\end{lem}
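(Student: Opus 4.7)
The plan is to reduce everything to the identity
$$\frac{q^a-1}{q^{a-1}-1}=q+\frac{q-1}{q^{a-1}-1},$$
which follows from $q^a-1=q(q^{a-1}-1)+(q-1)$. For part (i), the correction term $(q-1)/(q^{a-1}-1)$ is strictly positive and strictly decreasing in $a$ (since $q\ge 2$ makes $q^{a-1}-1$ strictly increasing). Hence $f(a):=(q^a-1)/(q^{a-1}-1)$ is a strictly decreasing function of $a$, which gives the ``iff'' in (i) at once (with equality in the non-strict versions corresponding to $a=b$).

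For part (ii), write
$$\frac{[\uc]}{[\uc-1]}=\prod_{i=1}^s\frac{q^{c_i}-1}{q^{c_i-1}-1}
=\prod_{i=1}^s\left(q+\frac{q-1}{q^{c_i-1}-1}\right).$$
The lower bound $q^s<[\uc]/[\uc-1]$ is immediate because each factor strictly exceeds $q$ (the positive correction term survives since $c_i\ge 2$ makes the denominator finite and positive).

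For the upper bound I would exploit the hypothesis that the $c_i$ are strictly increasing integers with $c_1\ge 2$, which forces $c_i\ge i+1$. By part (i), this yields
$$\frac{q^{c_i}-1}{q^{c_i-1}-1}\le\frac{q^{i+1}-1}{q^i-1}$$
for every $i$. The bound of interest then reduces to the telescoping inequality
$$\frac{[\uc]}{[\uc-1]}\le\prod_{i=1}^s\frac{q^{i+1}-1}{q^i-1}=\frac{q^{s+1}-1}{q-1}=1+q+q^2+\cdots+q^s<q^{s+1},$$
the final strict inequality being trivial for $q\ge 2$. The only ``substantive'' observation in the whole argument is that the inequality from (i) turns a product indexed by the specific sequence $\uc$ into one indexed by $(2,3,\ldots,s+1)$, where the factors telescope; once that is in place both estimates are immediate.
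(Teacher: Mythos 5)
Your proof is correct and is essentially the paper's argument: both rest on the monotonicity in (i), applied with $c_i\ge i+1$ to replace each factor by $\frac{q^{i+1}-1}{q^i-1}$, after which the product telescopes to $\frac{q^{s+1}-1}{q-1}<q^{s+1}$. The only cosmetic difference is in the lower bound, which you obtain directly from each factor exceeding $q$ (via your decomposition $\frac{q^a-1}{q^{a-1}-1}=q+\frac{q-1}{q^{a-1}-1}$), whereas the paper first compares with the telescoping product over the consecutive run $(c_s-s+1,\ldots,c_s)$ and then bounds $\frac{q^{c_s}-1}{q^{c_s-s}-1}>q^s$ in one step; these are equivalent in substance.
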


\begin{proof}
The first part is obvious, and then the second follows by a $2s$-fold
application of~(i) since
$$q^s< \frac{q^{c_s}-1}{q^{c_s-s}-1}
    =\prod_{i=1}^s\frac{q^{c_s-s+i}-1}{q^{c_s-s+i-1}-1}
  \le\prod_{i=1}^s\frac{q^{c_i}-1}{q^{c_i-1}-1}
  \le \prod_{i=1}^s\frac{q^{i+1}-1}{q^i-1}=\frac{q^{s+1}-1}{q-1}<q^{s+1}.
$$
\end{proof}

We denote by $\chi_\la$ the unipotent character of $\GL_n(q)$ parametrized by
the partition $\la$ of $n$. Its degree is given by the quantized hook formula
$$\chi_\la(1)
  =q^{a(\la)}\frac{(q-1)\cdots(q^n-1)}{\prod_{h}(q^{l(h)}-1)},$$
where $h$ runs over the hooks of $\la=(a_1\ge\ldots\ge a_r)$, and
$a(\la)=\sum_{i=1}^r(i-1)a_i$ (see for example \cite[(21)]{Ol} or \cite{Ma1}).

\begin{prop}   \label{prop:compGL}
 Let $\la=(a_1\ge\ldots\ge a_{r-1}>0)\vdash n-1$ be a partition of $n-1$ and
 $\mu,\nu$ the partitions of $n$ obtained by adding a node at $(r,1)$, $(i,j)$
 respectively, where $i<r$ and $a_i=j-1$. Then for all $q\ge2$ the
 corresponding unipotent character degrees of $\GL_n(q)$ satisfy
 $$q^{-j-1}\chi_{\mu}(1)<\chi_{\nu}(1)<q^{2-j}\chi_{\mu}(1)\le\chi_{\mu}(1).$$
\end{prop}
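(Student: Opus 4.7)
The plan is to pass through the partition $\pi:=\lambda\cup\{(r,1),(i,j)\}$ of $n+1$, which contains both $\mu$ and $\nu$ as sub-diagrams, and to compute $\chi_\nu(1)/\chi_\mu(1)$ as the quotient $(\chi_\pi(1)/\chi_\mu(1))/(\chi_\pi(1)/\chi_\nu(1))$. Each of the two one-step branching ratios can be written, via the quantized hook-length formula, as a simple power of $q$ times $(q^{n+1}-1)/(q-1)$ times a product of factors $(q^h-1)/(q^{h+1}-1)$ indexed by the cells whose hook length strictly grows. The crucial geometric fact is that $(r,1)$ sits in column $1$ while $(i,j)$ sits in column $j\ge 2$ and row $i<r$, so the two additions disturb disjoint rows and columns apart from the single corner $(i,1)$, at which the hook in both $\mu$ and $\nu$ equals $h_\lambda(i,1)+1$.

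Forming the quotient, the contribution of the corner $(i,1)$ is identical on both sides and cancels exactly; after simplification one obtains
\[
\frac{\chi_\nu(1)}{\chi_\mu(1)} = q^{i-r}\cdot\frac{N_0}{N_1 N_2},
\]
where $N_0$, $N_1$, $N_2$ are products of factors $(q^c-1)/(q^{c-1}-1)$ indexed respectively by $\{h_\lambda(i',1)+1 : 1\le i'\le r-1,\, i'\ne i\}$, $\{h_\lambda(i',j)+1 : 1\le i'\le i-1\}$, and $\{h_\lambda(i,j')+1 : 2\le j'\le j-1\}$, of sizes $r-2$, $i-1$, and $j-2$ respectively. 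A short monotonicity check --- since the row lengths and column heights of $\lambda$ are weakly monotonic while the index itself is strictly monotonic --- shows that each indexing set, once sorted, forms a strictly increasing sequence of integers $\ge 2$.

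Lemma~\ref{lem:ineq}(ii) then yields $q^{r-2}\le N_0<q^{r-1}$, $q^{i-1}\le N_1<q^i$, and $q^{j-2}\le N_2<q^{j-1}$, with equality on the left only in the empty-product boundary cases $r=2$, $i=1$, or $j=2$. Substituting into the displayed formula gives $q^{-j-1}<\chi_\nu(1)/\chi_\mu(1)<q^{2-j}$; both bounds remain strict in every configuration since the upper bounds on $N_0,N_1,N_2$ are always strict (even the empty product $N=1$ satisfies $1<q^s$ for every $s\ge 1$). The final assertion $q^{2-j}\le 1$ is immediate from $j\ge 2$, itself a consequence of $a_i\ge 1$.

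The main technical obstacle is the bookkeeping in the first step: carefully identifying which hooks of $\mu$ and $\nu$ already differ from the corresponding hooks of $\lambda$ (just the single cell $(i,1)$) and verifying that its contribution cancels exactly in the quotient. Once this identity is in place, the rest of the argument is an essentially mechanical application of Lemma~\ref{lem:ineq}.
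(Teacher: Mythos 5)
Your proof is correct. The decomposition you choose — passing through the partition $\pi=\mu\cup\{(i,j)\}=\nu\cup\{(r,1)\}$ of $n+1$ and writing $\chi_\nu(1)/\chi_\mu(1)$ as a quotient of two branching ratios — is a slightly different organization from the paper's, which directly compares the hooks of $\mu$ and $\nu$ (these differ only in the first column, in row $i$, and in column $j$). Both routes land on the same monotonicity argument via Lemma~\ref{lem:ineq}(ii), and your final formula
\[
\frac{\chi_\nu(1)}{\chi_\mu(1)}=q^{\,i-r}\,\frac{N_0}{N_1N_2}
\]
with indexing sets of sizes $r-2$, $i-1$, $j-2$ is correct (I checked it on $\lambda=(2,1)$, $\mu=(2,1,1)$, $\nu=(3,1)$, where the ratio is $q^{-2}$). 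The detour through $\pi$ buys you one modest advantage: the cell $(i,1)$ is the unique cell whose hook is perturbed by both additions, and in your quotient its contribution visibly cancels (since $h_\mu(i,1)=h_\nu(i,1)$ while $h_\pi(i,1)$ is one larger), so there is no danger of double-counting it. In the direct approach this same cancellation must be tracked more carefully by hand. The bound $q^{2-j}\le 1$ at the end is, as you say, immediate from $a_i\ge 1$, i.e.\ $j\ge 2$.

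One small presentational caveat: you should make the strictness bookkeeping explicit. The upper inequality uses the non-strict lower bounds $N_1\ge q^{i-1}$, $N_2\ge q^{j-2}$ (equality in the empty cases $i=1$ or $j=2$) together with the always-strict $N_0<q^{r-1}$; the lower inequality conversely uses $N_0\ge q^{r-2}$ together with the always-strict $N_1<q^i$, $N_2<q^{j-1}$. You gesture at this correctly in the penultimate sentence, but a reader will want to see which factor supplies strictness on each side.
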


\begin{proof}
According to the hook formula, we have to consider the hooks in
$\mu,\nu$ of different lengths. These lie in the $1$st column, 
the $i$th rows and in the $j$th column. Let $\uh=(1<h_2<\ldots<h_r)$ denote
the hook lengths in the $1$st column, $\uk=(k_1<\ldots<k_{j-1})$ the hook
lengths in the $i$th row and $\ul=(l_1<\ldots<l_{i-1})$ the hook lengths in the
$j$th column of $\mu$. Write
$\uh'=(h_2<\ldots<h_r)$ and $\uk'=(0<k_1<\ldots<k_{j-1})$. Then a threefold
application of Lemma~\ref{lem:ineq}(ii) shows that
$$\begin{aligned}
  \chi_{\nu}(1)
  &=q^{a(\nu)-a(\mu)}\frac{[\uh]}{[\uh'-1]}
   \frac{[\uk]}{[\uk'+1]}\frac{[\ul]}{[\ul+1]}\chi_{\mu}(1)\\
   &< q^{-r+i}q^rq^{1-j}q^{1-i}\chi_{\mu}(1)
   =q^{2-j}\chi_{\mu}(1)
   \le \chi_{\mu}(1),
\end{aligned}$$
since $j\ge2$. The other inequality is then also immediate.
\end{proof}

Note that $\nu$ is the partition obtained from $\mu$ by moving one
node from the last row (which contains a single node) to some row higher up.
Since clearly any partition of $n$ can be reached by a finite number of
such operations from $(1)^n$, we conclude:

\begin{cor}   \label{cor:StGL}
 Any unipotent character of $\GL_n(q)$ other than the Steinberg character
 $\St$ has smaller degree than $\St$.
\end{cor}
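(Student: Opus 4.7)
The plan is to bootstrap from Proposition~\ref{prop:compGL}: each application of that proposition replaces a partition $\mu \vdash n$ having a singleton last row by a new partition $\nu$ of strictly smaller unipotent character degree, obtained by moving the bottom singleton node up to the end of some higher row. If every $\nu \vdash n$ is reachable from the column partition $(1^n)$ by a finite chain of such moves, then iterating the strict inequality of Proposition~\ref{prop:compGL} along the chain yields $\chi_\nu(1) < \chi_{(1^n)}(1)$. Since the unipotent character indexed by $(1^n)$ is the Steinberg character (its hook-formula degree is $q^{n(n-1)/2} = |G|_p$), this would give the corollary.

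I would prove the reachability claim by induction on $n - r(\nu)$, where $r(\nu)$ denotes the number of parts of $\nu$. The base case $n - r(\nu) = 0$ forces $\nu = (1^n)$ and is trivial. For the inductive step, given $\nu \ne (1^n)$, I would exhibit a predecessor $\mu$ with one more row than $\nu$ that reduces to $\nu$ via a single operation of the proposition. To construct such a $\mu$, let $i^*$ be the largest index with $a_{i^*}(\nu) \ge 2$ (such an index exists because $\nu \ne (1^n)$); then $(i^*, a_{i^*}(\nu))$ is a removable corner of $\nu$ whose column index is at least $2$. Set $\lambda := \nu \setminus \{(i^*, a_{i^*}(\nu))\} \vdash n-1$ and $\mu := \lambda \cup \{(r(\lambda)+1, 1)\}$; then $\mu$ has a singleton last row, $r(\mu) = r(\nu)+1$, and the triple $(\lambda, \mu, \nu)$ fits exactly the setup of Proposition~\ref{prop:compGL}. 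The induction hypothesis produces a chain of operations from $(1^n)$ to $\mu$, which extends by one more step to reach $\nu$.

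The main point requiring any verification is that the predecessor $\mu$ just constructed really satisfies the hypotheses of Proposition~\ref{prop:compGL}, namely that $i^* < r(\mu)$ and that adjoining a node to $\lambda$ at $(i^*, a_{i^*}(\lambda)+1)$ recovers $\nu$. Both are immediate once one observes that removing $(i^*, a_{i^*}(\nu))$ from $\nu$ does not empty row $i^*$ (since $a_{i^*}(\nu) \ge 2$), so $r(\lambda) \ge i^*$ and $a_{i^*}(\lambda) = a_{i^*}(\nu) - 1$. Granted this, the whole argument is very short; the only potential obstacle is the combinatorial bookkeeping involved in checking that the reachability procedure always stays within the scope of Proposition~\ref{prop:compGL}, which the construction above handles uniformly.
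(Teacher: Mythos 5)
Your proposal is correct and follows exactly the route the paper takes: iterate Proposition~\ref{prop:compGL} along a chain of ``move the bottom singleton node up'' operations from $(1^n)$, which indexes $\St$. The paper simply declares the reachability of any $\nu\vdash n$ from $(1^n)$ to be clear, whereas you supply an explicit induction on $n-r(\nu)$ with a concretely constructed predecessor $\mu$; the underlying argument is the same.
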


A better result can be obtained when $q\ge3$, since then the upper bound in
Lemma~\ref{lem:ineq}(ii) can be improved to $q^{s+1/2}$. In that case, `moving
up' any node in a partition leads to a smaller unipotent degree:

\begin{prop}   \label{prop:dominance}
 Let $q\ge3$, and $\nu\ne\mu$ two partitions of $n$ with $\nu\rhd\mu$ in the
 dominance order. Then the corresponding unipotent character degrees of
 $\GL_n(q)$ satisfy $\chi_{\nu}(1)<\chi_{\mu}(1)$.
\end{prop}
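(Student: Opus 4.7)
The plan is to reduce Proposition \ref{prop:dominance} to a single ``elementary'' move in the dominance order, and then to generalize the hook-length calculation of Proposition \ref{prop:compGL}, exploiting the sharper form of Lemma \ref{lem:ineq}(ii) available for $q \ge 3$.

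First I appeal to the standard combinatorial fact that the dominance order on partitions of $n$ is generated by single-box relocations: whenever $\nu \rhd \mu$, there is a chain
$$\nu = \pi^{(0)} \rhd \pi^{(1)} \rhd \cdots \rhd \pi^{(m)} = \mu$$
in which each $\pi^{(t+1)}$ is obtained from $\pi^{(t)}$ by removing a node at some position $(i_t, j_t)$ and inserting one at some position $(i'_t, j'_t)$ with $i_t < i'_t$, both $\pi^{(t)}$ and $\pi^{(t+1)}$ being partitions. Writing $\lambda^{(t)} := \pi^{(t)} \setminus \{(i_t, j_t)\} = \pi^{(t+1)} \setminus \{(i'_t, j'_t)\} \vdash n-1$ realizes each step as an instance of the setup of Proposition \ref{prop:compGL}, except that the lower corner $(i'_t, j'_t)$ need not lie in column $1$. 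So it suffices to prove the following generalization: for any $\lambda \vdash n-1$ and any two addable corners $(i, j), (i', j')$ of $\lambda$ with $i < i'$, the one-box extensions $\nu := \lambda \cup \{(i, j)\}$ and $\mu := \lambda \cup \{(i', j')\}$ of $n$ satisfy $\chi_\nu(1) < \chi_\mu(1)$ whenever $q \ge 3$.

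To establish this generalization I would redo the hook-length computation of Proposition \ref{prop:compGL}. The hook lengths of $\mu$ and $\nu$ agree for boxes outside rows $i, i'$ and columns $j, j'$, while the two new corners $(i, j) \in \nu$ and $(i', j') \in \mu$ both have hook length $1$ and cancel in the ratio. Thus the quantized hook formula expresses $\chi_\nu(1)/\chi_\mu(1)$ as $q^{a(\nu) - a(\mu)} = q^{i - i'}$ times a product of Lemma \ref{lem:ineq}(ii)-type ratios $[\uc]/[\uc \pm 1]$, one for each affected row or column. The crucial new ingredient is the sharpening $[\uc]/[\uc - 1] < q^{s + 1/2}$ valid for $q \ge 3$, announced in the paragraph preceding the proposition. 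Substituted for $q^{s+1}$ in each such factor, this saves a multiplicative $q^{-1/2}$ per application, which across the telescoping factors upgrades the non-strict conclusion $\chi_\nu(1) \le \chi_\mu(1)$ of the Proposition \ref{prop:compGL}-style argument into a strict inequality $\chi_\nu(1) < \chi_\mu(1)$.

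The main obstacle is the bookkeeping in the full generality $j' \ge 1$. In the special case $j' = 1$ of Proposition \ref{prop:compGL}, row $i'$ of $\mu$ consists of a single node and column $j'$ is the first column, so the three sequences $\uh, \uk, \ul$ of that proof capture every differing hook length. For general $j' \ge 1$, row $i'$ of $\mu$ contains $j'$ nodes and column $j'$ is an interior column, so a fourth affected line appears; one must pair the two ``removing'' factors, coming from row $i'$ and column $j'$ of $\mu$, against the two ``adding'' factors from row $i$ and column $j$ of $\nu$, verifying that the resulting product of $[\uc]/[\uc \pm 1]$ ratios has strictly negative total exponent of $q$ for $q \ge 3$. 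Iterating the resulting strict inequality along the chain $\pi^{(0)} \rhd \cdots \rhd \pi^{(m)}$ then yields Proposition \ref{prop:dominance}.
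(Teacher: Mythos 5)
Your approach---reduce $\nu\rhd\mu$ to a chain of single-box relocations and then apply the sharpened upper bound of Lemma~\ref{lem:ineq}(ii) for $q\ge 3$ to each elementary move---is exactly the paper's (admittedly terse) route, so the proposal is sound. One bookkeeping caveat: when you write out the four affected lines, the total $q$-exponent of the resulting bound is $j'-j+1\le 0$, not strictly negative (it vanishes precisely when the box moves to the adjacent column, $j=j'+1$, e.g.\ along a flat), so the strictness of $\chi_\nu(1)<\chi_\mu(1)$ must be drawn from the strict inequalities in Lemma~\ref{lem:ineq}(ii) themselves rather than from a ``strictly negative total exponent'' as you state.
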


\begin{proof}
In our situation, $\nu$ can be reached from $\mu$ by a sequence of steps of
moving up a node in a partition. Consider one such step, where the node at
position $(r,s)$ is moved to position $(i,j)$, with $j>s$. A similar estimate
as in the proof of Proposition~\ref{prop:compGL}, but with the improved
upper bound from Lemma~\ref{lem:ineq}, leads to the result.
\end{proof}

\begin{exmp}
The previous result fails for $q=2$; the smallest counterexample occurs for
$n=6$, $\mu=(2)^3\lhd\nu=(3)(2)(1)$, where $\chi_\mu(1)=5952<\chi_\nu(1)=6480$.
\end{exmp}

\subsection{Type $\GU_n$}
The analogue of Proposition~\ref{prop:compGL} is no longer true for the
unipotent characters of unitary groups, in general. Still, we can obtain a
characterization of the Steinberg character by comparing with character
degrees in $\GL_n(q)$.

\begin{prop}
 Any partition $\la$ of $n$ has $r=\lceil n/2\rceil$ distinct hooks
 $h_1,\ldots,h_r$ of odd lengths $l(h_i)\le 2i-1$, $1\le i\le r$.
\end{prop}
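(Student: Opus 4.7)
The plan is to prove this by strong induction on $n$, exploiting the beta-set / $2$-abacus encoding of partitions. I will work with the beta set $B=\{a_i+s-i:1\le i\le s\}$ of $\la$, noting that the multiset of hook lengths of $\la$ equals $\{b-c:b\in B,\ c\in\Z_{\ge 0}\setminus B,\ c<b\}$, and that a hook is of odd length precisely when the bead $b$ and the gap $c$ have opposite parities (i.e.\ lie on distinct runners of the $2$-abacus). By padding $B$ with trivial beads at the bottom we may always assume $\max B$ is as large as needed. The base case $n=1$ is immediate.

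For $n\ge 2$ I would first dispense with the case that $\la$ is a $2$-core, that is, a staircase $(K,K-1,\ldots,1)$ with $n=K(K+1)/2$: every hook of $\la$ then has odd length, the hook of length $2u-1$ appearing with multiplicity $K-u+1$, and the elementary estimate $\sum_{v=1}^{k}(K-v+1)\ge k$ for $k\le \lceil n/2\rceil$ suffices. Otherwise I pick some $b\in B$ with $b\ge 2$ and $b-2\notin B$ and pass to $\mu\vdash n-2$ with beta set $(B\setminus\{b\})\cup\{b-2\}$. The heart of the argument is an injection $\phi$ from hooks of $\mu$ into hooks of $\la$ which preserves parity and raises lengths by at most $2$, defined case by case: a hook $(b',c')$ of $\mu$ with $\{b',c'\}\cap\{b-2,b\}=\emptyset$ is sent to itself; a hook $(b-2,c')$ of $\mu$ (necessarily with $c'<b-2$) is sent to $(b,c')$ in $\la$; and a hook $(b',b)$ of $\mu$ with $b'>b$ is sent to $(b',b-2)$ in $\la$. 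In the latter two cases the length increases by exactly $2$; parities are preserved; and injectivity of $\phi$ follows from the fact that the three image types form disjoint subsets of the hooks of $\la$, distinguished by the conditions ``bead equals $b$'' and ``gap equals $b-2$''.

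By the inductive hypothesis $\mu$ possesses $r-1=\lceil(n-2)/2\rceil$ distinct odd hooks $H_1',\ldots,H_{r-1}'$ with $l_\mu(H_i')\le 2i-1$, so their $\phi$-images $h_{i+1}:=\phi(H_i')$ give distinct odd hooks of $\la$ with $l_\la(h_{i+1})\le 2(i+1)-1$. To produce the missing length-$1$ hook $h_1$, I note that the position $b-1$ of $\la$ (of opposite parity to $b$) lies either in $B$ or in its complement: in the first case $(b-1,b-2)$, and in the second case $(b,b-1)$, is a length-$1$ hook of $\la$ not lying in the image of $\phi$ (by the same disjointness analysis as above), so it may be taken as $h_1$. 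The main technical hurdle will be carefully verifying the disjointness of the three image types of $\phi$, and managing the boundary situation $b=\max B$, which is obviated by the initial padding of the beta set.
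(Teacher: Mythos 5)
Your proposal is correct and takes essentially the same route as the paper's proof: both induct on $n$ by sliding a bead $b$ with $b-2\notin B$ down the $2$-abacus to obtain $\mu\vdash n-2$, observe that this induces a parity-preserving injection from hooks of $\mu$ to hooks of $\lambda$ raising lengths by $0$ or $2$, exhibit the one extra length-$1$ hook, and dispatch $2$-cores as the base case. Your write-up merely spells out the three-case definition and disjointness analysis of the injection $\phi$ that the paper leaves tacit.
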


\begin{proof}
We proceed by induction on $n$. The result is clear for 2-cores, i.e.,
triangular partitions. Now let $\la=(a_1\le\ldots\le a_r)$ be a partition
of~$n$ which is not a 2-core, with corresponding $\beta$-set
$B=\{a_1,a_2+1,\ldots,a_r+r-1\}$. The hook lengths of $\la$ are just the
differences $j-i$ with $j\in B$, $i\notin B$, $i<j$ (see \cite[Lemma~2]{Ol}).
Since $\la$ is not a 2-core, there exists $j\in B$ with $j-2\notin B$. Let
$B'=\{j-2\}\cup B\setminus\{j\}$, the $\beta$-set of a partition $\mu$ of
$n-2$. We now compare hook lengths in $B'$ and in $B$: hooks in $B'$ from
$k>j$, $k\in B'$, to $j$ become hooks from $k$ to $j-2$ in $B$, and
hooks from $j-2$ to $k\notin B'$, $k<j-2$, become hooks from $j$ to
$k$ in $B$. In both cases, the length has increased by~2. But we have one
further new hook in $B$: either from $j$ to $j-1$ (if $j-1\notin B'$), or
from $j-1$ to $j-2$ (if $j-1\in B'$), of length~1. So indeed, in both cases
we've produced hooks of the required odd lengths in $\la$.
\end{proof}

\begin{prop}   \label{prop:GLvsGU}
 Let $\la$ be a partition of $n$. Then the degree of the unipotent
 character of $\GL_n(q)$ indexed by $\la$ is at least as big as the
 corresponding one of $\GU_n(q)$.
\end{prop}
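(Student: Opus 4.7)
The plan is to compare the Ennola-dual hook-length formulas for $\chi_\la^{\GL}(1)$ and $\chi_\la^{\GU}(1)$ directly, and then to reduce the desired inequality to exactly the statement on odd hook lengths that was just proved.

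First, I would write the $\GU$-degree in its Ennola form
$$\chi_\la^{\GU}(1)=q^{a(\la)}\frac{\prod_{i=1}^n(q^i-(-1)^i)}{\prod_h(q^{l(h)}-(-1)^{l(h)})},$$
obtained from the $\GL_n(q)$ hook formula by the substitution $q\mapsto-q$ and passing to absolute values. Forming the ratio $\chi_\la^{\GL}(1)/\chi_\la^{\GU}(1)$, every factor with even index $i$ or even hook length $l(h)$ cancels, leaving the clean expression
$$\frac{\chi_\la^{\GL}(1)}{\chi_\la^{\GU}(1)}=\prod_{\substack{1\le i\le n\\ i\text{ odd}}}\frac{q^i-1}{q^i+1}\cdot\prod_{\substack{h\in\la\\ l(h)\text{ odd}}}\frac{q^{l(h)}+1}{q^{l(h)}-1}.$$
Setting $r:=\lceil n/2\rceil$ and $f(k):=(q^k-1)/(q^k+1)$, the proposition therefore reduces to the inequality
$$\prod_{\substack{h\in\la\\ l(h)\text{ odd}}}f(l(h))\ \le\ \prod_{j=1}^{r}f(2j-1).$$

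Second, the immediately preceding proposition supplies $r$ distinct hooks $h_1,\ldots,h_r$ of $\la$ with $l(h_j)\le 2j-1$. The function $f$ is strictly increasing in $k$ and lies in $(0,1)$ for $q\ge2$, so monotonicity gives
$$\prod_{j=1}^{r}f(l(h_j))\ \le\ \prod_{j=1}^{r}f(2j-1),$$
which already matches the right-hand side above. The remaining odd hooks of $\la$, if any, contribute further factors in $(0,1)$ to the full left-hand side and only shrink the product, so the inequality persists and the proposition follows.

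I expect no real obstacle beyond the Ennola bookkeeping in Step~1: one has to verify carefully that it is precisely the odd-index factors that survive the cancellation, so that the residual quantity has exactly the shape controlled by the preceding proposition. Once that is in place, the combinatorial heart of the argument has already been supplied, and the remainder is just the monotonicity of $f$.
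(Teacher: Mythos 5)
Your proposal is correct and follows essentially the same route as the paper: write both degrees via the Ennola-dual hook formula, observe that even-index and even-hook-length factors cancel in the ratio, and then invoke the preceding proposition's $r=\lceil n/2\rceil$ odd hooks with $l(h_j)\le 2j-1$. Your use of monotonicity of $f(k)=(q^k-1)/(q^k+1)$ is just a repackaging of the paper's inequality $(q^a+1)/(q^b+1)<(q^a-1)/(q^b-1)$ for $b<a$, so there is no substantive difference.
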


\begin{proof}
It's well-known that the degree of the unipotent character of $\GU_n(q)$
indexed by $\la$ is obtained from the one for $\GL_n(q)$ by formally
replacing $q$ by$-q$ in the hook formula above and adjusting the sign. Now
let $h_1,\ldots,h_r$ denote the sequence of hooks of odd length from the
previous result. Observe that the numerators in the hook formula for $\GL_n(q)$
and $\GU_n(q)$ differ by the factor
$\prod_{i=1}^r (q^{2i-1}+1)/(q^{2i-1}-1)$.
Since $(q^a+1)/(q^b+1)<(q^a-1)/(q^b-1)$ when $b<a$, the claim now follows from
the hook formula.
\end{proof}

It seems that the only case with equality, apart from the trivial cases
$1$ and $\St$, occurs for the partition $(2)^2$ of~$4$.

Since the degree of the Steinberg character of $\GL_n(q)$ and $\GU_n(q)$ is
the same, the following is immediate from Corollary~\ref{cor:StGL} and
Proposition~\ref{prop:GLvsGU}:

\begin{cor}   \label{cor:StGU}
 Any unipotent character of $\GU_n(q)$ other than the Steinberg character
 $\St$ has smaller degree than $\St$.
\end{cor}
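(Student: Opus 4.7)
The plan is to chain together the two preceding results. The key observation, already noted in the paper just above the statement of this corollary, is that the Steinberg characters of $\GL_n(q)$ and $\GU_n(q)$ have the same degree $q^{n(n-1)/2}=|G|_p$ (both indexed by the partition $\la=(1^n)$). This reduces the problem of comparing the Steinberg degree of $\GU_n(q)$ to the other unipotent degrees of $\GU_n(q)$ to the analogous problem for $\GL_n(q)$, provided each $\GU$-degree can be bounded above by the corresponding $\GL$-degree indexed by the same partition.

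Concretely, for any partition $\la\vdash n$ with $\la\ne(1^n)$, I would apply Proposition~\ref{prop:GLvsGU} to pass from $\GU_n(q)$ to $\GL_n(q)$, then Corollary~\ref{cor:StGL} to compare the $\GL$-degree against the $\GL$-Steinberg, and finally invoke the equality of the two Steinberg degrees. This gives the chain
$$\chi_\la^{\GU}(1)\ \le\ \chi_\la^{\GL}(1)\ <\ \St_{\GL_n(q)}(1)\ =\ \St_{\GU_n(q)}(1),$$
which is exactly the desired conclusion.

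Since all the substantive combinatorial and character-theoretic work has been carried out in Corollary~\ref{cor:StGL} and Proposition~\ref{prop:GLvsGU}, there is essentially no obstacle here: the deduction is a one-line concatenation of inequalities. The only minor point to verify is that the Steinberg character is indexed by the same partition $\la=(1^n)$ under both parametrizations, which is immediate from the quantized hook formula recalled in \S3.1 (for $\la=(1^n)$ one has $a(\la)=n(n-1)/2$ and the hook-length product cancels the numerator $\prod_{i=1}^n(q^i-1)$).
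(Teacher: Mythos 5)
Your proof is correct and matches the paper's own one-line argument exactly: combine Proposition~\ref{prop:GLvsGU} with Corollary~\ref{cor:StGL} and the equality of the two Steinberg degrees. Your side remark that the Steinberg character is indexed by $\la=(1^n)$ and that the quantized hook formula then gives degree $q^{n(n-1)/2}$ is also accurate.
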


\subsection{Other classical types}
The unipotent characters of the remaining classical groups $G = G(q)$
(i.e. symplectic and orthogonal groups) are labelled by \emph{symbols},
whose definition and basic combinatorics we now recall (we refer to \cite{Ma1}
for the version of the hook formula given here). A \emph{symbol} $S=(X,Y)$
is a pair of strictly increasing sequences $X=(x_1<\ldots<x_r)$,
$Y=(y_1<\ldots<y_s)$ of non-negative integers. The \emph{rank} of $S$ is then
$$\rk(S)=\sum_{i=1}^r x_i+\sum_{j=1}^s y_j
  -\left\lfloor\left(\frac{r+s-1}{2}\right)^2\right\rfloor.$$
The symbol $S'=(\{0\}\cup (X+1),\{0\}\cup (Y+1))$ is said to be
\emph{equivalent} to $S$, and so is the symbol $(Y,X)$. The rank is constant
on equivalence classes. The \emph{defect} of $S$ is
$d(S)=||X|-|Y||$, which clearly is also invariant under equivalence. \par
Lusztig has shown that the unipotent characters of classical groups of rank~$n$
are naturally parametrized by equivalence classes of symbols of rank~$n$, with
those of odd defect parametrizing characters in type $B_n$ and $C_n$, those
of defect $\equiv0\pmod4$
characters in type $D_n$, and those of defect $\equiv2\pmod4$ characters in
type $\tw2D_n$. (Here, each so-called degenerate symbol, where $X=Y$,
parametrizes two unipotent characters in type $D_n$.) \par
The degrees of unipotent characters are most conveniently given by an
analogue of the hook formula for $\GL_n(q)$, as follows. A \emph{hook of $S$}
is a pair $(b,c)\in\NN_0^2$ with $b<c$ and either $b\notin X$, $c\in X$,
or $b\notin Y$, $c\in Y$. Thus, a hook of $S$ is nothing else but a hook
(as considered in Section~\ref{sec:sym} and for type $A$ above) of the
permutation with associated $\beta$-set either $X$ or $Y$. A \emph{cohook
of $S$} is a pair $(b,c)\in\NN_0^2$ with $b\le c$ and either $b\notin Y$,
$c\in X$, or $b\notin X$, $c\in Y$. (Note that the possibility $b=c$ for
cohooks was excluded in \cite{Ol} which led to a less smooth hook formula than
in \cite{Ma1}.) We also set
$$a(S):=\sum_{\{b,c\}\subseteq S}\min\{b,c\}-\sum_{i\ge1}\binom{r+s-2i}{2},$$
where the sum runs over all 2-element subsets of the multiset $X\cup Y$ of
entries of $S$.
The degree of the unipotent character $\chi_S$ of a finite classical group
$G=G(q)$ parametrized by $S$ is then given as
$$\chi_S(1)=q^{a(S)}\frac{|G|_{q'}}{\prod_{(b,c)\text{ hook}}(q^{c-b}-1)
            \prod_{(b,c)\text{ cohook}}(q^{c-b}+1)},$$
where the products run over hooks, respectively cohooks of $S$ (see
\cite[Bem.~3.12 and~6.8]{Ma1}). It can be
checked that this is constant on equivalence classes. It is also clear from
this that the unipotent characters in types $B_n$ and $C_n$ have the same
degrees.

\begin{lem}   \label{lem:ineq2}
 Let $q\ge2$, $s\ge1$.
 \begin{enumerate}
  \item[\rm(i)] $\frac{q^a+1}{q^{a-1}+1}\le\frac{q^b+1}{q^{b-1}+1}$ if and
   only if $a\le b$.
  \item[\rm(ii)] Let $(c_1<\ldots<c_s)$ be a strictly increasing
   sequence of integers, with $c_1>0$. Then:
   $$q^{s-1}\le q^s/2< \prod_{i=1}^s \frac{q^{c_i}+1}{q^{c_i-1}+1}<q^s.$$
 \end{enumerate}
\end{lem}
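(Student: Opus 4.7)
The plan is to mirror the proof of Lemma~\ref{lem:ineq}, adjusting for the sign change from $-1$ to $+1$.

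For part~(i), I would rewrite
$$\frac{q^a+1}{q^{a-1}+1}=q-\frac{q-1}{q^{a-1}+1},$$
from which it is immediate that this quantity is strictly increasing in $a$ for $q\ge2$. Hence the stated inequality holds if and only if $a\le b$. Note that the direction of monotonicity is reversed relative to Lemma~\ref{lem:ineq}(i), as one would expect from replacing $-1$ by $+1$ in the denominator/numerator.

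For the upper bound in part~(ii), each factor satisfies
$$\frac{q^{c_i}+1}{q^{c_i-1}+1}<\frac{q^{c_i}+q}{q^{c_i-1}+1}=q,$$
so the product is less than $q^s$. For the lower bound, since $c_1\ge1$ and the $c_i$ are strictly increasing integers one has $c_i\ge i$, and applying part~(i) to each factor gives
$$\prod_{i=1}^s\frac{q^{c_i}+1}{q^{c_i-1}+1}\ge\prod_{i=1}^s\frac{q^i+1}{q^{i-1}+1}=\frac{q^s+1}{2},$$
where the final equality is a telescoping product. Since $q\ge2$, we have $(q^s+1)/2>q^s/2\ge q^{s-1}$, which completes both of the remaining inequalities at once.

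There is essentially no technical obstacle here; the only point worth flagging is that the sharp lower bound on each factor is obtained by comparing with the smallest admissible sequence $(1,2,\ldots,s)$ via part~(i), whereupon a telescoping product yields $(q^s+1)/2$, just barely beating the target $q^s/2$. This is the direct analogue of the telescoping step used in the proof of Lemma~\ref{lem:ineq}(ii).
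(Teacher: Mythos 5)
Your proof is correct and matches the approach the paper intends when it says the argument ``is immediate and entirely similar to the one of Lemma~\ref{lem:ineq}'': part~(i) is a monotonicity check (your rewriting $\frac{q^a+1}{q^{a-1}+1}=q-\frac{q-1}{q^{a-1}+1}$ makes this transparent), and part~(ii) follows from applying (i) termwise to $c_i\ge i$ and telescoping to $(q^s+1)/2$. The only cosmetic deviation is the upper bound, where you bound each factor directly by $q$ rather than telescoping against the shifted sequence $(c_s-s+1,\ldots,c_s)$ as in the proof of Lemma~\ref{lem:ineq}; both routes are equally immediate.
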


The proof is immediate and entirely similar to the one of Lemma~\ref{lem:ineq}.

\begin{prop}   \label{prop:Stclass}
 Let $S$ be a symbol of rank~$n$, parametrizing a unipotent character of
 $G=G(q)$ of rank~$n$. Then $\chi_S(1)\le\St(1)$, where $\St$ denotes the
 Steinberg character of $G$, with equality only if $\chi_S=\St$.
\end{prop}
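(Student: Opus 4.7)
The strategy is to extend the $\GL_n$ argument (Proposition~\ref{prop:compGL} and Corollary~\ref{cor:StGL}) to the symbol combinatorics of classical groups. First I would identify the Steinberg symbol $S_{\St}$ in each of the four types: defect~$1$ for $B_n,C_n$; defect~$0$ for $D_n$; defect~$2$ for $\tw2D_n$; with the entries packed so that $a(S)$ is as large as possible and the hook/cohook product in the denominator of the degree formula is as small as possible. A direct check with the hook formula confirms $\chi_{S_{\St}}(1)=|G|_p=\St(1)$, so it suffices to show $\chi_S(1)<\chi_{S_{\St}}(1)$ for every other symbol $S$ of rank $n$ with the appropriate defect congruence.

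Next I would define elementary rank-preserving moves on symbols which drive a non-Steinberg symbol toward $S_{\St}$: given $S=(X,Y)$, locate an entry $c$ of $X$ or $Y$ that can be decreased (so $c>0$ and $c-1$ is absent from the same sequence) and a second entry $d$ that can be increased by $1$ to keep the rank unchanged; depending on type these come in three flavors (moves within one sequence, moves between the two sequences, and swaps that preserve the defect mod~$4$). For each such move producing $S'$, I would write $\chi_{S'}(1)/\chi_S(1)$ as a power of $q$ (from the change in $a(S)$) times a product of ratios of the form $(q^{c_i}-1)/(q^{c_i-1}-1)$ coming from hooks and $(q^{c_j}+1)/(q^{c_j-1}+1)$ coming from cohooks. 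Applying Lemma~\ref{lem:ineq}(ii) to the hook ratios and Lemma~\ref{lem:ineq2}(ii) to the cohook ratios, together with careful bookkeeping of how the hook-cohook multisets transform under the move, should yield a strict inequality $\chi_{S'}(1)>\chi_S(1)$ whenever the move is nontrivial.

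Iterating the elementary moves reaches $S_{\St}$ in finitely many steps, establishing the proposition. The main obstacle is the sign asymmetry between hook and cohook contributions: by Lemma~\ref{lem:ineq}(ii) a hook ratio lies in $(q^s,q^{s+1})$, while by Lemma~\ref{lem:ineq2}(ii) a cohook ratio lies in $(q^s/2,q^s)$, so the two families of factors push in opposite directions and the accounting must be arranged so that the $q^{a(S')-a(S)}$ prefactor, combined with the net change in denominator, always leans on the Steinberg side. A secondary issue is the degenerate symbols $X=Y$ in type $D_n$, which parametrize two unipotent characters of equal degree; these must be checked separately, but their defect is far from that of $S_{\St}$ and so the same elementary moves (or a direct comparison at the first step) give the needed strict inequality.
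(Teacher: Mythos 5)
Your high-level strategy—identify the Steinberg symbol for each type, then show every other symbol can be pushed toward it by rank-preserving moves that strictly increase the degree—is exactly the approach the paper takes. However, the proposal leans on a claim that is false as stated: it is \emph{not} true that every elementary rank-preserving move (decrease one entry, increase another) produces $\chi_{S'}(1)>\chi_S(1)$. The paper's core estimate for its move (replace $0$ by $1$ in $X$, and replace $b+1$ by $b$ where $b$ is the largest value with a hook $(b,b+1)$) is $\chi_{S'}(1)>q^{2b-m-2}\chi_S(1)$, where $m$ counts the entries below $b+1$. This only beats $\chi_S(1)$ when $m\le 2b-2$, i.e., when $S$ has at least $4$ ``holes''; when there are few holes the exponent goes negative and the move is useless. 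Your ``iterate the moves to reach $S_{\St}$'' step therefore breaks down precisely in the endgame, which is where the real work lies.

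The paper handles this by turning the failed estimate around: since every symbol with $\ge 4$ holes can be improved, one only needs to treat symbols with at most three holes (and with a refinement, at most two), plus the cuspidal symbols $((0,1,\ldots,x),())$ that have no holes at all and where the generic move doesn't even apply. Those remaining cases are dispatched by (i) a different move that shifts $\max X$ into $Y$ when $\max X > \max Y + 1$, and (ii) a direct computation for the two-hole symbols $((1,\ldots,x),(1,\ldots,y))$ and their variants. Your sketch does not mention cuspidal symbols, does not bound the number of holes, and does not describe the case analysis at the end; the ``careful bookkeeping'' you appeal to is exactly the missing content. The concern you flag—the opposite asymmetry of hook versus cohook ratios in Lemma~\ref{lem:ineq}(ii) vs.\ Lemma~\ref{lem:ineq2}(ii)—is the right thing to worry about, but noting the obstacle is not the same as overcoming it; the $q^{2b-m-2}$ threshold \emph{is} the resolution, and it forces the hole-counting argument you have omitted.
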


\begin{proof}
We'll describe an algorithm changing the entries of a given symbol, preserving
its rank and the parity of its defect, but increasing the corresponding
character degree, which eventually leads to the symbol parametrizing the
Steinberg character. Note that the Steinberg characters of $D_n(q)$ and
$\tw2D_n(q)$ have the same degree, so that we may switch freely between
symbols of any even defect. \par
First assume that $S=((0,1,\ldots,x),())$ is a so-called cuspidal symbol.
Then the symbol $S'=((0,\ldots,x-1),(x))$ has same rank, same $a$-value and
same parity of defect, but larger degree unless $x=0$, in which case $S$
parametrizes the trivial character of $B_0$. So from now on we may suppose
that $S$ contains at least one 'hole', that's to say, not both sequences
$X,Y$ of $S$ are complete intervals starting at~0.

First replace $S$ by an equivalent symbols such that $0\in X\cap Y$ and
$1\notin X\cap Y$. We may and will then assume that $1\notin X$. Now let
$b$ be maximal such that $S$ has a hook $(b,b+1)$. (Such an $b$ exists
since $S$ is not cuspidal.)
Let $x=|\{i\in X\mid i\le b\}|$, $y=|\{i\in Y\mid i\le b\}|$ and $m=x+y$
(the number of entries in $S$ below $b+1$). By the definition of $b$, there
are no holes in $S$ above $b$.  \par
First assume that $m\le 2b-2$ and let $S'$ be the symbol obtained from $S$ as
follows: first replace $0$ by $1$ in $X$, then replace $b+1$ by $b$ in $X$
if $(b,b+1)$ was a hook of $X$, respectively in $Y$ if it was a hook of $Y$.
Clearly the new symbol has the same rank and the same defect. We have
$a(S')=a(S)+m-1$, and the quotient of the contributions by the hooks and
cohooks can be seen to be larger than $q^{2b-2m-1}$ by Lemmas~\ref{lem:ineq}
and~\ref{lem:ineq2}, so that $\chi_{S'}(1)>q^{2b-m-2}\chi_S(1)\ge \chi_S(1)$
by our assumption on~$m$.
Thus any unipotent character degree is smaller than one for a symbol $S$ with
$m\ge 2b-1$, so there are at most three 'holes' in the entries of $S$. But
in fact, if $m=2b-1$ the above process leads to the better inequality
$\chi_{S'}(1)>q^{2b-m-1}\chi_S(1)\ge \chi_S(1)$, whence $S$ has at most two
holes. \par
Now note that if $S=(X,Y)$, with $x=\max X>\max Y+1$ and $x-1\in X$, then
$S'=(X\setminus\{x\},Y\cup\{x\})$ has at least as many holes as $S$ and
larger associated character degree, since
$\prod_{i=2}^k(q^i-1)/(q^i+1)>(q-1)/(q+1)$, so we may assume that $\max X$,
$\max Y$ differ by at most~1. \par
If $S$ has just one hole, up to equivalence it is of the form
$S=((1,\ldots,x),(0,\ldots,y))$ with $y\in\{x+1,x,x-1\}$. But such a symbol
parametrizes the Steinberg character of $G$ of type $D_n$ when $y=x-1$,
of $B_n$ (and $C_n$) when $y=x$, respectively of $\tw2D_n$ when $y=x+1$.
\par
Next assume that $S$ has two holes. If there is $b\le\max(X\cup Y)$ such that
$b\notin X\cup Y$, then we may pass to an equivalent symbol
$S=((1,\ldots,x),(1,\ldots,y))$, where without loss $y\in\{x,x-1\}$ and so
$n=x+y$. If $n$ is odd, so $x=(n+1)/2$, $y=(n-1)/2$, we have
$$\chi_S(1)=q^{a(S)}\prod_{i=1}^y\frac{q^{2(x+i)}-1}{q^{2i}-1}
  \le q^{\binom{n}{2}}\prod_{i=1}^y q^{2x+1}=q^{n^2-1}<q^{n^2}=\St(1),$$
while for $n$ even, $x=y=n/2$, we find
$$\chi_S(1)=\frac{q^{a(S)}}{q^n+1}
   \prod_{i=1}^y\frac{q^{2(x+i)}-1}{q^{2i}-1}
  \le \frac{q^{\binom{n}{2}}}{q^n+1}\prod_{i=1}^y q^{2x+1}
  =\frac{q^{n^2}}{q^n+1}<q^{n^2-n}=\St(1).$$
\par
So finally, $S$ has two holes with different values, in which case there is an
equivalent symbol $S$ obtained from $S_0:=((0,\ldots,x),(1,\ldots,y))$ by
removing at most one entry, different from~0, and $x\in\{y,y\pm1\}$. If
$x>y$ and $S,S_0$ differ in the second row, then the symbol obtained from $S$
by moving $x$ from the first to the second row has larger degree. If $x=y$ and
the two holes lie in different rows, moving one of them to the other row
generates a larger degree. In the last two remaining cases (with $x>y$ and
$|X|\le|Y|$, respectively $x=y$ and $|X|=|Y|-2$) the above procedure of
reducing the smallest and increasing the largest hole leads to a symbol with
larger degree.
\end{proof}

This completes the proof of Theorem~\ref{thm:steinberg}.

\section{Theorem \ref{main1} for Simple Groups of Lie Type}
Notice that to prove Theorem \ref{main1} we can ignore any finite number of 
non-abelian simple groups, in particular the $26$ sporadic groups (of course
one can find out the exact value of $\varep(S)$ for each of them; in particular,
one can check using \cite{Atlas} that $\varep(S)>1$ for all the sporadic
groups). Thus, in view of Corollary~\ref{alt}, it remains to prove
Theorem~\ref{main1} for simple groups of Lie type.   

We begin with some estimates:

\begin{lem}   \label{sum}
 Let $q \geq 2$. Then the following inequalities hold.
 \begin{enumerate}
  \item[\rm (i)] $\prod^{\infty}_{i=1}(1-1/q^i) > 1-1/q -1/q^2 +1/q^5 \geq \exp(-\al/q)$,
where $\al = 2\ln(32/9) \approx 2.537$.
  \item[\rm (ii)] $\prod^{\infty}_{i=2}(1-1/q^i) > 9/16$.
  \item[\rm (iii)] $\prod^{\infty}_{i=k}(1+1/q^i)$ is smaller than $2.4$
   if $k = 1$, $1.6$ if $k = 2$, $1.28$ if $k = 3$, and $16/15$ if $k = 5$.
  \item[\rm (iv)] $1 < \prod^{n}_{i=1}(1-(-1/q)^{i}) \leq 3/2$.
 \end{enumerate}
\end{lem}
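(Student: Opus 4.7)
The plan is to treat each part by elementary manipulations of infinite products, reducing to the worst case $q = 2$ throughout.

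For (i), I would invoke Euler's pentagonal number theorem
$$\prod_{i=1}^{\infty}(1 - x^i) = 1 - x - x^2 + x^5 + x^7 - x^{12} - x^{15} + x^{22} + \cdots$$
with $x = 1/q \in (0, 1/2]$; the first inequality then reduces to showing the tail $x^7 - x^{12} - x^{15} + x^{22} + \cdots$ is positive, which follows since already $x^7 > x^{12} + x^{15}$ for $x \leq 1/2$ and all subsequent groupings contribute negligibly in absolute value. For the second inequality, the constant $\al = 2\ln(32/9)$ is tuned so that equality holds at $q = 2$ (where both sides equal $9/32$). Writing $g(x) = \log(1 - x - x^2 + x^5) + \al x$, one has $g(0) = g(1/2) = 0$, $g'(0) > 0$, and $g'(1/2) < 0$; verifying that $g'$ has only one zero in $(0, 1/2)$ shows $g$ is unimodal there, hence non-negative. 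Part (ii) then follows from (i) by dividing by $1 - 1/q$: the quotient simplifies to $1 - q^{-2} - q^{-3} - q^{-4}$, which is strictly increasing in $q$ with value $9/16$ at $q = 2$.

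For (iii), each product is manifestly decreasing in $q$, so it suffices to check $q = 2$. The plan is to compute the partial product $\prod_{i=k}^{N}(1 + 2^{-i})$ explicitly for $N$ moderate (say $7$ or $8$) and bound the tail by $\exp\bigl(\sum_{i > N} 2^{-i}\bigr) = \exp(2^{-N})$; this yields the four claimed bounds with just enough room. For (iv), both bounds come from pairing consecutive factors: grouping $(1 + q^{-(2k-1)})(1 - q^{-2k}) = 1 + q^{-(2k-1)}(1 - 1/q - q^{-2k})$ shows each such pair exceeds $1$ (since $1 - 1/q \geq 1/2 > q^{-2k}$), and any leftover odd-indexed factor $1 + 1/q^n$ is also $> 1$, giving the lower bound. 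For the upper bound, I would isolate the first factor $1 + 1/q \leq 3/2$ and pair the remaining factors as $(1 - q^{-2k})(1 + q^{-(2k+1)}) = 1 - q^{-2k}(1 - 1/q) - q^{-(4k+1)} \leq 1$.

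The main obstacle is part (iii), where the stated numerical bounds lie close to the true values at $q = 2$—for example $\prod_{i \geq 5}(1 + 2^{-i}) \approx 1.064$ versus the target $16/15 \approx 1.067$. This forces one to push the explicit partial product out far enough before the exponential tail estimate is applied, but it presents no conceptual difficulty.
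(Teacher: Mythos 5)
Your argument is correct and parts (i) and (iv) match the paper's proof essentially verbatim: Euler's pentagonal number theorem truncated after the $x^5$ term, the observation that equality between $1-x-x^2+x^5$ and $e^{-\alpha x}$ is engineered at $x=0$ and $x=1/2$, and the pairwise grouping of alternating factors in (iv) all appear in the paper (though the paper establishes $f(x)=1-x-x^2+x^5-e^{-\alpha x}\ge 0$ via concavity, $f''<0$ on $[0,1/2]$, which is a touch cleaner to check than your unimodality claim about $g'$, since $-2+20x^3\le 1/2 < \alpha^2 e^{-\alpha/2}$ is immediate). Where you genuinely diverge is in (ii) and (iii). For (ii) you divide the inequality of (i) through by $1-1/q$ and simplify to the monotone expression $1-q^{-2}-q^{-3}-q^{-4}$, which is a tidy closed-form derivation; the paper instead just specializes to $q=2$ and reuses the value $9/32$ from (i)---both one-liners, yours slightly more self-contained. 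For (iii) your route---reduce to $q=2$ by monotonicity, compute a finite partial product, and bound the tail by $\exp\bigl(\sum_{i>N}2^{-i}\bigr)=\exp(2^{-N})$---is a direct numerical attack; the paper instead uses the identity $\prod_{i\ge1}(1+q^{-i})=\prod_{i\ge1}(1-q^{-2i})/\prod_{i\ge1}(1-q^{-i})$ and feeds the pentagonal-number estimates back in at $q=4$ and $q=2$, then divides off initial factors for larger $k$. Your method requires carrying the partial product to $N\approx 7$ because (as you correctly flag) the target $16/15$ for $k=5$ leaves only about $0.003$ of slack at $q=2$; the paper's method produces a single sharp constant $0.6876/0.2887<2.382$ once and for all and then peels off factors, which is arguably less fiddly. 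Both work; there is no gap in your plan.
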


\begin{proof}
(i) As mentioned in \cite{FG} (see the paragraph after Lemma 3.4 of \cite{FG}), 
a convenient way to prove these estimates is to use Euler's pentagonal
number theorem \cite[p. 11]{A}:
\begin{equation}\label{euler}
  \begin{array}{ll}\prod^{\infty}_{i=1}(1-\frac{1}{q^i}) & 
  = 1 + \sum^{\infty}_{n=1}(-1)^n(q^{-n(3n-1)/2} + q^{-n(3n+1)/2})\\
  & = 1 - q^{-1} - q^{-2} + q^{-5} + q^{-7} - q^{-12} - q^{-15} + \cdots
  \end{array}
\end{equation}
Since $q^{-m} \geq \sum^{\infty}_{i=m+1}q^{-i}$, finite partial sums of this
series yield arbitrarily accurate upper and lower bounds for
$\prod^{\infty}_{i=1}(1-q^{-i})$. In particular, truncating the series
(\ref{euler}) at the term $q^{-5}$ yields the first inequality. Next, consider
the function
$$f(x) := 1-x-x^2+x^5 - \exp(-\al x)$$ 
for the chosen $\al$. The choice of $\al$ ensures that $f(1/2) = 0 = f(0)$,
and $f''(x) < 0$ for all $x\in [0,1/2]$. It follows that $f(x) \geq 0$ on
$[0,1/2]$, yielding the second inequality.    

(ii) Clearly, 
$$\prod^{\infty}_{i=2}(1-\frac{1}{q^{i}}) \geq \prod^{\infty}_{i=2}(1-\frac{1}{2^{i}})
  = 2\prod^{\infty}_{i=1}(1-\frac{1}{2^{i}}) > 
  2(1-\frac{1}{2}-\frac{1}{4}+\frac{1}{32}) = \frac{9}{16}.$$

(iii) Applying (\ref{euler}) with $q = 4$ and truncating the series at the term
$q^{-7}$ we get $\prod^{\infty}_{i=1}(1-4^{-i}) < 0.6876$. Applying (\ref{euler}) 
with $q = 2$ and truncating the series at the term $q^{-15}$ we get 
$\prod^{\infty}_{i=1}(1-2^{-i}) > 0.2887$. Now 
$$\prod^{\infty}_{i=1}(1+\frac{1}{q^{i}}) \leq \prod^{\infty}_{i=1}(1+\frac{1}{2^{i}})
  = \frac{\prod^{\infty}_{i=1}(1-\frac{1}{4^{i}})}
  {\prod^{\infty}_{i=1}(1-\frac{1}{2^{i}})} < \frac{0.6876}{0.2887} < 2.382.$$
The other bounds can be obtained by using this bound and noting that
$$\prod^{\infty}_{i=k}(1+\frac{1}{q^{i}}) \leq \prod^{\infty}_{i=k}(1+\frac{1}{2^{i}})
  = \frac{\prod^{\infty}_{i=1}(1+\frac{1}{2^{i}})}
  {\prod^{k-1}_{i=1}(1+\frac{1}{2^{i}})}.$$  

(iv) follows from the estimates 
$$(1-\frac{1}{q^{2k}})(1+\frac{1}{q^{2k+1}}) < 1 < 
   (1+\frac{1}{q^{2k-1}})(1-\frac{1}{q^{2k}})$$ 
for any $k \geq 1$. 
\end{proof}

First we prove Theorem~\ref{main1} for the exceptional groups of Lie type:

\begin{prop}   \label{exc}
 There is a constant $\varep > 0$ such that $\varep(S) \geq \varep$ for 
 all simple exceptional groups $S$ of Lie type.
\end{prop}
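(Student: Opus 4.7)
Since there are only finitely many isogeny types of simple exceptional groups of Lie type (namely $G_2$, $F_4$, $E_6$, $\tw2E_6$, $E_7$, $E_8$, $\tw3D_4$, $\tw2B_2$, $\tw2G_2$, $\tw2F_4$), it suffices to establish $\varep(G)\ge\varep$ uniformly in $q$ for each fixed type as $q\to\infty$; the remaining finitely many small-$q$ groups can then be treated by direct inspection, for instance via the CHEVIE library of generic character tables or the Atlas.

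The first step is an upper bound of the form $b(G)\le B\cdot |G|_p$, with $B$ depending only on the type. By Lusztig's Jordan decomposition of characters, every $\chi\in\Irr(G)$ has
$$\chi(1)=\frac{|\cG^F|_{p'}}{|C_{\cGD}(s)^F|_{p'}}\,\psi(1)$$
for some semisimple $s\in\cGD^F$ (up to conjugacy) and some unipotent character $\psi$ of $C_{\cGD}(s)^F$. Applying Theorem~\ref{thm:steinberg} to the centralizer gives $\psi(1)\le |C_{\cGD}(s)^F|_p$, and combining with Lemma~\ref{sum} over the finite list of possible centralizer types yields $\chi(1)\le B|G|_p$.

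The second step exhibits many characters of degree strictly less than $b(G)$ but still of order $|G|_p$. For a regular semisimple $s\in\cGD^F$ with centralizer $T^*=C_{\cGD}(s)$ a maximal torus, the Deligne--Lusztig character $\pm R_T^G(\hat s)$ is irreducible (up to sign) and of degree $|\cG^F|_{p'}/|T^F|$, where $T$ is the $F$-stable maximal torus of $\cG$ dual to $T^*$. As $T$ runs over the $F$-conjugacy classes of maximal tori (indexed by the $F$-conjugacy classes in the Weyl group), these degrees are distinct polynomials in $q$, each of $q$-degree equal to the number $N$ of positive roots, hence all of order $|G|_p$. Pick a torus type $T_0$ whose polynomial is not the maximum; then the $\sim q^{\rank}$ regular semisimple classes whose centralizer has type $T_0$ contribute $\sim q^{\rank}$ distinct irreducible characters of degree $\ge c|G|_p$ and $<b(G)$, giving
$$\varep(G)\ge\frac{q^{\rank}\cdot(c|G|_p)^2}{(B|G|_p)^2}=\frac{c^2 q^{\rank}}{B^2}\longrightarrow\infty$$
as $q\to\infty$, which comfortably produces the required uniform $\varep$ for $q$ large.

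The main obstacle is the case-by-case verification, for each of the ten types, that (a) the possible centralizers $C_{\cGD}(s)^F$ can be enumerated precisely enough to obtain an explicit $B$, and (b) a rational torus type $T_0$ exists with the required ``strictly second-best'' property. Both are finite tasks because the rank is at most $8$, so only finitely many $W$-conjugacy classes arise, but the bookkeeping is not automatic---especially for the twisted types, where the action of $F$ on $W$ and the consequent fusion of tori demand extra care.
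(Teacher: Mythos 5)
Your overall strategy is sound in outline but is genuinely different from, and substantially heavier than, the paper's argument, and it still leaves real gaps.

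Where you agree with the paper is in the first step: obtaining an upper bound of the shape $b(G)\le B\,|G|_p$ with $B$ depending only on the type. The paper does this crudely but effectively by inspecting L\"ubeck's explicit lists of degree polynomials for the adjoint groups, which bypasses Jordan decomposition entirely. Your route via Jordan decomposition plus Theorem~\ref{thm:steinberg} is plausible, but you gloss over the fact that when you work on the dual side of a simply connected $\cG$ (i.e. $\cGD$ adjoint) the centralizers $C_{\cGD}(s)$ can be disconnected; Theorem~\ref{thm:steinberg} is stated for connected reductive groups, and one needs the small extra argument handling the finite component group, exactly as the paper spells out in the proof of Theorem~\ref{bound5}. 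This is a genuine, if small, gap in your step~1.

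Where you diverge from the paper is the second step. The paper needs only \emph{one} witness character $\rho$ with $\delta b(S)\le\rho(1)<b(S)$, because then $\varep(S)\ge\delta^2$. It splits into two trivial cases: if $b(S)\ne \St(1)$ it takes $\rho=\St$; if $b(S)=\St(1)=q^N$ it takes a semisimple character of the adjoint group attached to a regular semisimple element of the simply connected dual (its degree is prime to $p$, hence automatically $\ne q^N$), and restricts to $S$ at the cost of a factor $\le 3$. This sidesteps all the bookkeeping about Weyl-group fusion of tori, counting regular semisimple classes, and identifying a ``second-best'' torus that you flag as the main obstacle. In contrast, your construction of $\sim q^{\rank}$ Deligne--Lusztig characters is the method the paper actually reserves for classical groups of \emph{unbounded} rank (Theorem~\ref{clas}), where a single character cannot save you; for bounded rank it is overkill. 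Moreover your assertion that these degrees are ``distinct polynomials in $q$'' is false in general (different $F$-conjugacy classes in $W$ can give identical order polynomials), and your claim that the chosen degrees are strictly below $b(G)$ is not free --- for large $q$ it follows from Seitz's Theorem~\ref{bound1}, which you do not invoke.

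Finally, note that once you have $b(G)\le B|G|_p$, your entire torus construction is unnecessary: the trivial identity $\sum_{\chi}\chi(1)^2=|G|$ gives
$$\varep(G)+1\;\ge\;\frac{|G|}{b(G)^2}\;\ge\;\frac{|G|_{p'}}{B^2\,|G|_p}\;\gg\;\frac{q^{\rank}}{B^2}\;\longrightarrow\;\infty$$
for each fixed type as $q\to\infty$. So if you insist on letting $q\to\infty$ rather than following the paper's uniform one-character argument, you can reach your conclusion with no Deligne--Lusztig theory at all; the construction of explicit regular semisimple characters is a detour.
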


\begin{proof}
It suffices to show that there is some constant $\delta > 0$ such that 
every simple exceptional group $S$ of Lie type, in characteristic say $p$, 
has an irreducible character $\rho$ such that $\delta b(S) \leq \rho(1) < b(S)$. 
In this proof, we will use the notation $^aX_r(q^a)$ to indicate the type of
$S$, where $a = 1$ for untwisted groups and $a = 2,3$ for twisted groups
(so $q$ can be irrational for the Suzuki and 
the Ree groups, and $r \leq 8$ is the rank of the underlying algebraic group). 
Let $H$ be the group of adjoint type corresponding to $S$; 
in particular, $S = [H,H]$, and $|H/S| \leq 3$. 
The list of character degrees of $H$ is given on Frank L\"ubeck's 
website \cite{Lu}. A detailed inspection of this list would yield an explicit 
$\delta$ (and hence $\varep$), but we will give a short, implicit proof.
One can see that there is a finite list of monic polynomials 
$f_i(q)$, $1 \leq i \leq n_0$ (which are divisors of cyclotomic 
polynomials $\Phi_m(q^a)$ with $m \leq 30$) such that 
every character degree $f(q)$ of $H$ is $c(f)$ times a product of some
powers of these $f_i(q)$, where $c(f)$ is the leading coefficient for $f$. 
It follows that there is an absolute constant $\beta > 0$ such that 
$f(q) \leq \beta q^{\deg(f)}$ for all such $f$ and all $q$. If $|H|_p = q^N$,
then $\deg(f) \leq N$ (as one can easily check). Hence we have shown that 
$b(H) \leq \beta q^N$.

First assume that $b(S)$ is attained at some character $\chi$ of $S$ of 
degree $\neq q^N = \St(1)$. Then we can set $\rho := \St$ and observe that 
$$1 > \rho(1)/\chi(1) = q^N/b(S) \geq q^N/b(H) \geq 1/\beta.$$ 
Now assume that $b(S) = q^N$. Then the finite group $L$ of 
simply connected type corresponding to
$S$ contains a regular semisimple element $t$. Consider the semisimple character
$\chi_t$ of $H$ labeled by the conjugacy class of $t$ (recall that 
$H$ is adjoint, so the centralizer of $t$ in the underlying algebraic 
group is connected). Then 
$C_L(t)$ is a maximal torus of $L$, and so $|C_L(t)| \leq (q+1)^r$. Since 
$\chi_t(1) = |H|_{p'}/|C_L(t)|$ and $r \leq 8$, we see that there is a universal
constant $\gam > 0$ such that $\chi_t(1) > \gam q^N$ for all $q$. Now let 
$\rho \in \Irr(S)$ lie below $\chi_t$. Since $\chi_t(1)$ is coprime to $p$,
$\rho(1) \neq q^N = b(S)$. Finally,
$\rho(1) \geq \chi_t(1)/|H/S| \geq \chi_t(1)/3$, whence 
$\rho(1)/b(S) \geq \gam/3$.             
\end{proof}

The same proof as above establishes Theorem \ref{main1} for simple classical
groups of \emph{bounded} rank. To handle simple classical groups of arbitrary
rank, we will need the following result of G. M. Seitz \cite[Theorem 2.1]{S}:

\begin{thm}[Seitz]   \label{bound1}
 Let $\cG$ be a simple, simply connected algebraic group over the algebraic
 closure of a finite field of characteristic~$p$,
 $F~:~\cG \to \cG$ a Frobenius endomorphism of $\cG$, and let 
 $L := \cG^F$. Then $b(L) \leq |L|_{p'}/|T_0|$, where $T_0$ is a maximal torus
 of $L$ of minimal order. For $q$ sufficiently large, this is in fact an
 equality (namely, whenever that torus contains at least one regular element).
\end{thm}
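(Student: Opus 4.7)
The plan is to combine Lusztig's Jordan decomposition of characters with Theorem~\ref{thm:steinberg}. Since $\cG$ is simply connected, $\cG^*$ is of adjoint type; every irreducible character $\chi$ of $L=\cG^F$ lies in a Lusztig series $\cE(L,(s))$ labelled by a semisimple conjugacy class $[s]$ in $L^*=\cG^{*F}$. Writing $C^\circ:=C_{\cG^*}(s)^\circ$, the Jordan decomposition yields the degree formula
\[
  \chi(1)\;=\;\frac{|L|_{p'}}{|(C^\circ)^F|_{p'}}\cdot\psi(1),
\]
where $\psi$ is a unipotent character of $(C^\circ)^F$ (with a harmless adjustment coming from the component group when $C_{\cG^*}(s)$ is disconnected). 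Applying Theorem~\ref{thm:steinberg} to each simple factor of $C^\circ$ bounds $\psi(1)$ by the Steinberg degree $|(C^\circ)^F|_p$, and hence
\[
  \chi(1)\;\leq\;\frac{|L|_{p'}\cdot|(C^\circ)^F|_p}{|(C^\circ)^F|_{p'}}.
\]

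To deduce $\chi(1)\le|L|_{p'}/|T_0|$ it therefore suffices to prove the intrinsic inequality $|T_0|\cdot|(C^\circ)^F|_p\leq|(C^\circ)^F|_{p'}$. Choose any $F$-stable maximal torus $T_C$ of $C^\circ$; this is simultaneously an $F$-stable maximal torus of $\cG^*$ and so, by duality, corresponds to an $F$-stable maximal torus of $\cG$ of order at least $|T_0|$. It therefore suffices to show the reductive-group inequality
\[
  |T_C^F|\cdot|(C^\circ)^F|_p\;\leq\;|(C^\circ)^F|_{p'},
\]
i.e.\ that $(C^\circ)^F$ has some semisimple character of degree at least its Steinberg degree. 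One proves this by expanding the quotient $|(C^\circ)^F|_{p'}/|T_C^F|$ as a product $\prod_d\Phi_d(q)^{n_d(C^\circ)-n_d(T_C)}$ of cyclotomic polynomials and using the elementary estimate $(q^a-1)/(q^{a-1}-1)\ge q^{a-1}$ (a variant of Lemma~\ref{lem:ineq}(ii)) to produce the lower bound $q^{N_{C^\circ}}$ via a type-by-type check over the reductive classification of $C^\circ\subseteq\cG^*$.

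For the asserted equality when $q$ is large, I would exhibit a regular semisimple element $t\in\cG^{*F}$ whose $\cG^*$-centralizer is the torus $T_0^*$ dual to $T_0$ (which has the same order as $T_0$). Such a $t$ exists once $(T_0^*)^F$ contains a regular element, which holds for $q$ sufficiently large since the non-regular locus in $T_0^*$ is a proper closed subvariety. The associated semisimple character $\chi_t$ then has degree exactly $|L|_{p'}/|T_0|$, matching the upper bound. The main obstacle is the polynomial inequality displayed above: asymptotically in $q$ both sides have the same degree and leading coefficient $1$, so the inequality is tight, and establishing it uniformly in $q$ and over all connected reductive subgroups $C^\circ\subseteq\cG^*$ requires a careful bookkeeping of cyclotomic contributions, which is where the technical content of Seitz's original proof is concentrated.
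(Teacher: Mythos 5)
This theorem is quoted from Seitz (\cite[Theorem 2.1]{S}) and the paper gives no proof of it, so there is no ``paper's own proof'' to compare yours against; what follows is an assessment of your outline on its own terms. Your route --- Jordan decomposition plus Theorem~\ref{thm:steinberg} plus an order inequality --- is a legitimate way to re-derive Seitz's bound, but it is anachronistic relative to Seitz's 1990 argument (Theorem~\ref{thm:steinberg} is one of the new results of \emph{this} paper, so Seitz could not have used it), and several of your ``harmless'' waypoints actually carry real content.

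The most serious gap is the treatment of disconnected centralizers. Since $\cG$ is simply connected, $\cG^*$ is adjoint and $C:=C_{\cG^*}(s)$ is quite often disconnected. Jordan decomposition then gives $\chi(1)=\bigl(|L|_{p'}/|C^F|_{p'}\bigr)\psi(1)$ with $\psi$ a unipotent character of the \emph{disconnected} group $C^F$, not of $(C^\circ)^F$. Bounding $\psi(1)$ via Theorem~\ref{thm:steinberg} (which is stated for connected simple groups) introduces a factor of up to $[C^F:(C^\circ)^F]$, and the $p'$-part in the denominator simultaneously changes from $|(C^\circ)^F|_{p'}$ to $|C^F|_{p'}$. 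Whether these extra factors cancel cleanly is exactly the kind of thing that must be checked and not waved away --- it depends on $[C^F:(C^\circ)^F]$ being prime to $p$, which is true but needs the standard identification of $C/C^\circ$ with a subgroup of $Z(\hat\cG^*)$, and even then the bookkeeping must be done. A second, smaller point: Theorem~\ref{thm:steinberg} concerns simple groups; to apply it to $(C^\circ)^F$ you need to factor $C^\circ$ into its central torus and simple components, note that unipotent characters and the Steinberg character factor accordingly, and carry the bound across --- routine, but it belongs in the proof. Finally, your reduction to the inequality $|T_C^F|\cdot|(C^\circ)^F|_p\le|(C^\circ)^F|_{p'}$ is correct \emph{provided} you pick $T_C$ to be a minimal-order maximal torus of $C^\circ$ (any choice gives the deduction since $|T_C^F|\ge|T_0|$, but only the minimal one makes the inequality plausible), and this inequality, uniformly over all reductive $C^\circ\subseteq\cG^*$ and all $q$, is the entire technical heart of the theorem; deferring it to ``careful bookkeeping of cyclotomic contributions'' means your write-up establishes the reduction but not the result.
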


In what follows, we will view our simple classical group $S$ as $L/Z(L)$,
where $L = \cG^F$ as in Theorem \ref{bound1}. We also consider the pair
$(\cGD,\FD)$ dual to $(\cG,F)$ and the group $H := (\cGD)^{\FD}$ dual to $L$.

\begin{thm}   \label{clas}
 Let $S$ be a finite simple classical group. Suppose that $S$ is not 
 isomorphic to any of the following groups:
 $$\left\{\begin{array}{l}
  \SL_n(2),~\Sp_{2n}(2),~\Omega^{\pm}_{2n}(2),\\
  \PSL_n(3) \mbox{ with }5 \leq n \leq 14,~
  \PSU_n(2) \mbox{ with }7 \leq n \leq 14,\\
  \PSp_{2n}(3) \mbox{ or } \Omega_{2n+1}(3) \mbox{ with }4 \leq n \leq 17,
  ~\POm^{\pm}_{2n}(3) \mbox{ with }4 \leq n \leq 30,\\
  \POm^{\pm}_{8}(7), ~\POm^{\pm}_{2n}(5) \mbox{ with }4 \leq n \leq 6.
  \end{array}\right.$$
 Then $\varep(S) > 1$.
\end{thm}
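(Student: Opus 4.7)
The plan is to combine Seitz's bound (Theorem~\ref{bound1}) with the explicit orders of finite classical groups, reducing $\varep(S)>1$ to a concrete numerical inequality in $q$ and $n$. Writing $m(S)$ for the number of irreducible characters of $S$ of degree $b(S)$, we have
$$\varep(S)\;=\;\frac{|S|-m(S)b(S)^2}{b(S)^2}\;=\;\frac{|S|}{b(S)^2}-m(S),$$
so it suffices to establish $|S|/b(S)^2>m(S)+1$. Since $b(S)\le b(L)\le |L|_{p'}/|T_0|$ for a minimal torus $T_0$ of $L=\cG^F$ by Theorem~\ref{bound1}, and $|S|=|L|/|Z(L)|$, we obtain the key inequality
$$\frac{|S|}{b(S)^2}\;\ge\;\frac{|L|_p\cdot |T_0|^2}{|Z(L)|\cdot |L|_{p'}}.$$

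I would next treat each of the six classical families ($\SL_n$, $\SU_n$, $\Sp_{2n}$, $\Spin_{2n+1}$, $\Spin^{\pm}_{2n}$) in turn. For each, $|L|_p$, $|L|_{p'}$ and $|Z(L)|$ are given by standard product formulas, and the minimum order of a maximal torus is a product of factors of the form $q^a\pm 1$ dictated by a partition of the Lie rank. Applying the Euler-type estimates of Lemma~\ref{sum} to $|L|_{p'}$ collapses the right-hand side above, up to a bounded correction, to an exponential factor in the rank whose base depends on the shape of the minimal torus. A uniform upper bound on $m(S)$ follows from the equality case of Theorem~\ref{bound1}: every character attaining $b(L)=|L|_{p'}/|T_0|$ is a semisimple character $\chi_t$ with $t$ regular in $T_0$, so $m(S)\le m(L)\le|T_0|/|W(T_0,L)|$, which is negligible compared with the dominant exponential term once the rank is moderate.

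The technical heart of the argument, and its main obstacle, is the small-$q$, small-$n$ corner. When $q=2$ in a split type, the split torus has order $(q-1)^r=1$ and Seitz's bound degenerates into the trivial $b(L)\le|L|_{p'}$; this forces $\SL_n(2)$, $\Sp_{2n}(2)$ and $\Omega^{\pm}_{2n}(2)$ out of the statement. In the other excluded families the minimum torus is no longer the fully split one: for $\SU_n(2)$, say, using Coxeter components of cycle length $2$ gives $|T_0|$ of order roughly $3^{n/2}$ instead of $3^{n-1}$, reducing the effective exponential base from $(q+1)^2/q=9/2$ down to essentially $3/2$. Analogous phenomena for $\PSL_n(3)$, $\PSp_{2n}(3)$, $\Omega_{2n+1}(3)$, $\POm^{\pm}_{2n}(3)$, $\POm^{\pm}_{2n}(5)$, and $\POm^{\pm}_8(7)$ produce bases only mildly greater than $1$, so the inequality $|S|/b(S)^2>m(S)+1$ overtakes only after $n$ crosses an explicit threshold; these thresholds cut out precisely the ranges of excluded groups listed in the statement. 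Apart from this careful book-keeping of constants in Lemma~\ref{sum} and of the correct minimum-torus orders, no further conceptual ingredient is required.
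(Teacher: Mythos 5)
The formula $\varep(S)=|S|/b(S)^2-m(S)$, where $m(S)$ is the number of characters of maximal degree, is algebraically correct, and Seitz's bound does give you a lower bound on $|S|/b(S)^2$. But the reduction "it suffices to establish $|S|/b(S)^2>m(S)+1$" is no reduction at all unless you can actually bound $m(S)$, and the step you propose for doing this is where the argument breaks. You assert that "every character attaining $b(L)=|L|_{p'}/|T_0|$ is a semisimple character $\chi_t$ with $t$ regular in $T_0$, so $m(S)\le m(L)\le|T_0|/|W(T_0,L)|$." Theorem~\ref{bound1} asserts only the degree inequality and that equality happens for $q$ large; it does not classify the characters of maximal degree, and such a classification is not straightforward when several tori share the minimal order or when several semisimple classes have a minimal-order torus as centralizer. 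More fundamentally, $m(S)\le m(L)$ only follows if $b(S)=b(L)$, and precisely in the small-$q$ regime that this theorem must handle one does not know $b(S)$ (or $b(L)$) at all — the paper explicitly flags this at the start of \S5. If $b(S)<b(L)$, the characters of $S$ of degree $b(S)$ are not counted by $m(L)$, and nothing you have said bounds their number.

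The paper avoids this trap entirely by never estimating $m(S)$. Instead, for each family it fixes a large torus $T$ (of order $q^{n-1}-1$, $q^{n-1}-(-1)^{n-1}$, $q^n-1$, or $q^n-\e$, not the minimal torus $T_0$), counts regular semisimple elements $s$ of $T$ whose associated semisimple characters $\chi_s$ are well-defined, trivial on the center, and of degree $|G|_{p'}/|T|$ strictly below $\St(1)\le b(S)$, and then checks directly that the sum of $\chi_s(1)^2$ over these explicitly constructed characters exceeds the square of the Seitz upper bound. This replaces the unknown quantities $b(S)$ and $m(S)$ with a concrete family of characters of known degree, and it also has to address the passage from the adjoint or Clifford-type group down to $S$ (splitting over the center, index in the adjoint group), issues which your sketch does not mention. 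To repair your outline, you would need either a genuine classification of all characters of degree $b(S)$ uniformly in $q$ and $n$ (not available from Theorem~\ref{bound1}), or an explicit construction like the paper's — at which point you would essentially be reproducing its proof.
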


\begin{proof}
1) First we consider the case $S = \PSL_n(q)$ with $q \geq 3$. Then
$\cG = \SL_n(\bF_q)$, $\cGD = \PGL_n(\bF_q)$,  
$L = \SL_n(q)$, $H = \PGL_n(q)$, and the maximal tori of minimal order in Theorem
\ref{bound1} are the maximally split ones, of order $(q-1)^{n-1}$. Hence
$$b(S) \leq b(L) \leq B/(q-1)^{n-1}, \mbox{ where }
  B:= |L|_{p'} = (q^2-1)(q^3-1) \cdots (q^n-1).$$
Now we consider a maximal torus $T$ of order $q^{n-1}-1$ in $H$, with full 
inverse image $\hT = C_{q^{n-1}-1} \times C_{q-1}$ in $\GL_n(q)$. We will 
show that, in the generic case, the regular semisimple elements in $T$ will
produce enough irreducible characters of $S$, all of degree less than
$b(S)$, and with the sum of squares of their degrees exceeding $b(S)^2$.

Assume $n \geq 4$.
A typical element $\hs$ of $\hT \cap L$ is $\GL_n(\bF_q)$-conjugate to
$$\diag\left(\al,\al^q, \ldots ,\al^{q^{n-2}},\al^{(1-q^{n-1})/(q-1)}\right),$$
where $\al \in \F_{q^{n-1}}^{\times}$. Let 
$$X := \left(\cup^{n-2}_{i=1}\F_{q^i}^{\times} \cup 
  \{x \in \bF_q^{\times} \mid x^{n(q-1)} = 1 \}\right) \cap \F_{q^{n-1}}^{\times}.$$
Also set $m := \lfloor (n-1)/2 \rfloor$. Then for $n \geq 6$ we have 
$n-m \geq 4$, and so  
$$|X| < \sum^{m}_{i=0}q^i +n(q-1) \leq \frac{q^{m+1}-1}{2} + n(q-1) 
 \leq \frac{q^{n-3}-1}{2} + n(q-1) < \frac{q^{n-1}-1}{2}$$ 
since $q \geq 3$. Direct calculations show that $|X| < (q^{n-1}-1)/2$ also
for $n = 4,5$. Thus there are at least $(q^{n-1}-1)/2$ elements $\al$ 
in $\F_{q^{n-1}}^{\times}$ that do not belong to $X$. Consider $\hs$ for any such 
$\al$. Then all the $n$ eigenvalues of $\hs$ are distinct, and exactly one 
of them (namely $\beta:=\al^{-(q^{n-1}-1)/(q-1)}$)
belongs to $\F_q$. Suppose that $x \in \GL_n(\bF_q)$ centralizes $\hs$
modulo $Z(\GL_n(\bF_q))$: $x\hs x^{-1} = \gam \hs$. Comparing the determinant,
we see that $\gam^n = 1$. Suppose that for some $i$ with $0 \leq i \leq n-2$, 
$\gam\beta = \al^{q^i}$. Then $(\al^{q^i})^{n(q-1)} = (\gam\beta)^{n(q-1)} = 1$,
and so $\al \in X$, a contradiction. Hence $\gam\beta = \beta$, i.e.
$\gam = 1$ and $x$ centralizes $\hs$, and clearly 
$C_{\GL_n(\bF_q)}(\hs)$ is a maximal torus. So if 
$s \in T$ is the image of $\hs$, then 
$C_{\cGD}(s) = C_{\GL_n(\bF_q)}(\hs)/Z(\GL_n(\bF_q))$ is 
connected and a maximal torus of $\cGD$; in particular,
$s$ is regular. Also, $s \in \PSL_n(q) = [H,H]$.
Hence each such $s$ defines an irreducible character $\chi_s$ of $L$, of 
degree $B/|T|$, which is trivial at $Z(L)$. So we can view $\chi_s$ as
an irreducible character of $S$. Each such $s$ has at most $q-1$ inverse images
$\hs \in \hT \cap L$. Moreover, since $|N_{H}(T)/T| = n-1$, the $H$-conjugacy 
class of $s$ intersects $T$ at $n-1$ elements. We have therefore produced
at least $(q^{n-1}-1)/2(q-1)(n-1)$ irreducible characters $\chi_s$ of $S$, each of 
degree 
$$\chi_s(1) = B/|T| = (q^2-1)(q^{3}-1) \ldots (q^{n-2}-1)(q^n-1).$$
Note that $\chi_s(1) < q^{2+3+ \ldots +(n-2)+n} = \St(1) \leq b(S)$. Hence,
to show that $\varep(S) > 1$, it suffices to verify that
$$\frac{q^{n-1}-1}{2(q-1)(n-1)} \cdot \left(\frac{B}{q^{n-1}-1}\right)^2 >
  \left(\frac{B}{(q-1)^{n-1}}\right)^2,$$
equivalently, $(q-1)^{2n-3} > 2(n-1)(q^{n-1}-1)$. The latter inequality 
holds if $q = 3$ and $n \geq 15$, or if $q = 4$ and $n \geq 5$, or if 
$q \geq 5$ and $n \geq 4$. It is straightforward to check that 
$\varep(S) > 1$ when $n = 2,3$ or $(n,q) = (4,4)$ (using \cite{GAP} for the
last case).    
       
\medskip
2) Next let $S = \PSU_n(q)$ with $n \geq 3$. Then
$\cG = \SL_n(\bF_q)$, $\cGD = \PGL_n(\bF_q)$,  
$L = \SU_n(q)$, $H = \PGU_n(q)$. The maximal tori of minimal order in Theorem
\ref{bound1} have order at least $(q^2-1)^{n/2}/(q+1)$. Hence
$$b(S) \leq b(L) \leq B(q+1)/(q^2-1)^{n/2}, \mbox{ where }
  B:= |L|_{p'} = \prod^{n}_{i=2}(q^i-(-1)^i).$$
Now we consider a maximal torus $T$ of order $q^{n-1}-(-1)^{n-1}$ in $H$, with full 
inverse image $\hT = C_{q^{n-1}-(-1)^{n-1}} \times C_{q+1}$ in $\GU_n(q)$. We will
follow the same approach as in the case of $\PSL_n(q)$.

Assume that $n \geq 4$, and moreover $q \geq 3$ if $4 \leq n \leq 7$.
A typical element $\hs$ of $\hT \cap L$ is $\GL_n(\bF_q)$-conjugate to
$$\diag\left(\al,\al^{-q}, \ldots ,\al^{(-q)^{n-2}},\al^{((-q)^{n-1}-1)/(q+1)}\right),$$
where $\al \in C_{q^{n-1}-(-1)^{n-1}} < \bF_{q}^{\times}$. 
Let $Y$ be the set of elements in $C_{q^{n-1}-(-1)^{n-1}}$ that belong to
a cyclic subgroup $C_{q^k-(-1)^k}$ of $\bF_{q}^{\times}$ for some $1 \leq k < n-1$
or have order dividing $n(q+1)$. Assume $n \geq 9$ and set 
$m:= \lfloor (n-1)/2 \rfloor$. Then $n-m \geq 5$ and so  
$$\begin{array}{ll}|Y| & \leq \sum^{m}_{i=1}(q^i-(-1)^i)+n(q+1) 
  \leq \sum^{m}_{i=0}q^i+n(q+1)  \\ \\
  & = \dfrac{q^{m+1}-1}{q-1}+n(q+1)
  \leq \dfrac{q^{n-3}-1}{2} + n(q+1) < \dfrac{q^{n-1}-1}{2}.\end{array}$$
Direct calculations show that $|Y| < (q^{n-1}-1)/2$ also for 
$4 \leq n \leq 8$ (recall that we are assuming $q \geq 3$ when $4 \leq n \leq 7$).
Thus there are at least $(q^{n-1}-(-1)^{n-1})/2$ elements of $C_{q^{n-1}-(-1)^{n-1}}$
that do not belong to $Y$. Consider $\hs$ for any such $\al$. 
Then all the $n$ eigenvalues
of $\hs$ are distinct, and exactly one of them (namely $\al^{((-q)^{n-1}-1)/(q+1)}$)
belongs to $C_{q+1} < \bF_q^{\times}$. Arguing as in the $\PSL
$-case, we see that 
if $x \in \GL_n(\bF_q)$ centralizes $\hs$ modulo $Z(\GL_n(\bF_q))$, then $x$ 
actually centralizes $\hs$, and $C_{\GL_n(\bF_q)}(\hs)$ is a maximal torus. So if 
$s \in T$ is the image of $\hs$, then 
$C_{\cGD}(s) = C_{\GL_n(\bF_q)}(\hs)/Z(\GL_n(\bF_q))$ is 
connected and a maximal torus of $\cGD$; in particular,
$s$ is regular. Also, $s \in \PSU_n(q) = [H,H]$.
Hence each such $s$ defines an irreducible character $\chi_s$ of $L$, of 
degree $B/|T|$, which is trivial at $Z(L)$. So we can view $\chi_s$ as
an irreducible character of $S$. Each such $s$ has at most $q+1$ inverse images
$\hs \in \hT \cap L$. Moreover, since $|N_{H}(T)/T| = n-1$, the $H$-conjugacy 
class of $s$ intersects $T$ at $n-1$ elements. We have therefore produced
at least $(q^{n-1}-(-1)^{n-1})/2(q+1)(n-1)$ irreducible characters $\chi_s$ of $S$, each of 
degree 
$$\chi_s(1) = \frac{B}{|T|} = 
\frac{\prod^{n}_{i=2}(q^{i}-(-1)^i)}{q^{n-1}-(-1)^{n-1}}.$$
Note that $\chi_s(1) < q^{2+3+ \ldots +(n-2)+n} = \St(1) \leq b(S)$. Hence,
to show that $\varep(S) > 1$, it suffices to verify that
$$\frac{q^{n-1}-(-1)^{n-1}}{2(q+1)(n-1)} \cdot \left(\frac{B}{(q^{n-1}-(-1))^{n-1}}
  \right)^2 >
  \left(\frac{B(q+1)}{(q^2-1)^{n/2}}\right)^2,$$
equivalently, $(q^2-1)^n > 2(n-1)(q^{n-1}-(-1)^{n-1})(q+1)^3$. The latter 
inequality holds if $q = 2$ and $n \geq 15$, or if $q = 3$ and $n \geq 6$, or if 
$q \geq 4$ and $n \geq 4$. It is straightforward to check that 
$\varep(S) > 1$ when $n = 3$ (and $q \geq 3$), or $(n,q) = (4,2)$, $(5,2)$, 
$(6,2)$, $(4,3)$, $(5,3)$
(using \cite{Lu} in the last case).    

\medskip
3) Here we consider the case $S = \PSp_{2n}(q)$ or $\Omega_{2n+1}(q)$ with 
$n \geq 2$ and $q \geq 3$ (and $q$ is odd in the $\Omega$-case). Then 
$L = \Sp_{2n}(q)$, resp. $L = \Spin_{2n+1}(q)$. 
The maximal tori of minimal order 
in Theorem \ref{bound1} have order at least $(q-1)^n$. Hence
$$b(S) \leq b(L) \leq B/(q-1)^n, \mbox{ where }
  B:= |L|_{p'} = \prod^{n}_{i=1}(q^{2i}-1).$$  
To simplify the computation, we will view $S$ as a normal subgroup of index 
$\leq \kappa := \gcd(2,q-1)$ of the Lie-type group of adjoint type 
$K := \PCSp_{2n}(q)$, resp. $K := \SO_{2n+1}(q)$. Then any semisimple element 
in the dual group $K^* = \Spin_{2n+1}(q)$, resp. $\Sp_{2n}(q)$, has 
connected centralizer (in the underlying algebraic group).  
Now we consider a maximal torus $T$ of order $q^n-1$ in $K^*$, and let 
$X$ be the set of elements in $T$ of order dividing $q^k \pm 1$ 
for some $k$ with $1 \leq k \leq n-1$. Setting $m := \lfloor n/2 \rfloor$ 
we have
$$|X| \leq \sum^{m}_{i=1}((q^i+1)+(q^i-1)) < 2\frac{q^{m+1}-1}{q-1}
  \leq q^{n-1}-1 < \frac{q^n-1}{3},$$
if $n \geq 3$. One can also check by direct computation that 
$|X| \leq (q^n-1)/2$ if $n = 2$. Hence there are
at least $(q^n-1)/2$ elements of $T$ that are regular semisimple.
Each such $s$ defines an irreducible character $\chi_s$ of $K$ of 
degree $B/|T|$. Moreover, since $|N_{K^*}(T)/T| = 2n$, the $K^*$-conjugacy 
class of $s$ intersects $T$ at $2n$ elements. We have therefore produced
at least $(q^n-1)/4n$ irreducible characters $\chi_s$ of $K$, each of 
degree 
$$\chi_s(1) = \frac{B}{|T|} = 
  \frac{\prod^{n}_{i=1}(q^{2i}-1)}{q^n-1} < q^{n^2} = \St(1) \leq b(S).$$
First we consider the characters $\chi_s$ which split over $S$. 
They exist only when $|K/S| = \kappa = 2$. Then the irreducible 
constituents of their restrictions to 
$S$ are all distinct, and the sum of squares of the 
degrees of the irreducible components of each $(\chi_s)|_S$ is $\chi_s(1)^2/\kappa$.
On the other hand, among the $\chi_s$ which are irreducible over $S$, at most 
$\kappa$ of them can restrict to the same (given) irreducible character of $S$.   
Hence, to show that $\varep(S) > 1$, it suffices to verify that
$$\frac{1}{\kappa} \cdot \frac{q^n-1}{4n} \cdot \left(\frac{B}{q^n-1}\right)^2 >
  \left(\frac{B}{(q-1)^n}\right)^2,$$
equivalently, $(q-1)^{2n} > 4\kappa n(q^n-1)$. The latter 
inequality holds if $q = 3$ and $n \geq 18$, or if $q = 4$ and $n \geq 4$, or if 
$q = 5$ and $n \geq 3$, or if $q \geq 7$ and $n \geq 2$. Using \cite{Atlas} and
\cite{GAP} one can check that $\varep(S) > 1$ when $(n,q) = (2,3)$, 
$(2,4)$, $(2,5)$, $(3,3)$, $(3,4)$.    
       
\medskip
4) Finally, we consider the cases $S = \POm^{\pm}_{2n}(q)$, where 
$n \geq 4$ and $q \geq 3$.  We set $\epsilon$ to $1$ or $-1$ 
in the split and non-split cases respectively.
Then $L = \Spin^{\pm}_{2n}(q)$, and 
the maximal tori of minimal order 
in Theorem \ref{bound1} have order at least $(q-1)^n$. Hence
$$b(S) \leq b(L) \leq B/(q-1)^n, \mbox{ where }
  B:= |L|_{p'} = (q^n-\e) \cdot \prod^{n-1}_{i=1}(q^{2i}-1).$$  
As in 3), we will view $S$ as a normal subgroup of index 
$\leq \kappa := \gcd(4,q^n-\e)$ of the Lie-type group of adjoint type 
$H := P(\CO^{\pm}_{2n}(q)^\circ)$. Then any semisimple element in the dual group
$L$ has connected centralizer.  
Now we consider a maximal torus $T$ of order $q^n-\e$ in $L$, and let 
$Y$ be the set of elements in $T$ of order dividing $q^k + 1$ or $q^k-1$
for some $k$ with $1 \leq k \leq n-1$. As in 3) we see that
$|X| \leq (q^n-\e)/3$ since $n \geq 4$. Hence there are
at least $2(q^n-\e)/3$ elements of $T$ that are regular semisimple.
Each such $s$ defines an irreducible character $\chi_s$ of $H$ of 
degree $B/|T|$. Moreover, since $|N_{L}(T)/T| = 2n$, the $L$-conjugacy 
class of $s$ intersects $T$ at $2n$ elements. We have therefore produced
at least $(q^n-\e)/3n$ irreducible characters $\chi_s$ of $H$, each of 
degree 
$$\chi_s(1) = B/|T| = 
  \prod^{n-1}_{i=1}(q^{2i}-1) < q^{n(n-1)} = \St(1) \leq b(S).$$
The restriction $(\chi_s)|_S$ contains an irreducible constituent 
$\rho_s$ of degree at least $\chi_s(1)/\kappa$. Conversely, each 
$\rho \in \Irr(S)$ can lie under at most $\kappa$ distinct irreducible
characters of $H$. 
Hence, to show that $\varep(S) > 1$, it suffices to verify that
$$\frac{1}{\kappa} \cdot \frac{q^n-\e}{3n} \cdot 
  \left(\frac{B}{\kappa(q^n-\e1)}\right)^2 >
  \left(\frac{B}{(q-1)^n}\right)^2,$$
equivalently, $(q-1)^{2n} > 3\kappa^{3}n(q^n-\e)$. The latter 
inequality holds unless $q = 3$ and $n \leq 30$, 
or if $q = 5$ and $n \leq 6$, or if $q = 7$ and $n = 4$. 
\end{proof}

Now we handle the remaining infinite families of simple classical groups
over~$\F_2$.

\begin{thm}   \label{clas2}
 If $S$ is any of the following simple classical groups over $\F_2$: 
 $$\SL_n(2), ~~\Sp_{2n}(2)', ~~\Omega^{\e}_{2n}(2),$$ 
 then one of the following statements holds:

 {\rm (i)} There exists $\psi \in \Irr(S)$ with 
 $81/512 \leq \psi(1)/b(S) < 1$;

 {\rm (ii)} $\varep(S) > 9/16$.
\end{thm}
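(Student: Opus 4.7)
My plan is to use Deligne--Lusztig theory directly, since over~$\F_2$ Seitz's bound from Theorem~\ref{bound1} degenerates (the minimum maximal torus has order~$1$) and so the counting strategy of Theorem~\ref{clas} is unavailable. The idea is to locate the Lusztig series $\cE(G,s^{*})$ containing a character of degree $b(S)$ and then substitute a ``sub-Steinberg'' character inside that series to realise~(i).

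By Corollaries~\ref{cor:StGL} and~\ref{cor:StGU} and Proposition~\ref{prop:Stclass}, the Steinberg character has the largest unipotent degree in every reductive group that arises as a centraliser $C_{G^*}(s)$. Hence, via Jordan decomposition of characters,
$$
b(S)\;=\;|G|_{p'}\cdot\max_{s}\;\frac{|C_{G^*}(s)|_p}{|C_{G^*}(s)|_{p'}},
$$
and the maximum is attained at some $s^{*}$ whose Jordan correspondent is $\St_{C_{G^*}(s^{*})}$. I would determine $s^{*}$ in each family by minimising $|C_{G^*}(s)|_{p'}/|C_{G^*}(s)|_p$: for $\SL_n(2)$ this yields a centraliser of the form $\GL_{m_0}(2)\times\prod\GL_{k_i}(2^{d_i})$ with at least one $d_i\ge 2$, while for $\Sp_{2n}(2)'$ and $\Omega^{\e}_{2n}(2)$ the optimal centraliser may additionally carry an $\Sp$- or $\SO$-factor.

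Within $\cE(G,s^{*})$ I would then replace $\St_{C_{G^*}(s^{*})}$ by the second-largest unipotent character $\chi_{*}$ of $C_{G^*}(s^{*})$, supplied by Proposition~\ref{prop:compGL} in each $\GL$-factor and by the symbol surgery from the proof of Proposition~\ref{prop:Stclass} in each classical factor. The resulting $\psi\in\Irr(S)$ satisfies
$$
\frac{\psi(1)}{b(S)}\;=\;\frac{\chi_{*}(1)}{\St_{C_{G^*}(s^{*})}(1)}\;\ge\;1-2^{-(r-1)},
$$
where $r$ is the rank of the factor supporting the substitution. As soon as $C_{G^*}(s^{*})$ has a non-torus simple factor of rank $r\ge 2$, this ratio is $\ge 1/2 > 81/512$, so (i) holds.

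The main obstacle is the degenerate case in which every non-torus factor of $C_{G^*}(s^{*})$ has rank $\le 1$, so the substitution above is unavailable. I would show that such centralisers cannot in fact witness $b(S)>\St(1)$: a semisimple character attached to a torus~$T$ has $\chi_s(1)/\St(1)=|G|_{p'}/(|T|\cdot|G|_p)$, and Lemma~\ref{sum}(ii) combined with the lower bound $|T|\ge 2^{n-1}-1$ for tori containing a regular element (realised by the partition $(n-1,1)$ in $\SL_n(2)$, with analogues for the other families) forces this ratio below~$1$. So in this case $b(S)=\St(1)$, and $\psi$ can be taken to be a near-Steinberg unipotent character of $S$ itself---$\chi_{(2,1^{n-2})}$ of degree $(1-2^{-(n-1)})\St(1)$ for $\SL_n(2)$, and the analogous symbol-surgery character for $\Sp_{2n}(2)'$ and $\Omega^{\e}_{2n}(2)$ coming from the proof of Proposition~\ref{prop:Stclass}. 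A bounded list of small-rank exceptions for each family remains, to be checked directly using \cite{Atlas,GAP,Lu}; whenever (i) genuinely fails, an explicit summation over the character table confirms~(ii), with the threshold $9/16$ emerging from the identity $|S|/b(S)^2=|G|_{p'}/|G|_p$ at $b(S)=\St(1)$ together with Lemma~\ref{sum}(ii).
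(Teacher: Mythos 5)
Your framework shares genuine ingredients with the paper's proof --- Jordan decomposition into Lusztig series, Corollaries~\ref{cor:StGL}, \ref{cor:StGU}, Proposition~\ref{prop:Stclass}, and the idea of comparing a character in $\cE(G,s^*)$ against another character of smaller degree --- but there is a fatal gap in the ``degenerate'' torus case, and a second structural problem for $\Sp$ and $\Omega$.

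The torus case is not degenerate; it is the heart of the matter. You claim that if $C_{G^*}(s^*)$ is a torus $T$, then $\chi_s(1)/\St(1)=|G|_{p'}/(|T|\cdot|G|_p)<1$, forcing $b(S)=\St(1)$. This is false for $\SL_n(2)$ once $n$ is large. For a torus corresponding to a partition $\lambda=(d_1,\ldots,d_k)$ of $n$ one has
$$\frac{\chi_s(1)}{\St(1)}=\frac{\prod_{i=1}^n(1-2^{-i})}{\prod_j(1-2^{-d_j})},$$
and by packing in parts of size $d$ with multiplicity up to $\fp_d\approx 2^d/d$ (which is allowed, since each irreducible monic $\F_2$-polynomial of degree $d$ supplies a distinct $\GL_1(2^d)$-eigenblock) the denominator tends to $0$ as $n\to\infty$. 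This is precisely the mechanism behind the lower bound of Theorem~\ref{bound4}: $b(\SL_n(2))/\St(1)\gg(\log_2 n)^{3/4}$, achieved by elements whose centralizers are (up to a bounded $\GL_m(2)$-factor) exactly such tori. Your assumption of a regularity bound $|T|\ge 2^{n-1}-1$ is also not correct --- partitions with several admissible small parts (e.g.\ combining several $\GL_1(2^d)$-blocks for $d\ge 2$) yield much smaller tori still containing regular elements --- but even setting that aside, the ratio need not be below~$1$. Consequently, when $b(S)$ is attained at a torus-type series, your ``second-largest unipotent character of $C_{G^*}(s^*)$'' does not exist (a torus has only the trivial unipotent character), your replacement $\psi$ would have to be the near-Steinberg character of $G$ itself, and then $\psi(1)/b(S)\approx \St(1)/b(S)\to 0$, violating the claimed lower bound $81/512$. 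The paper's fix is to \emph{change Lusztig series} instead of substituting within one: given $C=\GL_{k_1}(q^{d_1})\times\GL_{k_2}(q^{d_2})\times\cdots$ with $r\ge2$ factors, it finds a new semisimple $t$ whose centralizer replaces the first two factors by a single $\GL_1(q^{k_1d_1+k_2d_2})$ (this is where $81/512$ comes from), a move that works equally well when $C$ is a torus.

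A second gap concerns $\Sp_{2n}(2)'$ and $\Omega^\varepsilon_{2n}(2)$: your formula $b(S)=|G|_{p'}\max_s|C_{G^*}(s)|_p/|C_{G^*}(s)|_{p'}$ presumes that within each series the maximum is the Steinberg of $C_{G^*}(s)$ (and that this is a legitimate unipotent character of the full, typically disconnected, centralizer). The paper avoids having to prove this: it writes the unique Gelfand--Graev character $\Gamma$ of $S$ as a sum of $2^n$ regular characters, observes $\Gamma(1)=|S|_{2'}>\frac{9}{16}2^{n(n+1)}$ by Lemma~\ref{sum}(ii), and applies Cauchy--Schwarz to conclude $\sum_{(s)}\chi_{(s)}(1)^2>\frac{9}{16}b(S)^2$. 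This directly yields $\varep(S)>9/16$ unless $b(S)$ is achieved by a regular character $\chi_{(s)}$, so only that (tractable) case remains to be analysed. Your proposal never introduces this mechanism, so it has no control over the case in which the top character is not regular.
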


\begin{proof}
The ``small'' groups $\SL_3(2)$ and $\Sp_4(2)' \cong \AAA_6$ are easily handled
using \cite{Atlas}. Also set $q = 2$.

1) First we consider the case $S = \SL_n(2)$ with $n \geq 4$. 
Then the dual group is isomorphic to
$S$. In particular, any character $\chi \in \Irr(S)$ of largest degree $b(S)$ 
can be parametrized by $((s),\phi)$, where $(s)$ is the conjugacy class of 
a semisimple element $s \in S$ and $\phi$ is a unipotent character of 
the centralizer $C := C_S(s)$. Such a centralizer is isomorphic to
$$\GL_{k_1}(2^{d_1}) \times \ldots \times \GL_{k_r}(2^{d_r}),$$
where $k_i, d_i \geq 1$, $k_1d_1 \geq k_2d_2 \geq \ldots \geq k_rd_r$, and
$\sum^{r}_{i=1}k_id_i = n$. Moreover, for each $d$, the number of indices 
$i$ such that $d_i = d$ is at most the number of conjugacy classes of 
semisimple elements in $\GL_{kd}(2)$ with centralizer $\cong \GL_{k}(2^d)$, i.e.
the number of monic irreducible polynomials $f(t)$ of degree $d$ over $\F_2$. 
Since $\chi(1) = (S:C)_{2'} \cdot \psi(1)$ and
$\chi(1) = b(S)$, by Corollary~\ref{cor:StGL} $\psi$ must be the Steinberg
character $\St_C$ of $C$, and so   
$$\psi = \psi_1 \otimes \psi_2 \otimes \ldots \otimes \psi_r,$$
where $\psi_i$ is the Steinberg character of $\GL_{k_i}(q^{d_i})$, of degree
$q^{d_ik_i(k_i-1)/2}$.

Observe that $s \neq 1$, i.e. $\chi$ is not unipotent. Otherwise
$b(S) = \St(1) = q^{n(n-1)/2}$. However, the character $\rho \in \Irr(S)$ labeled by 
$((u), \St_{C_S(u)})$, where $u \in S$ is an element of order $3$ with centralizer 
$C_S(u) \cong C_3 \times \GL_{n-2}(2)$, has degree
$$\frac{(q^n-1)(q^{n-1}-1)}{3} \cdot q^{(n-2)(n-3)/2} > q^{n(n-1)/2} = b(S)$$
as $n \geq 4$, a contradiction.

Next we show that $r > 1$. Assume the contrary: $C \cong \GL_k(q^d)$ with $kd = n$
and $d > 1$. Then by Lemma \ref{sum}(ii) we have 
$$\chi(1) = q^{dk(k-1)/2} \cdot \frac{(q-1)(q^2-1) \ldots (q^n-1)}
  {(q^d-1)(q^{2d}-1) \ldots (q^{kd}-1)} < 
  q^{dk(k-1)/2} \cdot \frac{q^{n(n+1)/2-1}}{\frac{9}{16}q^{dk(k+1)/2}} = 
  \frac{8}{9}q^{n(n-1)/2}$$
as $q = 2$. Thus $\chi(1) < \St(1)$, a contradiction.

Thus we must have that $r \geq 2$. Observe that 
there is a semisimple element $t \in S$ with centralizer
$$C_S(t) \cong \GL_1(q^{k_1d_1+k_2d_2}) \times \GL_{k_3}(q^{d_3}) \times \ldots 
  \times \GL_{k_r}(q^{d_r}).$$
Choose $\psi \in \Irr(S)$ to be labeled by $((t),\St_{C_S(t)})$. Then
$$\frac{\psi(1)}{\chi(1)} = 
  \frac{\prod^{k_1}_{i=1}(q^{id_1}-1)\cdot \prod^{k_2}_{i=1}(q^{id_2}-1)}
  {q^{d_1k_1(k_1-1)/2} \cdot q^{d_2k_2(k_2-1)/2} \cdot (q^{k_1d_1+k_2d_2}-1)}.$$  
By Lemma \ref{sum}(ii), 
$1> \prod^{k_j}_{i=1}(q^{id_j}-1)/q^{d_jk_j(k_j+1)/2} > 9/32$ for $j = 1,2$
(in fact we can replace $9/32$ by $9/16$ if $d_j > 1$). Since $(d_1,d_2) \neq (1,1)$,
it follows that $1 > \psi(1)/\chi(1) > 81/512$. 

\medskip
2) Next we consider the case $S = \Sp_{2n}(2)$ with $n \geq 3$.
Then the dual group is again isomorphic to $S$, and we can view 
$S = \cG^F$ for a Frobenius map $F$ of $\cG = \Sp_{2n}(\bar{\F}_2)$. Since 
$Z(\cG) = 1$, by Corollary 14.47 and Proposition 14.42 of \cite{DM}, $S$ has a 
unique Gelfand-Graev character $\Gamma$, which is the sum of $2^n$
\emph{regular} irreducible characters $\chi_{(s)}$. Each such $\chi_{(s)}$
has Lusztig label $((s),\St_{C_S(s)})$, where $(s)$ is any semisimple class
in $S$, see e.g. \cite{H1}.

Note that $\Gamma(1) = |S|_{2'} = \prod^{n}_{i=1}(2^{2i}-1) > 2^{n(n+1)} \cdot (9/16)$, 
with the latter inequality following from Lemma \ref{sum}(ii). Hence by the 
Cauchy-Schwarz inequality we have
$$\sum_{(s)}\chi_{(s)}(1)^2 \geq \frac{(\sum_{(s)}\chi_{s}(1))^2}{2^n}  
  = \frac{(|S|_{2'})^{2}}{2^n} > \frac{9}{16} \cdot |S| 
  \geq \frac{9}{16} \cdot b(S)^2.$$
In particular, $\varep(S) > 9/16$ if $b(S)$ is not achieved by any regular 
character $\chi_{(s)}$. So we will assume that $b(S)$ is achieved by a 
regular character $\chi = \chi_{(s)}$.

According to \cite[Lemma 3.6]{TZ1}, $C:= C_S(s) = D_1 \times \ldots \times D_r$ 
is a direct product of groups of the
form $\GL^{\e}_{k}(q^d)$ (where $\e = +1$ for $\GL$ and $\e = -1$ for $\GU$) or
$\Sp_{2m}(q)$. Note that, since $q=2$, $C$ contains at most one factor of the latter 
form, and no factor of the former form with $(d,\e) = (1,1)$.  
First suppose that all of the factors $D_i$ are of the second form.
It follows that $s = 1$, $\chi_{(1)} = \St$. 
Choosing $\psi \in \Irr(S)$ to be labeled by 
$((u), \St_{C_S(u)})$, where $u \in S$ is an element of order $3$ with centralizer 
$C_S(u) \cong C_3 \times \Sp_{2n-2}(2)$, we see that
$$b(S)= \chi(1) = 2^{n^2} > \psi(1) = \frac{(2^{2n}-1)}{3} \cdot 2^{(n-1)^2} > b(S)/2$$
as $n \geq 2$.

Next we consider the case where exactly one of the factors $D_i$ is of the form
$\GL^{\e}_k(q^d)$. Then, since $q=2$ we must actually have $r \leq 2$,  
$C = \GL^{\e}_{k}(q^d) \times \Sp_{2m}(q)$ with $m := n-kd$, and $(d,\e) \neq (1,1)$.
Hence by Lemma \ref{sum}(ii)  
$$\chi(1) = q^{dk(k-1)/2+m^2} \cdot \frac{\prod^{n}_{j=m+1}(q^{2j}-1)}
  {\prod^{k}_{j=1}(q^{jd}-\e^j)} < 
  q^{dk(k-1)/2+m^2} \cdot \frac{q^{n(n+1)-m(m+1)}}{\frac{9}{16}q^{dk(k+1)/2}} 
  < \frac{16}{9}q^{n^2}.$$
It is easy to check that $\chi(1) \neq q^{n^2}$. Choosing $\psi = \St$, we 
then have $1 > \psi(1)/b(S) > 9/16$. 

Lastly, we consider the case where at least two of the factors $D_i$ are of form
$\GL^{\e}_k(q^d)$:
$$C =  \GL^{\e_1}_{k_1}(q^{d_1}) \times \GL^{\e_2}_{k_2}(q^{d_2}) \times \ldots \times 
       \GL^{\e_{r-1}}_{k_{r-1}}(q^{d_{r-1}}) \times \Sp_{2m}(q),$$
where $r-1 \geq 2$ and $m$ can be zero. We will assume that 
$k_1d_1 \geq k_2d_2 \geq \ldots \geq k_{r-1}d_{r-1} \geq 1$.  
Observe that there is a semisimple element $t \in S$ with centralizer
$$C_S(t) \cong \GU_1(q^{k_1d_1+k_2d_2}) \times \GL^{\e_3}_{k_3}(q^{d_3}) \times \ldots \times
   \GL^{\e_{r-1}}_{k_{r-1}}(q^{d_{r-1}}) \times \Sp_{2m}(q).$$
Choose $\psi \in \Irr(S)$ to be labeled by $((t),\St_{C_S(t)})$. Then
$$\frac{\psi(1)}{\chi(1)} = 
  \frac{\prod^{k_1}_{i=1}(q^{id_1}-\e_1^i)\cdot \prod^{k_2}_{i=1}(q^{id_2}-\e_2^i)}
  {q^{d_1k_1(k_1-1)/2} \cdot q^{d_2k_2(k_2-1)/2} \cdot (q^{k_1d_1+k_2d_2}+1)}.$$  
By Lemma \ref{sum}(ii), 
$\prod^{k_j}_{i=1}(q^{id_j}-\e_j^i)/q^{d_jk_j(k_j+1)/2} > 9/16$ for $j = 1,2$
since $(d_j,\e_j) \neq (1,1)$. Furthermore, since 
$k_1d_1+k_2d_2 \geq 2$ we have $q^{k_1d_1+k_2d_2}+1 \leq (5/4)q^{k_1d_1+k_2d_2}$.
Thus $\psi(1)/\chi(1) > 81/320$, and so we are done if $\psi(1) \neq \chi(1)$.
Suppose that $\psi(1) = \chi(1)$. Then $k_1=k_2=1$, $(d_1,\e_1)=(2,1)$, and
$(d_2,\e_2) = (1,-1)$. In this case we can replace $t$ by a semisimple element $t'$
with  
$$C_S(t') \cong \GL_1(q^{k_1d_1+k_2d_2}) \times \GL^{\e_3}_{k_3}(q^{d_3}) \times \ldots \times
   \GL^{\e_{r-1}}_{k_{r-1}}(q^{d_{r-1}}) \times \Sp_{2m}(q)$$
and repeat the above argument.

\medskip
3) Finally, let us consider the case $S = \Omega^{\e}_{2n}(q)$ with $n \geq 4$. Again,
the dual group of $S$ can be identified with $S$, and we can view 
$S = \cG^F$ for a Frobenius map $F$ of $\cG = \SO_{2n}(\bar{\F}_2)$. Since 
$\cG$ is simply connected (as $q=2$), the centralizers of any 
semisimple element in $\cG$ are connected. Arguing as in the $\Sp$-case,
we may assume that $b(S)$ is attained at a regular character $\chi = \chi_{(s)}$.
One can show (see also \cite[Lemma 3.7]{TZ1}) that
$$C := C_S(s) = K_1 \times H_3 \times \ldots \times H_r$$
where each $H_i$ with $3 \leq i \leq r$ is of the form $\GL^{\beta}_k(q^d)$
with $\beta = \pm 1$.
Furthermore, $K_1$ has a normal subgroup $H_1 \cong \Omega^{\pm}_{2m}(q)$ (where
$m$ can be zero) such that  
$K_1/H_1$ is either trivial, or isomorphic to $\GU_2(2)$. In the latter case, 
the Steinberg character of the finite connected reductive group $K_1$ has degree
equal to $|K_1|_2 = 2^{m(m-1)+1}$. Thus in either case we may replace $C_S(s)$ by
$$H_1 \times H_2 \times H_3 \times \ldots \times H_r$$
(where each $H_i$ is of the form $\GL^{\beta}_k(q^d)$ with $\beta = \pm 1$ or 
$\Omega^{\pm}_{2m}(q)$, and the latter form can occur for at most one factor $H_i$),
and identify the Steinberg character $\St_C$ with 
$\St_{H_1} \otimes \ldots \otimes \St_{H_r}$.

First we consider the case $s = 1$, i.e. $C = H_1 = S$, and $\chi(1) = \St$. 
Choosing $\psi \in \Irr(S)$ to be labeled by 
$((u), \St_{C_S(u)})$, where $u \in S$ is an element of order $3$ with centralizer 
$C_S(u) \cong C_3 \times \Omega^{-\e}_{2n-2}(2)$, we see that
$$b(S)= \chi(1) = 2^{n(n-1)} > \psi(1) = \frac{(2^{n}-\e)(2^{n-1}-\e)}{3} \cdot 
  2^{(n-1)(n-2)} > b(S)/4$$
as $n \geq 4$.

Next we consider the case where exactly one of the factors $H_i$ is of the form
$\GL^{\al}_k(q^d)$. Then, since $q=2$ we must actually have $r \leq 2$,  
$C = \GL^{\al}_{k}(q^d) \times \Omega^{\beta}_{2m}(q)$ with $m := n-kd$, and 
$(d,\al) \neq (1,1)$.
Hence by Lemma \ref{sum}(ii)  
\begin{align*}
\chi(1) & = 
  q^{dk(k-1)/2+m(m-1)} \cdot \dfrac{(q^m+\beta) \cdot \prod^{n}_{j=m+1}(q^{2j}-1)}
  {(q^n+\e) \cdot \prod^{k}_{j=1}(q^{jd}-\al^j)}\\
  & < q^{dk(k-1)/2+m(m-1)} \cdot \dfrac{3}{2} \cdot \dfrac{16}{15} \cdot 
    \dfrac{q^{m+ n(n+1)-m(m+1)}}{\frac{9}{16}q^{n+ dk(k+1)/2}} 
  < \frac{128}{45}q^{n(n-1)}.
\end{align*}
It is easy to check that $\chi(1) \neq q^{n(n-1)}$. Choosing $\psi = \St$ we 
then have $1 > \psi(1)/b(S) > 45/128$. 

Lastly, we consider the case where at least two of the factors $H_i$ are of the
form $\GL^{\pm}_k(q^d)$:
$$C =  \GL^{\e_1}_{k_1}(q^{d_1}) \times \GL^{\e_2}_{k_2}(q^{d_2}) \times 
       \ldots \times \GL^{\e_{r-1}}_{k_{r-1}}(q^{d_{r-1}}) \times \Omega^{\beta}_{2m}(q),$$
where $r-1 \geq 2$ and $m$ can be zero. We will assume that 
$k_1d_1 \geq k_2d_2 \geq \ldots \geq k_{r-1}d_{r-1} \geq 1$.  
Observe that there is a semisimple element $t \in S$ with centralizer
$$C_S(t) \cong \GL^{\al}_1(q^{k_1d_1+k_2d_2}) \times \GL^{\e_3}_{k_3}(q^{d_3}) \times \ldots 
  \times \GL^{\e_{r-1}}_{k_{r-1}}(q^{d_{r-1}}) \times \Omega^{\beta}_{2m}(q)$$
for some $\al = \pm 1$. Choose $\psi \in \Irr(S)$ to be labeled by 
$((t),\St_{C_S(t)})$. Then
$$\frac{\psi(1)}{\chi(1)} = 
  \frac{\prod^{k_1}_{i=1}(q^{id_1}-\e_1^i)\cdot \prod^{k_2}_{i=1}(q^{id_2}-\e_2^i)}
  {q^{d_1k_1(k_1-1)/2} \cdot q^{d_2k_2(k_2-1)/2} \cdot (q^{k_1d_1+k_2d_2}-\al)}.$$  
By Lemma \ref{sum}(ii), 
$\prod^{k_j}_{i=1}(q^{id_j}-\e_j^i)/q^{d_jk_j(k_j+1)/2} > 9/16$ for $j = 1,2$
since $(d_j,\e_j) \neq (1,1)$. Furthermore, since 
$k_1d_1+k_2d_2 \geq 2$ we have $q^{k_1d_1+k_2d_2}-\al \leq (5/4)q^{k_1d_1+k_2d_2}$.
Thus $\psi(1)/\chi(1) > 81/320$, and so we are done if $\psi(1) \neq \chi(1)$.
Suppose that $\psi(1) = \chi(1)$. Then $k_1=k_2=1$, which forces $\al = \e_1\e_2$,
$(d_1,\e_1)=(2,1)$, and $(d_2,\e_2) = (1,-1)$. In this last case we must have
that $r=3$, 
$$C = \GL_1(4) \times \GU_1(2) \times \Omega^{-\e}_{2n-6}(2),$$
and 
$$\chi(1) = \frac{1}{9} \cdot 
  2^{(n-3)(n-4)}(2^n-\e)(2^{n-3}-\e)(2^{2n-2}-1)(2^{2n-4}-1).$$
In particular, $1 \neq \chi(1)/\St(1) < 4/3$, and so we are done.       
\end{proof}

\section{The Largest Degrees of Simple Groups of Lie Type}
Let $L$ be a finite Lie-type group of simply connected type over $\F_q$.
When $q$ is large enough in comparison to the rank of $L$, Theorem~\ref{bound1}
gives us the precise value of $b(L)$. However, we do not have a formula for $b(L)$ for small values of $q$. In the extreme case $L = \SL_n(2)$, there does not even
seem to exist  a decent upper bound on 
$b(L)$ in the literature, aside from the trivial bound $b(L) < |L|^{1/2}$. 
On the other hand, as a polynomial of $q$, the degree of the Steinberg
character $\St$ is the same as that of the bound in Theorem~\ref{bound1}. So it is an
interesting question to study the asymptotic of the quantity 
$c(L) := b(L)/\St(1)$.
In this section we will prove upper and lower bounds for $c(L)$ for 
finite classical groups.

\begin{thm}   \label{bound4}
 Let $G$ be any of the following Lie-type groups of type $A$: 
 $\GL_n(q)$, $\PGL_n(q)$, $\SL_n(q)$, or $\PSL_n(q)$. 
 Then the following inequalities hold:
 $$\max\left\{1, \frac{1}{4}
           \left(\log_{q}((n-1)(1-\frac{1}{q})+q^2)\right)^{3/4}\right\} 
  \leq \frac{b(G)}{q^{n(n-1)/2}} < 13(\log_q(n(q-1)+q))^{2.54}.$$
 In particular,
 $$\frac{1}{4}\left(\log_{q}\frac{n+7}{2}\right)^{3/4} < \frac{b(G)}{q^{n(n-1)/2}}
   < 13(1+\log_q(n+1))^{2.54}.$$
\end{thm}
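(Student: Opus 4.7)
The plan is to bound $b(G)$ above and below by combining the Lusztig parametrization of irreducible characters with Corollary~\ref{cor:StGL} (Steinberg is the largest unipotent) and the estimates of Lemma~\ref{sum}. First I would reduce all four groups to $\GL_n(q)$: every irreducible of $\SL_n(q)$ appears in the restriction of an irreducible of $\GL_n(q)$, and characters of $\PGL_n(q)$, $\PSL_n(q)$ are quotients of characters of $\GL_n(q)$, $\SL_n(q)$ trivial on the center. Hence the upper bound reduces to bounding $b(\GL_n(q))$, and for the lower bound I need only produce characters that survive passage to the simple quotient, with a loss of at most the constant $|Z(\GL_n(q))|=q-1$ absorbed into the leading constant.

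For the upper bound, every irreducible of $\GL_n(q)$ is labeled $((s),u)$ with $C_G(s)\cong\prod_j\GL_{k_j}(q^{d_j})$ (where $\sum_jd_jk_j=n$) and $u$ a unipotent character of $C_G(s)$; its degree equals $(|G|_{p'}/|C_G(s)|_{p'})\cdot u(1)$. Corollary~\ref{cor:StGL} applied to each tensor factor gives $u(1)\le|C_G(s)|_p$. A direct calculation, in which the $q$-powers cancel thanks to $\sum_jd_jk_j=n$, simplifies the bound to
$$\frac{b(G)}{|G|_p} \;\le\; \max_s \frac{\prod_{i=1}^n(1-q^{-i})}{\prod_j\prod_{i=1}^{k_j}(1-q^{-d_ji})} \;\le\; \max_s\prod_j\prod_{i=1}^{k_j}(1-q^{-d_ji})^{-1}.$$
Applying Lemma~\ref{sum}(i) with $q$ replaced by $q^{d_j}$ yields $\prod_{i=1}^{k_j}(1-q^{-d_ji})\ge\prod_{i=1}^{\infty}(1-q^{-d_ji})>\exp(-\alpha/q^{d_j})$, whence $b(G)/|G|_p\le\exp(\alpha\sum_jq^{-d_j})$. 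The key combinatorial input is that the number of $j$ with $d_j=d$ is at most the number of Galois orbits of size $d$ in $\bar\F_q^\times$, which is at most $(q^d-1)/d$, while $\sum_jd_j\le n$. A greedy optimization (favoring small $d_j$) yields $\sum_jq^{-d_j}\le 1+\ln D$, where $D$ is determined by $q^{D+1}\le n(q-1)+q$. Since $\alpha=2\ln(32/9)<2.54$ and $e^\alpha<13$, this gives $b(G)/|G|_p<13(\log_q(n(q-1)+q))^{2.54}$.

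For the lower bound, the Steinberg character immediately gives $b(G)\ge q^{n(n-1)/2}$, which supplies the constant~$1$. For the refined bound, I would choose a regular semisimple $s$ whose centralizer is a maximal torus $\prod_j\GL_1(q^{d_j})$, with the exponents $d_j$ chosen greedily so as to maximize $\sum_jq^{-d_j}$ subject to $\sum_jd_j=n$ and the same capacity constraints on each $d$. The associated semisimple character $\chi_s$ of $G$ has degree $|G|_{p'}/|C_G(s)|_{p'}$, and the same cancellation used above yields
$$\frac{\chi_s(1)}{|G|_p} \;=\; \frac{\prod_{i=1}^n(1-q^{-i})}{\prod_j(1-q^{-d_j})}.$$
Lemma~\ref{sum}(i) bounds the numerator from below by a positive constant, while $-\log(1-x)\ge x$ translates the greedy lower bound on $\sum_jq^{-d_j}$ into a lower bound on $\prod_j(1-q^{-d_j})^{-1}$. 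Carrying through the estimate with the budget $n-1$ used up on integers of size at most $\log_q((n-1)(1-1/q)+q^2)$ yields the stated $(1/4)(\log_q(\cdot))^{3/4}$. For $\SL_n(q)$ and $\PSL_n(q)$ I would adjust $s$ so that $\det(s)=1$ and verify $\chi_s$ descends modulo the center, losing at most a factor absorbed into the constant $1/4$.

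The main obstacle is the combinatorial bookkeeping in the greedy optimization: one must control the partially-filled top level $D$ and the error in replacing $(q^d-1)/d$ by its asymptotic $q^d/d$ tightly enough for the explicit constants $13$, $2.54$ in the upper bound to come out, and for the lower bound one must verify that a suitable semisimple element with the chosen centralizer actually exists inside $\SL_n(q)$ (or has an image in $\PSL_n(q)$), which requires care that $\chi_s$ is neither Steinberg nor otherwise disqualified. The $3/4$ exponent on the lower bound side (weaker than the $\log_q n$ one could in principle achieve) appears to reflect a deliberately crude closed-form estimate chosen so that the final statement is uniform in $q$ and $n$.
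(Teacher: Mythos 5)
Your upper-bound argument follows the paper's route essentially step by step: reduce to $\GL_n(q)$, invoke Corollary~\ref{cor:StGL} inside each centralizer factor, reduce to bounding $P = \prod_{i=1}^n(1-q^{-i}) / \prod_j\prod_{i=1}^{k_j}(1-q^{-id_j})$, apply Lemma~\ref{sum}(i), and optimize $\sum_d a_d q^{-d}$ greedily against $a_d < q^d/d$ and $\sum_d d a_d\le n$, landing on $e^\alpha < 13$ and $\alpha < 2.54$. That part is sound.

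For the lower bound there is a genuine gap in your descent to $\SL_n(q)$ and $\PSL_n(q)$. You write that $|Z(\GL_n(q))|=q-1$ is ``a constant absorbed into the leading constant,'' and later that the split on restriction to $\SL_n(q)$ costs ``a factor absorbed into the constant $1/4$.'' Neither is true uniformly: the number of irreducible constituents of $\chi_s|_{\SL_n(q)}$ can be as large as $\gcd(n,q-1)$, which is bounded by neither $n$ nor $q$ alone, so naively you would only get $b(\PSL_n(q)) \geq \chi_s(1)/\gcd(n,q-1)$, losing the entire logarithmic gain. The paper prevents any loss by \emph{constructing} $s$ so that $\chi_s$ stays irreducible on $\SL_n(q)$: the greedy exponents are taken starting from $d=3$ (so no eigenvalue of $s_1$ lies in $\F_q$), and $s=\diag(I_{n-m},s_1)$ or $\diag(I_{n-m-1},\det(s_1)^{-1},s_1)$ is arranged so that the eigenvalue $1$ occurs with multiplicity at least $2$ while every other $\F_q$-eigenvalue has multiplicity $1$. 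Consequently $s$ and $st$ are non-conjugate in $\GL_n(q)$ for every $1\neq t\in\F_q^\times$, and one applies \cite[Prop.~13.30]{DM} together with \cite[Lemma~3.2(i)]{KT} to conclude $\chi_s$ is irreducible over $\SL_n(q)$ (and, being trivial on $Z(\GL_n(q))$, also over $\PSL_n(q)$). This Lusztig-series non-conjugacy step is exactly what your proposal omits, and without it the claimed constant $1/4$ does not survive. The restriction to $d\ge 3$ also underlies the estimate $\fp_d \ge 3q^d/(4d)$, which is where the exponent $3/4$ actually comes from, rather than from a ``deliberately crude'' closed form.
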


\begin{proof}  
1) Since the Steinberg character of $\GL_n(q)$ is trivial at $Z(\GL_n(q))$
and stays irreducible as a character of $\PSL_n(q)$, the inequality 
$b(G) \geq q^{n(n-1)/2}$ is obvious. Next we prove the upper bound 
$$c(G) := \frac{b(G)}{q^{n(n-1)/2}} < 13(\log_q(n(q-1)+q))^{2.54}$$
for $G = \GL_n(q)$, which then also implies the same bound for all other
groups of type $A$. It is not hard to see that the arguments in p. 1) of the
proof of Theorem~\ref{clas2} also carry over to
the case of $G = \GL_n(q)$. It follows that $c(G)$ is just 
the maximum of
$$P := \frac{\prod^{n}_{i=1}(1-q^{-i})}
  {\prod^m_{j=1}\prod^{k_j}_{i=1}(1-q^{-id_j})},$$
where the maximum is taken over all possible $m,k_j,d_j \geq 1$ with 
$k_1d_1 \geq \ldots \geq k_md_m$, 
$\sum^{m}_{j=1}k_jd_j = n$, and for each $d = 1,2, \ldots$, 
the number $a_d$ of the values of $j$ such that $d_j$ equals to $d$ does 
not exceed the number $\fp_d$ of monic irreducible polynomials $f(t)$ of 
degree $d$ over $\F_q$; in particular, $a_d < q^d/d$.

By Lemma \ref{sum}(ii), the numerator of $P$ is bounded between 
$9/32$ and $1$ for all $q \geq 2$. It remains to bound (the natural logarithm 
$L$ of) the denominator of $P$.
By Lemma \ref{sum}(i), $\prod^{\infty}_{i=1}(1-q^{-id_j}) > \exp(-\al q^{-d_j})$ with
$\al = 2\ln(32/9)$. Hence,
$$-L/\al < \sum_{j=1}^{m} q^{-d_j} = \sum^{n}_{d=1}a_d q^{-d}.$$
The constraints imply $\sum^n_{d=1}  d a_d \leq n$ and $a_d < q^d/d$.
Replacing the $a_d$ with real numbers $x_d$, we want to
maximize $\sum^{\infty}_{d=1}x_dq^{-d}$  subject to the constraints
$$\sum^{\infty}_{d=1}d x_d \leq n, ~~0 \leq x_d \leq q^d/d.$$
Since the function $q^{-t}/t$ is decreasing on $(0,\infty)$, 
we see that there exists some $d_0$ (depending on $n$) such that the sum is 
optimized when $x_i = q^i/i$ for all $i<d_0$ and $x_i = 0$ for all $i>d_0$.  
Thus $d_0$ is the largest integer such that
$\sum^{d_0-1}_{d=1}(q^d/d)d = \sum^{d_0-1}_{d=1}q^d = (q^{d_0}-q)/(q-1)$ does not 
exceed $n$, whence
$$d_0 \leq \log_q(n(q-1)+q)  < 1+ \log_q(n +1).$$    
On the other hand,
$$\sum^{d_0}_{d=1}x_d q^{-d} \leq \sum_{d=1}^{d_0} \frac{1}{d} < 1 + \ln(d_0).$$
Thus $L > -\al(1+\ln(d_0))$ and so
$$P < e^{-L} < e^{\al(1+\ln(d_0))} = e^{\al}d_{0}^{\al} < 13(\log_q(n(q-1)+q))^{2.54}$$
by the choice $\al = 2\ln(32/9)$.

\medskip
2) Now we prove the lower bound 
$$c(S) := \frac{b(S)}{q^{n(n-1)/2}} > \frac{1}{4}
           \left(\log_{q}((n-1)(1-\frac{1}{q})+q^2)\right)^{3/4}$$
for $S = \PSL_n(q)$, which then also implies the same bound for all other 
groups of type $A$. As above, let $\fp_d$ be the number of monic 
irreducible polynomials $f(t)$ over $\F_q$. Arguing as in p. 1) of the proof 
of Theorem \ref{clas}, we see that the total number of elements
of $\F_{q^d}$ which do not belong to any proper subfield of $\F_{q^d}$ is 
at least $3q^d/4$ when $d \geq 3$ and at most $q^d-1$. It follows that
for $d \geq 3$ we have 
\begin{equation}\label{poly}
  \frac{3q^d}{4d} \leq \fp_d < \frac{q^d}{d}.
\end{equation}

Since $b(S) \geq \St(1)$, the claim is obvious if $n \leq q^3$.
Hence we may assume that $n \geq q^3+1 \geq 3\fp_3+3$. Let $d^* \geq 3$ 
be the largest integer such that $m:= \sum^{d^*}_{d=3}d\fp_d \leq n-3$. 
In particular, 
$$\sum^{d^*+1}_{d=3}q^d > \sum^{d^*+1}_{d=3}d \fp_d \geq n-2,$$ 
and so 
\begin{equation}\label{for-d1}
  d^*+1 \geq \log_q((n-1)(1-1/q)+q^2).
\end{equation} 
Observe that $G_1 := \GL_m(q)$ contains a semisimple element $s_1$ with
$$C_{G_1}(s_1) = \GL_{1}(q^3)^{\fp_3} \times \GL_{1}(q^4)^{\fp_4} \times 
  \ldots \times \GL_{1}(q^{d^*})^{\fp_{d^*}}.$$
(Indeed, each of the $\fp_d$ monic irreducible polynomials of degree $d$ over $\F_q$
gives us an embedding $\GL_1(q^d) \hookrightarrow \GL_d(q)$.) 
If $\det(s_1) = 1$, then choose $s := \diag(I_{n-m},s_1)$ so that 
$$C_G(s) = \GL_{n-m}(q) \times C_{G_1}(s_1).$$
Otherwise we choose $s := \diag(I_{n-m-1},\det(s_1)^{-1},s_1)$ so that 
$$C_G(s) = \GL_{n-m-1}(q) \times \GL_1(q) \times C_{G_1}(s_1).$$
In either case, $\det(s) = 1$ and so $s \in [G,G]$ for $G = \GL_n(q)$.
Now consider the 
(regular) irreducible character $\chi$ labeled by $((s),\St_{C_G(s)})$. 
The inclusion $s \in [G,G]$ implies that $\chi$ is trivial at $Z(G)$.
Also, our choice of $s$ ensures that $s$ has at most two eigenvalues in
$\F_q^{\times}$: the eigenvalue $1$ with multiplicity $\geq n-m-1 \geq 2$, and
at most one more eigenvalue with multiplicity $1$. Hence, for any 
$1 \neq t \in \F_q^{\times}$, $s$ and $st$ are not conjugate in $G$. To
each such $t$ one can associate a linear character $\hat{t} \in \Irr(G)$ 
in such a way that the multiplication by $\hat{t}$ yields a bijection
between the Lusztig series ${\mathcal E}(G,(s))$ and 
${\mathcal E}(G,(st))$ labeled by the 
conjugacy classes of $s$ and $st$, cf. \cite[Proposition 13.30]{DM}. Since 
the distinct Lusztig series are disjoint, we conclude that the number of 
linear characters $\hat{t} \in \Irr(G)$ such that $\chi\hat{t} = \chi$ is
exactly one. It then follows by \cite[Lemma 3.2(i)]{KT} that $\chi$ is 
irreducible over $\SL_n(q)$.
Thus we can view $\chi$ as an irreducible character of $S = \PSL_n(q)$.   

Next, in the case $\det(s_1) = 1$ we have  
$$\frac{\chi(1)}{q^{n(n-1)/2}} = 
  \frac{\prod^{n}_{i=n-m+1}(1-q^{-i})}
  {\prod^{d^*}_{j=3}(1-q^{-j})^{\fp_j}},$$ 
whereas in the case $\det(s_1) \neq 1$ we have that  
$$\frac{\chi(1)}{q^{n(n-1)/2}} =  \frac{\prod^{n}_{i=n-m}(1-q^{-i})}
  {(1-q^{-1})\prod^{d^*}_{j=3}(1-q^{-j})^{\fp_j}}.$$
Since $n-m \geq 3$, in either case we have 
$$\frac{\chi(1)}{q^{n(n-1)/2}} >
  \frac{\prod^{\infty}_{i=4}(1-q^{-i})}
  {\prod^{d^*}_{j=3}(1-q^{-j})^{\fp_j}}.$$  
By Lemma \ref{sum}(ii), the numerator is at least 
$(9/16) \cdot (4/3) \cdot (8/7) = 6/7$. To estimate the 
denominator, observe that $1/(1-x) > e^{x}$ for $0 < x < 1$. 
Applying (\ref{poly}) we now see that
$$\begin{aligned}
   \ln\left(\frac{1}{\prod^{d^*}_{j=3}(1-q^{-j})^{\fp_j}}\right) &> 
   \sum^{d^*}_{j=3}q^{-j}\fp_j \geq  \sum^{d^*}_{j=3}\frac{3}{4j}\\
  &\geq \frac{3}{4}(\ln(d^*+1) - 1 - \frac{1}{2})
   = \frac{3\ln(d^*+1)}{4} - \frac{9}{8}.
\end{aligned}$$ 
Together with (\ref{for-d1}) this implies that 
$$\frac{\chi(1)}{q^{n(n-1)/2}} > 
  \frac{6}{7e^{9/8}}\left(\log_q((n-1)(1-\frac{1}{q})+q^2)\right)^{3/4}\!\!
  > \frac{1}{4}\left(\log_{q}((n-1)(1-\frac{1}{q})+q^2)\right)^{3/4}\!\!.$$
\end{proof}

Next we handle the other classical groups.
Abusing the notation, by \emph{a group of type $C_n$ over $\F_q$} we mean
any of the following groups: $\Sp_{2n}(q)$ (of simply connected type),
$\PCSp_{2n}(q)$ (of adjoint type), or $\PSp_{2n}(q)$ (the simple group, except
for a few ``small'' exceptions). Similarly, by  
\emph{a group of type $B_n$ over $\F_q$} we mean any of the following group: 
$\Spin_{2n+1}(q)$ (of simply connected type), $\SO_{2n+1}(q)$ (of adjoint
type), or $\Omega_{2n+1}(q)$ (the simple group, except
for a few ``small'' exceptions). By \emph{a group of type $D_n$ or 
$^2 D_n$ over $\F_q$} we mean any of the following group: 
$\Spin^{\e}_{2n}(q)$ (of simply connected type),
$\PCO^{\e}_{2n}(q)^{\circ}$ (of adjoint type), 
$\POm^{\e}_{2n}(q)$ (the simple group, except
for a few ``small'' exceptions), $\SO^{\e}_{2n}(q)$, as well as
the half-spin group $\HS_{2n}(q)$. We refer the reader
to \cite{C} for the definition of these finite groups of Lie type.

\begin{thm}   \label{bound5}
 Let $G$ be a group of type $B_n$, $C_n$, $D_n$, or $^2D_n$ over $\F_q$.
If $q$ is odd, then the following inequalities hold:
$$\max\left\{1, \frac{1}{5}
           \left(\log_{q}\frac{4n+25}{3}\right)^{3/8}\right\} 
  \leq \frac{b(G)}{\St(1)} < 38(1+\log_q(2n+1))^{1.27}.$$
If $q$ is even, then the following inequalities hold:
$$\max\left\{1, \frac{1}{5}
           \left(\log_{q}(n+17)\right)^{3/8}\right\} 
  \leq \frac{b(G)}{\St(1)} < 8(1+\log_q(2n+1))^{1.27}.$$
\end{thm}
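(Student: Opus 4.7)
The plan is to adapt the two-part strategy of Theorem \ref{bound4}---an upper bound obtained by enumerating possible centralizers of semisimple elements, and a lower bound obtained by constructing an explicit such element---to the remaining classical types. Arguing as in parts 2) and 3) of the proof of Theorem \ref{clas2} and invoking Theorem \ref{thm:steinberg} factor by factor, I may assume that $b(G)$ is attained at a regular character $\chi$ with Lusztig label $((s),\St_{C_G(s)})$ and degree $\chi(1) = |G:C_G(s)|_{p'}\cdot |C_G(s)|_p$. The centralizer decomposes as $C_G(s) = C_0 \times \prod_{j=1}^r \GL^{\e_j}_{k_j}(q^{d_j})$, where $C_0$ is a classical group of the same type as $G$ but of smaller rank $m$ satisfying $m + \sum_j k_j d_j = n$ (and the evident analogue in types $D,\,{}^2D$), and for each $d$ the number of indices $j$ with $d_j = d$ and given sign $\e_j$ is bounded by the corresponding count of monic irreducible polynomials over $\F_q$, roughly $q^d/d$.

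For the upper bound I would substitute the above into $\chi(1)/\St(1) = |G|_{p'}|C_G(s)|_p/(|C_G(s)|_{p'}|G|_p)$ and apply Lemma \ref{sum}(ii)--(iv) to replace every $\prod_i(1 \pm q^{-i})$ factor by a universal constant times its leading $q$-power. After cancellation, the problem reduces to maximizing a clean product of the shape $\prod_j(1 - q^{-d_j})^{-1}$, subject to $\sum_j k_j d_j \le n$ and the polynomial-counting constraint above. Taking logs and using Lemma \ref{sum}(i) exactly as in Theorem \ref{bound4} converts this into the continuous optimization $\max\sum_d x_d q^{-d}$ with $\sum_d d\,x_d \le 2n$ and $0 \le x_d \le q^d/d$, whose optimum sits at $d_0 \approx \log_q(2n+1)$ and yields a bound of shape $c\cdot(1 + \log_q(2n+1))^{1.27}$. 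The exponent halves from $2.54$ in type $A$ to $1.27$ here because the rank-$n$ constraint now controls only half of the relevant $q$-exponent (the parameter $\gamma = 2$ of Theorem \ref{main2}); the tighter constant $8$ for even $q$ versus $38$ for odd $q$ reflects the exclusion of $\GL$-factors with $(d,\e_j)=(1,+1)$ in characteristic $2$, exactly as in parts 2)--3) of Theorem \ref{clas2}.

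For the lower bound I would choose a semisimple $s \in G$ whose centralizer is of the form $C_0 \times \prod_{d_{\min}\le d\le d^*}\GU_1(q^d)^{\fps_d}$, where $\fps_d$ counts the monic irreducible polynomials of degree $d$ over $\F_q$ whose roots descend to a conjugate pair in the ambient orthogonal or symplectic group, $d_{\min}$ is the smallest admissible $d$ ($d_{\min}=2$ or $3$ depending on the parity of $q$), and $d^*$ is maximal subject to $\sum_d d\,\fps_d \le n$. The estimate \eqref{poly} then gives $d^* + 1 \gtrsim \log_q\bigl((4n+O(1))/3\bigr)$ for odd $q$, and an analogous $\log_q(n+O(1))$ bound for even $q$. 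The corresponding regular character $\chi_{(s)}$ descends to the simple quotient by the same Lusztig-series argument as in the proof of Theorem~\ref{bound4} (using \cite[Proposition~13.30]{DM} and \cite[Lemma~3.2]{KT}), and its degree ratio to $\St(1)$ is estimated by Lemma~\ref{sum} to be at least a constant multiple of $(\log_q(\,\cdot\,))^{3/8}$, the exponent $3/8$ being again half of the $3/4$ obtained for type $A$ in Theorem~\ref{bound4}.

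The hardest step will be the bookkeeping for the ratio $|G|_{p'}|C_G(s)|_p/(|C_G(s)|_{p'}|G|_p)$: unlike the $\GL$-case, both $|G|_{p'}$ and $|C_G(s)|_{p'}$ mix $(q^i - 1)$ and $(q^i + 1)$ factors (from hooks versus cohooks of the parametrizing symbols), and the classical tail $C_0$ contributes yet another such product. Tracking these signs carefully, together with the degenerate-symbol and half-spin complications in types $D$ and $^2D$, while still reducing matters to the clean $\sum_d x_d q^{-d}$ optimization of Theorem~\ref{bound4}, is the technical core of the argument.
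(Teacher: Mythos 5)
Your overall strategy is the same as the paper's: an upper bound by enumerating possible centralizer shapes $\prod_j\GL^{\e_j}_{k_j}(q^{d_j})\times C_0$ subject to a polynomial-counting constraint $a_d\lesssim q^d/(2d)$, and a lower bound by constructing an explicit regular semisimple element with a maximally "fragmented" centralizer $\prod_d \GL_1(q^d)^{\fps_d/2}\times C_0$. The paper too reduces to the optimization $\max\sum_d x_dq^{-d}$ with $\sum_d dx_d\le n$, $x_d\le q^d/(2d)$, and the halving of both exponents ($2.54\to1.27$ and $3/4\to3/8$) comes, as you say, from that extra $1/2$ (each pair $(f,\check f)$ now eats $2d$ dimensions in the ambient space).

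However, there is a genuine gap in your reduction for the upper bound. You write that you ``may assume that $b(G)$ is attained at a regular character $\chi$ with Lusztig label $((s),\St_{C_G(s)})$,'' by arguing as in Theorem~\ref{clas2} and invoking Theorem~\ref{thm:steinberg} factor by factor. But Theorem~\ref{clas2} is special to $q=2$, where $\cG$ is simultaneously simply connected and adjoint and all semisimple centralizers in the dual are \emph{connected}. For $q$ odd in types $B_n$, $D_n$, ${}^2D_n$ this fails: the relevant centralizers (e.g.\ those involving $\GO^{\pm}$ factors) are disconnected, the parametrizing set of unipotent characters of $C_{G^*}(s)$ is larger than in the connected case, and one cannot a priori assert that the maximum degree occurs for $\St$. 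The paper's workaround is explicit: it passes to the identity component $D_1\le C_H(s)$ of index $\kappa\in\{1,2\}$, applies Theorem~\ref{thm:steinberg} to the connected group $D_1$ (so $\varphi_1(1)\le|D_1|_p$), and then pays a factor $2\kappa$ from induction/restriction across the two layers of disconnectedness, concluding only $b(G)\le 2\kappa\cdot(G^*:C)_{p'}|C|_p$ --- \emph{not} that $b(G)$ is itself a regular-character degree. This $2\kappa$ is precisely where much of the constant $38$ comes from (together with an additional factor $2$ from passing between the special Clifford group $\Gamma^+(V)$, whose dual $\CSp$ or $\CO^\circ$ has the clean centralizer description, and the various isogeny types including half-spin). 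Your explanation of $8$ versus $38$ in terms of excluding $(d,\e_j)=(1,1)$ factors in characteristic~$2$ is a side effect, not the driving cause: the driving cause is that in even characteristic the relevant algebraic group is simply connected, so $\kappa=1$ and the disconnectedness penalty disappears. Without the $2\kappa$ bound your numerical constants would not be justified, and as literally stated (``$b(G)$ is attained at a regular character'') the step is false in the disconnected case.

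A smaller issue: for the lower bound you replace a regular character of degree $\rho(1)$ by an irreducible constituent of $\rho|_S$; the paper freely loses a factor $2$ here (since $[G,G]/Z$ has index at most $2$ over $S$) rather than invoking the full $\SL_n$-style machinery of \cite[Proposition 13.30]{DM} and \cite[Lemma 3.2]{KT} as in Theorem~\ref{bound4}, which was designed to avoid losing a factor $\gcd(n,q-1)$. In types $B$, $C$, $D$ that heavy tool is unnecessary. Also, your $\GU_1(q^d)^{\fps_d}$ with $\sum_d d\fps_d\le n$ does not match: each $\GL_1(q^d)$ factor (which is what you get from an $f\ne\check f$ pair) occupies $2d$ dimensions of the natural module, so the constraint is $\sum_d d\cdot(\fps_d/2)\le n$, with $\fps_d/2$ factors per degree; $\GU_1$ would correspond to self-reciprocal $f$, a different count.
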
 

\begin{proof}
In all cases, the bound $b(G)/\St(1) \geq 1$ is obvious since $\St$ 
is irreducible over any of the listed possibilities for $G$. 

\medskip
1) We begin by proving the upper bound in 
the cases where $G$ is of type $B_n$, respectively of type
$D_n$ or $^2D_n$, and $q$ is odd and $n \geq 3$. Let $V = \F_q^{2n+1}$, respectively 
$V = \F_q^{2n}$, be endowed with a non-degenerate quadratic form. Then it is 
convenient to work with the \emph{special Clifford group} 
$G := \Gamma^+(V)$ associated to the quadratic space $V$, see for instance 
\cite{TZ2}. In particular, $G$ maps onto $\SO(V)$ with kernel $C_{q-1}$, and 
contains $\Spin(V)$ as a normal subgroup of index $q-1$. Furthermore,
the dual group $G^*$ can be identified with the conformal symplectic group 
$\CSp_{2n}(q)$ in the $B$-case, and with the group
$\CO(V)^{\circ}$ in the $D$-case, cf. \cite[\S3]{FS}. Observe that  
the adjoint group $\PCO(V)^{\circ}$ contains $\PSO(V)$ 
as a normal subgroup of index $2$. Similarly,
the half-spin group contains a quotient of $\Spin(V)$ (by a central subgroup
of order $2$) as a normal subgroup of index $2$.
Hence, it suffices to prove the indicated upper bound 
(with constant $19$) for this particular $G$. 
Similarly, it will suffice to prove the 
indicated lower bound (with constant $1/5$) for the simple group
$S = \POm(V)$. 

\smallskip
Let $s \in G^*$ be any semisimple element. Consider for instance the $B$-case
and let $\tau(s) \in \F_q^{\times}$ 
denote the factor by which the conformal transformation
$s \in \CSp_{2n}(q)$ changes the corresponding symplectic form. Also set
$H := \Sp_{2n}(q)$ and denote by $\F_q^{\times 2}$ the set of squares
in $\F_q^{\times}$. Then by \cite[Lemma 2.4]{N} we have that 
$$C := C_{G^*}(s) = C_{H}(s) \cdot C_{q-1},$$ with
$$C_{H}(s) = \prod_{i}\GL^{\e_i}_{k_i}(q^{d_i}) \times 
  \left\{ \begin{array}{lll} \Sp_l(q^2), & 
            \tau(s) \notin \F_q^{\times 2}, & (B1)\\ 
    \Sp_{2k}(q) \times \Sp_{2l-2k}(q), & 
            \tau(s) \in \F_q^{\times 2}, & (B2), \end{array}\right.$$
where $\sum_ik_id_i = n-l$, $\e_i = \pm 1$, and $0 \leq k \leq l \leq n$
(and we use $(B1)$ and $(B2)$ to label the two subcases which can arise). 
In the $D$-case, let $\tau(s) \in \F_q^{\times}$ 
denote the factor by which the conformal transformation
$s \in \CO(V)^{\circ}$ changes the corresponding quadratic form; also set
$H := \SO(V)$. Then by \cite[Lemma 2.5]{N} we have that 
$$C := C_{G^*}(s) = C_{H}(s) \cdot C_{q-1},$$ with
$$C_{\GO(V)}(s)\cong \prod_{i}\GL^{\e_i}_{k_i}(q^{d_i}) \times 
  \left\{ \begin{array}{lll} \GO^{\pm}_l(q^2), &  
            \tau(s)\notin \F_q^{\times 2}, & (D1)\\ 
    \GO^{\pm}_{2k}(q) \times \GO^{\pm}_{2l-2k}(q), & 
            \tau(s) \in \F_q^{\times 2}, & (D2),
\end{array}\right.$$
where $\sum_ik_id_i = n-l$, $\e_i = \pm 1$, and $0 \leq k \leq l \leq n$
(and we use $(D1)$ and $(D2)$ to label the two subcases which can arise).

\smallskip
On the set of monic irreducible polynomials of degree $d$ in $\F_q[t]$
(regardless of whether $q$ is odd or not), one
can define the involutive map $f \mapsto \check{f}$ such that $x^df(1/x)$
is a scalar multiple of $f(x)$. One can
show that such an $f$ can satisfy the equality $f = \check{f}$ only when 
$2|d$ and $\al^{q^{d/2}+1} = 1$ for every root $\al$ of $f$. Hence, if 
$\fps_d$ denotes the number of monic irreducible polynomials of degree 
$d$ over $\F_q$ with $f \neq \check{f}$, then
\begin{equation}\label{poly2}
  \fps_d < \frac{q^d}{d}, \mbox{ and }\fps_d \geq \frac{3q^d}{4d} 
  \mbox{ if }d \geq 3 \mbox{ and }q \geq 3, \mbox { or if } d \geq 5
  \mbox{ and }q = 2.
\end{equation}
The former inequality is obvious. The latter inequality follows from 
(\ref{poly2}) when $d$ is odd (as $\fps_d = \fp_d$ in this case), 
and by direct check when $d = 4$ or $(q,d) = (2,6)$. Assume 
$d = 2e \geq 6$ and $(q,d) \neq (2,6)$. Then the number of elements of
$\F_{q^d}$ which belong to a proper subfield of $\F_{q^d}$ or to the subgroup
$C_{q^{d/2}+1}$ of $\F_{q^{d}}^{\times}$ is at most
$$q^e + \sum^e_{i=1}q^i < q^e(q+1) < q^d/4,$$
whence $\fps_d > 3q^d/4$ as stated. 

In either case, decompose the characteristic polynomial of the transformation
$s$ into a product of powers of distinct monic irreducible polynomials over
$\F_q$. Then the factor $GL^{\e_i}_{k_i}(q^{d_i})$ in $C$ with $\e_i = 1$, 
respectively with $\e_i = -1$, corresponds to a factor $(f_i\check{f}_i)^{k_i}$
in this decomposition with $\deg(f_i) = d_i$ and $f_i \neq \check{f}_i$, 
respectively to a factor $f_i^{k_i}$ in this decomposition with 
$\deg(f_i) = 2d_i$ and $f_i = \check{f}_i$. In particular, if $a_d$ denotes the
number of factors $GL_{k_i}(q^d_i)$ in $C$ with $d_i = d$, then 
\begin{equation}\label{for-ad}
   a_d \leq \frac{\fps_d}{2} < \frac{q^d}{2d}, ~~~\sum_dda_d \leq n.
\end{equation}      

\smallskip
Certainly, $\St_C(1) = |C|_{p}$ for the prime $p$ dividing $q$. But, since 
the centralizer of $s$ in the corresponding algebraic group is (most of the time)
disconnected, we cannot apply Theorem \ref{thm:steinberg} directly to say that  
$b(G)$ is attained by the regular character $\chi=\chi_{(s)}$ labeled by 
$((s),\St_C)$. Nevertheless, we claim that the degree of any unipotent character
$\psi$ of $C$ is at most $2\kappa\cdot\St_C(1)$ and so $b(G) \leq 2\kappa\chi(1)$,
where $\kappa = 2$ in the $(D2)$-case and $\kappa = 1$ otherwise. Indeed,
by definition the unipotent character $\psi$ of the (usually disconnected) group
$C$ is an irreducible constituent of $\Ind^{C}_{D}(\varphi)$ for some 
unipotent character $\varphi$ of $D := Z(G^*)C_H(s)$. In turn, $\varphi$ 
restricts irreducibly to a unipotent character of the (usually disconnected)
group $C_H(s)$. It is easy to see that $C_H(s)$ contains a normal subgroup $D_1$ 
of index $\kappa$, which is a finite connected group, in fact a 
direct product of subgroups of form $\GL^{\e_i}_{k_i}(q^{d_i})$, 
$\Sp_l(q^2)$, $\Sp_l(q)$, $\SO^{\pm}_l(q^2)$, or $\SO^{\pm}_l(q)$. 
Again by definition
$\varphi|_{C_H(s)}$ is an irreducible constituent of $\Ind^{C_H(s)}_{D_1}(\varphi_1)$ 
for some unipotent character $\varphi_1$ of $D_1$. Now we can apply
Theorem \ref{thm:steinberg} to $D_1$ to see that 
$\varphi_1(1) \leq \St_{D_1}(1) = |D_1|_p = |C|_p$. Since $|C/D| = 2$ and 
$|C_H(s)/D_1| = \kappa$, we conclude that $\psi(1) \leq 2\kappa|C|_p$, as stated.

Observe that $(G^*:C)_{p'} = (H:D_1)_{p'}/\kappa$. We have therefore shown
that 
$$b(G) = \chi(1) \leq 2(H:D_1)_{p'}\cdot |D_1|_p.$$
By Lemma \ref{sum}(i), (iv),
$\prod^{\infty}_{i=1}(1-q^{-2i}) > 71/81$ since $q \geq 3$, and  
$\prod^{k_j}_{i=1}(1-(\e_jq^{-d_j})^{i}) > 1$ if $\e_j = -1$.
Furthermore, $q^l \pm 1 \geq (2/3)q^l$ and $q^n \pm 1 \leq (28/27)q^n$ since 
$n,q \geq 3$. Using these estimates, we see that
\begin{equation}\label{for-cg1}
  c(G) \leq \frac{A}{\prod_{j~:~\e_j = 1}\prod^{k_j}_{i=1}(1-q^{-id_j})}.
\end{equation}  
Here, $A = 2 \cdot (28/27) \cdot (81/71) \cdot (3/2)^2 = 378/71$ in the 
$(D2)$-case. Similarly, $A = 2$ in the $(B1)$-case, $A = 162/71$ in the
$(B2)$-case, and $A = 28/9$ in the $(D1)$-case. By Lemma~\ref{sum}(i),
$c(G) \leq A\cdot \exp(\al\sum_{d}a_dq^{-d})$ with $\al = 2\ln(32/9)$,
and $a_d$ is subject to the constraints (\ref{for-ad}). Now we can argue as
in p. 1) of the proof of Theorem \ref{bound4} to bound $\sum_da_dq^{-d}$ from
above. In particular, we get $\sum_da_dq^{-d} \leq (1+\ln(d_0))/2$, where 
$d_0$ is the largest integer such that $\sum^{d_0-1}_{d=1}d(q^d/2d) \leq n$,
i.e.
$$d_0 \leq \log_q(2n(q-1)+q) < 1 + \log_q(2n+1).$$
Putting everything together, we obtain
\begin{equation}\label{for-cg2}
  c(G) \leq Ae^{\al/2}d_0^{\al/2} < Ae^{1.27}(1+\log_q(2n+1))^{1.27}
\end{equation}
and so we are done, as $Ae^{1.27} < 19$.

\medskip
2) Next we briefly discuss how one can prove the upper bound in the remaining
cases. 

\smallskip
2a) Consider the case $G$ is of type $C_n$ over $\F_q$ with $q$ odd. As above, it 
suffices to prove the upper bound with the constant $19$ for $G = \Sp_{2n}(q)$.
In this case, $G^* = \SO_{2n+1}(q)$, and if $s \in G^*$ is a semisimple element,
then the structure of $C_{\GO_{2n+1}(q)}(s)$ is as described in the $(D2)$-case.
Arguing as above, we arrive at (\ref{for-cg1}) and (\ref{for-cg2}) with 
$Ae^{1.27} = 2 \cdot (81/71) \cdot (3/2)^2 \cdot e^{1.27} < 18.3$.

\smallskip
2b) Next suppose that $G$ is of type $C_n$ over $\F_q$ with $q$ even. 
In this case, $G^* \cong \Sp_{2n}(q)$, and if $s \in G^*$ is a semisimple element,
then the structure of $C_{G^*}(s)$ is as described in the $(B2)$-case with
$k=0$. Arguing as above, we arrive at (\ref{for-cg1}) and (\ref{for-cg2}) with 
$Ae^{1.27} = e^{1.27} < 3.6$.

\smallskip
2c) Finally, let $G = \Omega^{\e}_{2n}(q)$ with $q$ even and $n \geq 4$; 
in particular, $G^* \cong G$. If $s \in G^*$ is a semisimple element,
then the structure of $C_{\GO^{\e}_{2n}(q)}(s)$ is as described in the $(D2)$-case with
$k=0$. Arguing as above and using the estimates $q^l \pm 1 \geq q^l/2$ and 
$q^n \pm 1 \leq (17/16)q^n$, we arrive at (\ref{for-cg1}) and (\ref{for-cg2}) with 
$Ae^{1.27} = 2 \cdot (17/16) \cdot e^{1.27} < 7.6$ for $q \geq 4$. As in p. 3) of
the proof of Theorem \ref{clas2}, in the case $q=2$ we need some extra care
if $C_G(s)$ contains a factor 
$K_1 := ((C_3 \times C_3) \times \Omega^{\pm}_{2r}(2)) \cdot 2$, 
where $C_3 \times C_3$ is the (unique) subgroup of index $2$ in $\GU_2(2)$. 
We claim that we still have the bound $\theta(1) \leq |K_1|_2$ for any unipotent
character $\theta$ of $K_1$. Indeed, $K_1$ is a normal subgroup of index $2$
of $\tilde{K}_1 := \GU_2(2) \times \GO^{\pm}_{2r}(2)$. Now $\theta$ is an 
irreducible constituent of some unipotent character 
$\tilde{\theta} = \lambda \otimes \mu$ of $\tilde{K}_1$, where 
$\lambda \in \Irr(\GU_2(2))$ and $\mu \in \Irr(\GO^{\pm}_{2r}(2))$ are unipotent. 
It follows that the irreducible constituents of $\theta|_{\Omega^{\pm}_{2r}(2)}$ are
unipotent characters of $H_1 := \Omega^{\pm}_{2r}(2)$ and so have degree at most
$\St_{H_1}(1)$ by Theorem \ref{thm:steinberg}.   
But $C_3 \times C_3$ is abelian, so 
$\theta(1) \leq 2 \cdot \St_{H_1}(1) = |K_1|_2$. 
Now we can proceed as in the case $q \geq 4$.

\medskip
3) Now we proceed to establish the logarithmic lower bound for the simple 
groups $S$ of type $D_n$ or $^2D_n$ over $\F_q$ with $q$ odd and $n \geq 4$.
It is convenient to work instead with $G := \SO^{\e}_{2n}(q)$, since
$G^* \cong G$. Since the lower bound is obvious when $n \leq q^3$, we will
assume that $n > q^3 > 3\fps_3+2$. Let $d^* \geq 3$ be the largest integer
such that $m:= \sum^{d^*}_{d=3}d(\fps_d/2) \leq n-2$. In particular, 
$$\sum^{d^*+1}_{d=3}\frac{q^d}{2} > \sum^{d^*+1}_{d=3}d(\fps_d/2) \geq n-1,$$ 
and so 
\begin{equation}\label{for-d2}
  d^*+1 \geq \log_q((2n-1)(1-1/q)+q^2).
\end{equation} 
Observe that $G_1 := \SO^{+}_{2m}(q)$ contains a semisimple element $s_1$ with
$$C_{G_1}(s_1) = \GL_{1}(q^3)^{\fps_3/2} \times \GL_{1}(q^4)^{\fps_4/2} \times 
  \ldots \times \GL_{1}(q^{d^*})^{\fps_{d^*}/2}.$$
(Indeed, each of the $\fps_d$ monic irreducible polynomials $f$ of degree $d$ over 
$\F_q$ with $f \neq \check{f}$ 
gives us an embedding $\GL_1(q^d) \hookrightarrow \SO^{+}_{2d}(q)$.) 
If $s_1 \in \Omega^+_{2m}(q)$, then choose $s := \diag(I_{2n-2m},s_1)$ so that 
$$C_G(s) = \SO^{\e}_{2n-2m}(q) \times C_{G_1}(s_1).$$
Suppose for the moment that $s_1 \notin \Omega^+_{2m}(q)$. Note that there is some 
$\delta \in \F_{q^2} \setminus (C_{q+1} \cup \F_{q})$ such that 
$h \neq \check{h}$ for the minimal (monic) polynomial $h \in \F_q[t]$ of 
$\delta$ and moreover the $\F_q$-norm of $\delta$ is a non-square in $\F_q^{\times}$.
Hence by \cite[Lemma 2.7.2]{KL}, under the embedding 
$\GL_{1}(q^2) \hookrightarrow \GL_2(q) \hookrightarrow \SO^+_4(q)$,
$\delta$ gives rise to an element $s_2 \in \SO^+_4(q) \setminus \Omega^+_4(q)$.   
Now we choose $s := \diag(I_{2n-2m-4},s_2,s_1)$ so that 
$$C_G(s) = \SO^{\e}_{2n-2m-4}(q) \times \GL_1(q^2) \times C_{G_1}(s_1).$$
Our construction ensures that $s \in [G,G] = \Omega^{\e}_{2n}(q)$.

Next we consider the 
(regular) irreducible character $\rho$ labeled by $((s),\St_{C_G(s)})$. 
The inclusion $s \in [G,G]$ implies that $\rho$ is trivial at $Z(G)$. Since 
$S = \POm^{\e}_{2n}(q)$ is a normal subgroup of index $2$ in 
$G/Z(G)$, we see that $S$ has an irreducible character $\chi$ of degree at least
$\rho(1)/2$. Hence in the case $s_1 \in \Omega^+_{2m}(q)$ we have  
$$\frac{\chi(1)}{\St(1)} \geq \frac{1}{2} \cdot 
  \frac{\prod^{n-1}_{i=n-m}(1-q^{-2i}) \cdot (1-\e q^{-n})}
  {\prod^{d^*}_{j=3}(1-q^{-j})^{\fps_j/2} \cdot (1-\e q^{m-n})},$$ 
whereas in the case $s_1 \notin \Omega^{+}_{2m}(q)$ we have that  
$$\frac{\chi(1)}{\St(1)} \geq \frac{1}{2} \cdot 
  \frac{\prod^{n-1}_{i=n-m-2}(1-q^{-2i}) \cdot (1-\e q^{-n})}
  {\prod^{d^*}_{j=3}(1-q^{-j})^{\fps_j/2} \cdot (1-\e q^{m-n+2}) \cdot(1-q^{-2})}$$
(with the convention that $1-\e q^{m-n+2} = 1$ when $m=n-2$). Observe that 
$(1-\e q^{-n})/(1-\e q^{-k}) > q/(q+1) \geq 3/4$ for $0 \leq k \leq n$. Furthermore,
since $n \geq m+2$ we have 
$$\prod^{n-1}_{i=n-m}(1-q^{-2i}) > \frac{\prod^{n-1}_{i=n-m-2}(1-q^{-2i})}{1-q^{-2}} 
  > \prod^{\infty}_{i=2}(1-q^{-2i}) > \frac{71}{72}$$
by Lemma \ref{sum}(i). Thus  
\begin{equation}\label{for-cs1}
  c(S) \geq \frac{\chi(1)}{\St(1)} >
  \frac{B}{\prod^{d^*}_{j=3}(1-q^{-j})^{\fps_j/2}},
\end{equation}
with $B = (71/72) \cdot (3/4) \cdot (1/2) = 71/192$.  
Applying (\ref{poly2}) we now see that
$$\ln\left(\frac{1}{\prod^{d^*}_{j=3}(1-q^{-j})^{\fps_j/2}}\right) > 
  \sum^{d^*}_{j=3}q^{-j}\frac{\fps_j}{2} \geq  \sum^{d^*}_{j=3}\frac{3}{8j} 
  \geq \frac{3\ln(d^*+1)}{8} - \frac{9}{16}.$$ 
Together with (\ref{for-d2}) this implies that 
\begin{equation}\label{for-cs2}
  c(S) > 
  \frac{B}{e^{9/16}}\left(\log_q((2n-1)(1-\frac{1}{q})+q^2)\right)^{3/8}
  > \frac{B}{e^{9/16}}\left(\log_{q}\frac{4n+25}{3}\right)^{3/8},
\end{equation}
and so we are done, since $Be^{-9/16} > 1/5$.
 
\medskip
4) We will now briefly discuss how one can prove the lower bound in the remaining
cases. Notice that the lower bound is obvious when $n \leq q^6$, so 
we will assume $n > q^6$. 

\smallskip
4a) Consider the case $G$ is of type $C_n$ over $\F_q$ with $q$ odd; in particular,
$G^* = \SO_{2n+1}(q)$. Choose $d^* \geq 3$ largest possible such that
$m := \sum^{d^*}_{d=3}d\fps_d/2 \leq n-2$, and so (\ref{for-d2}) holds. Also
choose $s_1 \in G_1 := \SO_{2m+1}(q)$ a semisimple element with 
$$C_{G_1}(s_1) = \prod^{d^*}_{d=3}\GL_{1}(q^d)^{\fps_d/2}.$$
If $s_1 \in \Omega_{2m+1}(q)$ then we choose $s := \diag(I_{2n-2m+1},s_1)$, and 
if $s_1 \notin \Omega_{2m+1}(q)$ then we choose 
$s:=\diag(I_{2n-2m-3},s_2,s_1)$ where $s_2$ is defined as in 3). As above, 
$s$ gives rise to a regular character $\rho$ of $G$ which is trivial at $Z(G)$,
so $\rho$ can be viewed as an irreducible character of $S := \PSp_{2n}(q)$. The 
same arguments as in 3) now show that (\ref{for-cs1}) holds with
$B = 71/72$, and (\ref{for-cs2}) holds with $Be^{-9/16} > 1/2$.

\smallskip
4b) Assume now that $G = \SO_{2n+1}(q)$ with $q$ odd. Then we choose 
$d^*$ and $m$ as in 4a), and choose $s \in G^* = \Sp_{2n}(q)$ a semisimple 
element with 
$$C_{G}(s) = \Sp_{2n-2m}(q) \times \prod^{d^*}_{d=3}\GL_{1}(q^d)^{\fps_d/2}.$$
Let $\chi$ be an irreducible constituent over $S := \Omega_{2n+1}(q)$ of the 
regular character labeled by $(s)$. The 
same arguments as in 3) now show that (\ref{for-cs1}) holds with
$B = (71/72) \cdot (1/2)$, and (\ref{for-cs2}) holds with $Be^{-9/16} > 1/4$.

\smallskip
4c) Next suppose that $G$ is of type $C_n$ over $\F_q$ with $q$ even; in particular,
$G^* \cong \Sp_{2n}(q)$. Choose $d^* \geq 5$ largest possible such that
$m := \sum^{d^*}_{d=5}d\fps_d/2 \leq n$, and so instead of (\ref{for-d2}) we now have 
\begin{equation}\label{for-d3}
  d^*+1 > \log_q((2n+3)(1-1/q)+q^4).  
\end{equation}
We can find a semisimple element $s \in G^*$ such that
$$C_{G^*}(s) = \Sp_{2n-2m}(q) \times \prod^{d^*}_{d=3}\GL_{1}(q^d)^{\fps_d/2}.$$
By Lemma \ref{sum}(i), $\prod^{\infty}_{i=1}(1-q^{-2i}) > 11/16$. 
Considering the regular character $\rho$ labeled by $(s)$, we now obtain 
\begin{equation}\label{for-cs3}
  c(S) \geq \frac{\rho(1)}{\St(1)} >
  \frac{B}{\prod^{d^*}_{j=5}(1-q^{-j})^{\fps_j/2}},
\end{equation}
with $B = (11/16)$. Applying (\ref{poly2}) and arguing as in 3), we arrive at 
\begin{equation}\label{for-cs4}
  c(S) > 
  \frac{B}{e^{25/32}}\left(\log_q((2n+3)(1-\frac{1}{q})+q^4)\right)^{3/8}
  > \frac{B}{e^{25/32}}\left(\log_{q}(n+17)\right)^{3/8},
\end{equation}
and so we are done, since $Be^{-25/32} > 0.3$. 

\smallskip
4d) Finally, let $G = \Omega^{\e}_{2n}(q)$ with $q$ even and $n \geq 4$; in 
particular, $G^* \cong G$. Now we choose $d^*$ and $m$ as in 4c), and fix a
semisimple element $s \in G^*$ with 
$$C_{G^*}(s) = \Omega^{\e}_{2n-2m}(q) \times \prod^{d^*}_{d=3}\GL_{1}(q^d)^{\fps_d/2}.$$
Using the estimate $(1-\e q^{-n})/(1-\e q^{m-n}) > 2/3$ and arguing as in 4c),
we see that (\ref{for-cs3}) holds with 
$B = (11/16) \cdot (2/3) = 11/24$. Consequently, (\ref{for-cs4}) holds with
$Be^{-25/32} > 1/5$. 
\end{proof}

Following the same approach, A. Schaeffer has proved:

\begin{thm}   \label{bound6} {\rm \cite{Sc}}
 Let $G$ be any of the following twisted Lie-type groups of type $A$: 
 $\GU_n(q)$, $\PGU_n(q)$, $\SU_n(q)$, or $\PSU_n(q)$. 
Then the following inequalities hold:
$$\max\left\{1, \frac{1}{4}
           \left(\log_{q}((n-1)(1-\frac{1}{q^2})+q^4)\right)^{2/5}\right\} 
  \leq \frac{b(G)}{q^{n(n-1)/2}} < 2(\log_q(n(q^2-1)+q^2))^{1.27}.$$
\end{thm}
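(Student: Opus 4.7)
The plan is to mirror the proof of Theorem~\ref{bound4}, exploiting that $\GU_n(q)$ is self-dual as a reductive group, that $\St(1) = q^{n(n-1)/2}$ is the same as in type $A$, and that by Corollary~\ref{cor:StGU} the largest unipotent character of any finite unitary group is its Steinberg character. Hence $b(G)$ is attained either by $\St$ or by a regular semisimple character $\chi_{(s)}$, and it suffices (as in Theorem~\ref{bound4}) to establish the two bounds for $G = \GU_n(q)$, the other three cases following from standard comparisons of degrees through the center or the multiplier. The structural input is that centralizers $C_G(s)$ in the ambient algebraic group decompose as direct products of factors of the form $\GL^{\e_i}_{k_i}(q^{d_i})$, indexed by orbits $\{f, \check f\}$ of the natural involution $f \mapsto \check f$ on monic irreducible polynomials over $\F_q$, with $\e_i = +1$ for pairs $f \neq \check f$ and $\e_i = -1$ for self-dual $f = \check f$; this is exactly the description appearing on the dual side of the $\Sp_{2n}$ case inside the proof of Theorem~\ref{bound5}.

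\medskip

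For the upper bound, I would imitate part~1 of the proof of Theorem~\ref{bound4}. The ratio $c(G) := b(G)/q^{n(n-1)/2}$ equals the maximum of
\[
  P \;=\; \frac{\prod_{i=1}^{n}(1 - (-q)^{-i})}{\prod_{j}\prod_{i=1}^{k_j}(1 - \e_j^{\,i}q^{-id_j})},
\]
taken over admissible centralizer shapes. Lemma~\ref{sum}(i),(iv) bounds the numerator by a universal constant and the reciprocal denominator by $\exp(\alpha \sum_d a_d q^{-d})$ with $\alpha = 2\ln(32/9)$, where $a_d$ counts $\GL$-type factors of polynomial-degree $d$. The novel constraint coming from the pairing is that non-self-dual polynomials occur in pairs, so $a_d \leq \fps_d/2 < q^{2d}/(2d)$, and the rank-counting identity is $\sum_d 2 d\, a_d + (\text{contributions from }\GU\text{-factors}) \leq n$. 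Solving the resulting optimisation exactly as in the $\GL_n$ case, the critical degree is $d_0 \leq \log_q(n(q^2 - 1) + q^2)$, and one obtains $c(G) \leq C_0 \cdot d_0^{\alpha/2}$ with $\alpha/2 = \ln(32/9) < 1.27$, matching the stated upper bound.

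\medskip

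For the lower bound, I would adapt part~2 of that proof. Assuming $n$ is large enough that the asserted bound exceeds $1$ (otherwise $c(G) \geq 1$ already suffices), choose $d^{\ast}$ maximal subject to $m := \sum_{d=3}^{d^{\ast}} 2d\,(\fps_d/2) \leq n - O(1)$, so that $d^{\ast} + 1 \geq \log_q((n-1)(1-q^{-2}) + q^4)$. Construct a semisimple $s \in G$ whose centralizer is $\GU_{n-m}(q) \times \prod_{d=3}^{d^{\ast}} \GL_1(q^{2d})^{\fps_d/2}$, using one representative from each pair $\{f, \check f\}$ of non-self-dual monic irreducibles of degree $d$ over $\F_q$. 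Arrange $s \in [G,G] = \SU_n(q)$ by a determinant adjustment, and invoke \cite[Proposition~13.30]{DM} together with \cite[Lemma~3.2(i)]{KT}, exactly as in the $\PSL_n$ argument, to show that $\chi_s$ descends to an irreducible character of $\PSU_n(q)$. Then Lemma~\ref{sum}(i),(iv) combined with the lower count $\fps_d \geq 3 q^{2d}/(4d)$ for $d \geq 3$ yields the asserted $(1/4)(\log_q((n-1)(1-q^{-2})+q^4))^{2/5}$ estimate.

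\medskip

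The main obstacle is the careful accounting of the involution pairing: each non-self-dual $f \in \F_q[t]$ of degree $d$ contributes only half a $\GL_1(q^{2d})$-factor yet costs $2d$ of rank, and one must verify that the self-dual polynomials (giving $\GU_k(q^d)$-factors with odd $d$) are sufficiently sparse to be absorbed into the optimisation, so that the exponents work out to the stated $1.27$ and $2/5$ (rather than the $2.54$ and $3/4$ of Theorem~\ref{bound4}). A secondary but routine difficulty is arranging the descent of $\chi_s$ from $\GU_n(q)$ through $\SU_n(q)$ to $\PSU_n(q)$, which is handled by the same Lusztig-series and stabiliser arguments already used for $\PSL_n(q)$.
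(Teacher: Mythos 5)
Since the paper itself does not prove Theorem~\ref{bound6} but cites it to A.~Schaeffer's preprint \cite{Sc} with the remark that it follows ``the same approach,'' the only yardstick is the template of Theorems~\ref{bound4} and~\ref{bound5}, which is also what you set out to reproduce. At that level your plan is the right one: use Corollary~\ref{cor:StGU} to reduce the candidate maximal characters to ones of the form $\chi_{(s)}$ with unipotent part $\St_{C_{G^*}(s)}$; decompose $C_{G^*}(s)$ into $\GL$- and $\GU$-blocks indexed by orbits of a polynomial involution; bound the resulting hook-formula ratio using Lemma~\ref{sum}; run the optimisation of part~1 of Theorem~\ref{bound4} for the upper bound and the explicit torus construction of part~2 (with the descent via \cite[Prop.~13.30]{DM} and \cite[Lemma~3.2(i)]{KT}) for the lower bound; and then compare across the isogeny class as in the proof of Theorem~\ref{main2}.

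But the bookkeeping is not internally consistent, and as written would not yield the stated exponents or constants. The involution in the unitary case is not $f\mapsto\check f$ on $\F_q[t]$ as in types $B,C,D$; it is the $q$-twisted duality $f\mapsto\tilde f$ on monic irreducibles in $\F_{q^2}[t]$. Its non-self-dual pairs of $\F_{q^2}$-degree $d$ produce $\GL_k(q^{2d})$-factors of $q$-rank $2dk$, and its self-dual polynomials (odd $d$) produce $\GU_k(q^d)$-factors of $q$-rank $dk$. So ``$\fps_d$'' must be redefined over $\F_{q^2}$, and then $a_d\le\fps_d/2<q^{2d}/(2d)$, which you correctly write --- but you then set up the optimisation with $\exp\bigl(\alpha\sum_d a_dq^{-d}\bigr)$, whereas Lemma~\ref{sum}(i) applied to the block $\GL_k(q^{2d})$ gives the exponent $q^{-2d}$, not $q^{-d}$. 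With your exponent $q^{-d}$ the constrained maximum diverges like $q^{d_0}$ and the bound is useless. Correcting this, the critical degree works out to $d_0\le\tfrac12\log_q(n(q^2-1)+q^2)$, not $\log_q(n(q^2-1)+q^2)$ as you assert, so the factor $(1/2)^{1.27}$ has to be tracked before any claim that the constant matches~$2$.

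The genuine gap is the lower-bound exponent $2/5$. Mimicking part~2 of Theorem~\ref{bound4} with the naive $\F_{q^2}$ analogue of~(\ref{poly}), namely $\fps_d\ge\tfrac34 q^{2d}/d$ for $d\ge3$, gives $\sum_d(\fps_d/2)q^{-2d}\ge\tfrac{3}{8}\ln d^*-O(1)$ and hence only exponent $3/8$, which is strictly weaker than the stated $2/5$. Reaching $2/5$ requires a sharper count, of the form $\fps_d\gtrsim\tfrac45 q^{2d}/d$ for $d\ge4$, obtained by observing that the self-dual polynomials of $\F_{q^2}$-degree $d$ correspond to elements of $C_{q^d+1}$ and contribute only $O(q^d/d)$, while subfield elements contribute $O(q^d)$. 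That numeric verification is exactly what the theorem asserts and cannot be waved away; you flag it as something ``one must verify,'' which is honest, but it means the crux of the argument is missing from your proposal.
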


\begin{proof}[Proof of Theorem \ref{main2}]
The cases where $G$ is an exceptional group of
Lie type follow from the proof of Proposition \ref{exc}. Consider the case 
$G$ is classical. Then the upper bound follows from Theorems \ref{bound4}, 
\ref{bound5}, and \ref{bound6}. We need only to add some explanation 
for the groups of type $A$, twisted  or untwisted. For instance, let $G$ be 
a group of Lie type in the same isogeny class with $L := \SL_n(q)$. Then
$G \cong (L/Z) \cdot C_d$, where $Z$ is a central subgroup of order $d$ of 
$L$, and furthermore the subgroup of all automorphisms of $L/Z$ induced by
conjugations by elements in $G$ is contained in $\PGL_n(q)$. Now consider
any $\chi \in \Irr(G)$. Let $\chi_1$ be an irreducible constituent of 
$\chi|_{L/Z}$ viewed as a characters of $L$ and let $\chi_2$ be an irreducible
constituent of $\Ind^{H}_{L}(\chi_1)$, where $H := \GL_n(q)$. Since the quotients
$G/(L/Z)$ and $H/L$ are cyclic, we see that $\chi(1)/\chi_1(1)$ is the index
(in $G$) of the inertia group of $\chi_1$ in $G$, which is at most the index 
(in $H$) of the inertia group of $\chi_1$ in $H$, and the latter index is
just $\chi_2(1)/\chi_1(1)$. It follows that $\chi(1) \leq \chi_2(1) \leq b(H)$.
The same argument applies to the twisted case of type $A$.

For the lower bound, observe that 
there is some $d_{\varep} \geq 5$ depending on $\varep$ such that
$$\fp_d \geq \fps_d > (1-\varep)\frac{q^d}{d}.$$ 
Choosing $A \leq (d_{\varep})^{(\varep-1)/\gamma}$  we can guarantee that the 
lower bound holds for $n \leq q^{d_{\varep}}$. Hence we may assume that 
$n \geq q^{d_{\varep}}+1 \geq d_{\varep}\fp_{d_{\varep}}+3$. Now we can repeat the proofs
of Theorems \ref{bound4} and \ref{bound5}, replacing the products
$\prod^{d^*}_{d=3}$, respectively $\prod^{d^*}_{d=5}$, by 
$\prod^{d^*}_{d = d_{\varep}}$.
\end{proof} 

\begin{proof}[Proof of Theorem \ref{main3}]
To guarantee the lower bound in the case $\ell = p$ we can take $C \leq 1$,
since the Steinberg, being of $p$-defect~0, is irreducible modulo $p$. Assume
that $\ell \neq p$. As usual, by choosing $C$ small enough we can ignore any
finite number of simple groups; also, it suffices to prove the lower bound
for the unique non-abelian composition factor $S$ of $G$. So we will work
with $S = G/Z(G)$, where $\cG$ is a simple simply connected algebraic group
and $G = \cG^F$ is the corresponding finite group over $\F_q$. Consider
the pair $(\cGD,\FD)$ dual to $(\cG,F)$ and the dual group $G^*:=(\cGD)^{\FD}$.
It is well known that, for $q \geq 5$, ${\mathrm {IBr}}_{\ell}(\PSL_2(q))$ 
contains a character of
degree $\geq q-1$, so we may assume that $r:=\rank(\cG) > 1$. We will show
that, with a finite number of exceptions, $[G^*,G^*]$ contains a regular semisimple 
$\ell'$-element $s$ with connected centralizer and such that $C_{G^*}(s)$ is a 
torus of order $\leq 2q^r$. For such an $s$, the corresponding semisimple character
$\chi = \chi_s$ can be viewed as an irreducible character of $S$ of degree
$|G|_{p'}/|C_{G^*}(s)| > C\cdot|G|_p$ (with $C > 0$ suitably chosen). 
Moreover, any Brauer 
character in the $\ell$-block of $G$ containing $\chi$ has degree divisible by 
$\chi(1)$ as a consequence of a result of Brou\'e--Michel, see
 \cite[Prop. 1]{HM}. Hence the reduction modulo $\ell$ of $\chi$ is 
irreducible and so $b_{\ell}(S) \geq \chi(1)$.    

To find such an $s$, we will work with two specific tori $T_1$ and $T_2$ of $G^*$.
For $G=\tw3D_4(q)$ we can choose $|T_1|=q^4-q^2+1$ and $|T_2|=(q^2-q+1)^2$.
For $G = \SU_n(q)$, we choose 
$$(|T_1|,|T_2|) = \begin{cases}
  \left(\frac{(q^{n/2}+1)^2}{q+1}, q^{n-1}+1\right)&
     \text{if $n \equiv 2\pmod4$},\cr 
  \left(\frac{q^n+1}{q+1},(q^{(n-1)/2}+1)^2\right)&
     \text{if $n \equiv 3\pmod4$}\end{cases}$$
If $G$ is of type $B_n$ or $C_n$ with $2|n$, we can choose
$$(|T_1|,|T_2|) = (q^n+1,(q^{n/2}+1)^2).$$
For all other $G$, $T_1$ and $T_2$ can be chosen of order indicated in 
\cite[Tables~3.5 and~4.2]{Ma2}. 
We may assume that either $q$ or the rank of $G$ is sufficiently large, so in 
particular Zsigmondy primes $r_i$ \cite{Zs} exist for the cyclotomic 
polynomials $\Phi_{m_i}(q)$ of largest possible $m_i$ dividing the orders $|T_i|$. 
Here $i = 1,2$, and, furthermore, for $i = 2$ we need to assume that $G$ is not 
$\SL_3(q)$, $\SU_3(q)$, or $\Sp_4(q) \cong \Spin_5(q)$. According to 
\cite{F}, either $r_2 > m_2+1$ or $r_2^2$ divides $\Phi_{m_2}(q)$, again with 
finitely many exceptions. 

Now if $r_1 \neq \ell$, respectively if $\ell = r_1 \neq r_2$ and 
$r_2$ is larger than all torsion primes of $\cG$
(see e.g. \cite[Table 2.3]{MT} for the list of them), we can choose 
$s \in T_i$ of prime order $r_i$, with $i = 1$, respectively $i=2$, and 
observe that $r_i$ is coprime to all torsion primes of $\cG$ 
as well as to $|G^*/[G^*,G^*]|$. It follows that $C_{\cGD}(s)$ is connected
(cf. \cite[Prop. 14.20]{MT} for instance), 
$s \in [G^*,G^*]$, and moreover $s$ can be chosen so that 
$|C_{G^*}(s)| = |T_i|$. Thus $s$ has the desired properties, and so we are done. 

We observe that $r_2$ can be a torsion prime for $\cG$ only when 
$r_2 = m_2+1$ and $(G,r_2)$ is $(\SL_n(q),n)$, or $(\SU_n(q),n)$ with
$n \equiv 3 \pmod4$. In either case we can choose $s \in T_2 \cap [G^*,G^*]$ of
order $r_2^2$. Furthermore, if $G = \SL^{\e}_3(q)$ with $q \geq 5$, we fix
$\al \in \F_{q^2}^{\times}$ of order $q+\e$ and choose $s \in T_2$ with an inverse
image $\diag(\al,\al^{-1},1)$ in $\cG$. If $G = \Sp_4(q)$ with $q \geq 8$, 
we fix $\beta \in \F_{q^2}^{\times}$ of order 
$q+1$ and choose $s \in T_2$ with an inverse image 
$\diag(\beta,\beta^{-1},\beta^2,\beta^{-2})$ in 
$\Sp_4(\overline{\F}_q) \cong \Spin_5(\overline{\F}_q)$. 
It remains to show that in these cases the element $s$ has the desired
properties. In fact,
it suffices to show that $C_{\cGD}(s)$ is a torus. Consider for instance the
case $G = \SL_n(q)$ (so $r_2 = n$). 
Then $s$ can be chosen to have an inverse image 
$\diag(\gam,\gam^q, \ldots,\gam^{q^{n-2}},1)$ in the 
simply connected group $\hat\cG^*$, where
$|\gam| = n^2$ and $\cGD = \hat\cG^*/Z(\hat\cG^*)$. Suppose $x \in \hat\cG^*$
centralizes $g$ modulo $Z(\hat\cG^*)$. Then 
$xgx^{-1} = \delta g$ for some $\delta \in \overline{\F}_q^{\times}$ with
$\delta^n = 1$. It follows that $\delta$ is an eigenvalue of $g$ of order
dividing $n$, and so $\delta = 1$. Thus $C_{\cGD}(s)$ equals
$C_{\hat\cG^*}(s)/Z(\hat\cG^*)$ and so it is a torus. 
Similar arguments apply to all the remaining cases.
\end{proof}


\end{document}